\numberwithin{equation}{section} 
\newcommand{\R}{\ensuremath{\mathbb{R}}}
\newcommand{\N}{\ensuremath{\mathbb{N}}}
\newcommand{\eps}{\varepsilon}
\newcommand{\cC}{\mathcal{C}}
\newcommand{\cD}{\mathcal{D}}
\newcommand{\bP}{\mathbb{P}}
\newcommand{\mP}{\mathbb{P}}
\newcommand{\Z}{\mathbb{Z}}
\newcommand{\ltn}{\ensuremath{\left| \! \left| \! \left|}}
\newcommand{\rtn}{\ensuremath{\right| \! \right| \! \right|}}
\newtheorem{theorem}{Theorem}[section]
{ \theorembodyfont{\normalfont} 
	
	\newtheorem{remark}[theorem]{Remark}
}
\newtheorem{definition}[theorem]{Definition}
\newtheorem{lemma}[theorem]{Lemma}
\newtheorem{corollary}[theorem]{Corollary}
\newtheorem{proposition}[theorem]{Proposition}
\newcounter{enumctr}
\begin{document}

\title{Pullback attractors for stochastic Young differential delay equations}
\author{
Nguyen Dinh Cong\thanks{Institute of Mathematics, Vietnam Academy of Science and Technology, Vietnam {\it E-mail: ndcong@math.ac.vn}}, $\;$ Luu Hoang Duc\thanks{Max-Planck-Institut f\"ur Mathematik in den Naturwissenschaften, Leipzig, Germany \& Institute of Mathematics, Vietnam Academy of Science and Technology, Vietnam {\it E-mail: duc.luu@mis.mpg.de, lhduc@math.ac.vn}}, $\;$ Phan Thanh Hong \thanks{Thang Long University, Hanoi, Vietnam {\it E-mail: hongpt@thanglong.edu.vn }}\\[2ex]{\it in memory of Russell Johnson}}
\date{}
\maketitle
\begin{abstract}
	We study the asymptotic dynamics of stochastic Young differential delay equations under the regular assumptions on Lipschitz continuity of the coefficient functions. Our main results show that, if there is a linear part in the drift term which has no delay factor and has eigenvalues of negative real parts, then the generated random dynamical system possesses a random pullback attractor provided that the Lipschitz coefficients of the remaining parts are small.
\end{abstract}

{\bf Keywords:}
stochastic differential equations (SDE), Young integral, random dynamical systems, random attractors, exponential stability.

\section{Introduction}
Consider the stochastic differential delay equation of the form
\begin{equation}\label{SDDE}
dy(t) = [Ay(t)+ f(y_t)]dt + g(y_t)dZ(t),\quad y_0=\eta\in \cC^{0,\beta_0}([-r,0],\R^d) \subset \cC_r: = \cC([-r,0],\R^d),
\end{equation}
where $t\in\R^+$, $y_t$ is defined by $y_t:[-r,0]\to\R^d$, $y_t(s)= y(t+s)$ for $s\in [-r,0]$, $A \in \R^{d \times d}$ is a matrix, $r$ is a constant delay, $\cC_r: = \cC([-r,0],\R^d)$ is the space of continuous functions on $[-r,0]$ valued in $\R^d$,  $f$ and $g$ are functions defined on $\cC_r$ valued in $\R^d$ and $\R^{d\times m}$ respectively, and $Z$ is a $\R^m$-valued  stochastic process with stationary increments on a probability space $(\Omega, \mathcal{F},\mathbb{P})$ which has almost sure all the realizations in the H\"older space $\cC^{0,\nu}$ for $\frac{1}{2}< \nu \leq 1$,   the initial condition belongs to the H\"older space $\cC^{0,\beta_0}([-r,0],\R^d)$. Equation \eqref{SDDE} is understood in the path-wise sense using Young integration \cite{young} for the stochastic term $g(y_t)dZ(t)$, whereas the term $[Ay(t)+ f(y_t)]dt$ is defined by the classical Riemann-Stieltjes integration. For the notion of Young integral and its properties, as well as notions and properties of spaces of H\"older continuous functions and H\"older norms  the reader is referred to Section~\ref{appendix} Appendix.

In this paper, we investigate the asymptotic behavior of solution of the delay system \eqref{SDDE} under regular assumptions. Namely,
\begin{itemize}
	\item  ${\bf H_1}$: $A$ has all eigenvalues of negative real parts;
	\item  ${\bf H_2}$: $f$ is globally Lipschitz continuous and thus has linear growth, i.e  there exists constants $C_f$ such that for all $\xi,\eta\in \cC_r$
	\[
	\|f(\xi)-f(\eta)\|\leq C_f\|\xi-\eta\|_{\infty,[-r,0]}; 
	\]
	\item ${\bf H_3}$: $g$ is $C^1$ such that its Frechet derivative is bounded and locally Lipschitz continuous, 
	i.e.\ there exists $C_g$ such that for all $\xi,\eta\in \cC_r$
	\[
	\|Dg(\xi)\|_{L(\cC_r,\R^d)}\leq C_g,
	\]
	and for each $M>0$, there exists $L_M$ such that for all $\xi,\eta\in \cC_r$ satisfying
	\[
	\|\xi\|_{\infty,[-r,0]},\|\eta\|_{\infty,[-r,0]}\leq M
	\] 
	one has
	\begin{equation}\label{Holderiv}
	\|Dg(\xi)- Dg(\eta)\|_{L(\cC_r,\R^d)}\leq L_M\|\xi-\eta\|_{\infty,[-r,0]}.
	\end{equation}
\end{itemize}
Notice that the same question for non-delay Young differential equations is well-studied in \cite{GAKLBSch2010}, \cite{ducGANSch18}, \cite{duchongcong18}, \cite{duchong19}, where one can prove that the system generates a random dynamical system which possesses a random attractor. For the delay system \eqref{SDDE}, the existence and uniqueness of the solution and the generation of a random dynamical system is affirmed in  \cite{BoufoussiHajji17}, \cite{ducetal15} and \cite{duchong18}, but the question on asymptotic stability is still open. 
  
Our aim in this paper is to show that under the assumptions ${\bf H_1,H_2,H_3}$, the system \eqref{SDDE} will generate a random dynamical system by means of its solution flow, and furthermore it possesses a random pullback attractor if the nonlinear term and stochastic term are small. Specifically, Theorem~\ref{attractor} states that if all the eigenvalues of $A$ have negative real parts ($\bf H_1$ holds) then, provided that the Lipschitz coefficients $C_f, C_g$ of the (perturbation) terms $f$ and $g$ are small, the random dynamical system generated by the equation \eqref{SDDE} possesses a random  pullback attractor. Although the result seems natural, its proof is rather technical which employs recently developed methods on semigroups and greedy sequence of stopping times \cite[Definition 4.7]{cassetal}, \cite[Section 2.2]{congduchong17}. In addition, we prove in Theorem~\ref{thm.bound} that, in case $g$ is bounded the assumption on the parameter $C_g$ as well as on the supremum norm of $g$ can be neglected in proving the existence of attractor. Moreover, Theorem~\ref{lin.attractor} asserts that, in case $g$ is linear the attractor is a singleton which is simultaneously a random pullback and random forward attractor. 

This paper is organized as follows. We present in section \ref{solutionsec} a  recurrence formula for the solution of deterministic delayed equation, hence a formula for estimating growth rate of solutions to the equation \eqref{SDDE}. Section~\ref{sec.genRDS} presents the generation of a random dynamical system from the delay equation \eqref{SDDE}. In Section~\ref{sec.main}, we present our main results on existence of a random pullback attractor for the generated random dynamical system. In Section~\ref{appendix}, for convenience of the reader we present some notions and notations used throughout the paper, namely the notions of Young integrals, H\"older spaces, H\"older norms; two versions of Gronwall inequalities---discrete and continuous are also presented.

\section{A recurrence formula for solutions of deterministic delay equation}\label{solutionsec}
In this section we consider the deterministic equation
\begin{equation}\label{YDDE}
dy(t) = [Ay(t)+ f(y_t)]dt + g(y_t)dx(t),\quad y_0=\eta\in \cC^{0,\beta_0}([-r,0],\R^d),
\end{equation}
for some $1-\nu< \beta_0<\nu$, and $x$ belongs to the $\cC^{0,\nu}([0,T],\R^m)$ for all $T>0$. By assumption, almost all realizations of $Z$ belong to $\cC^{0,\nu}$, hence \eqref{YDDE} is a representative path-wise equation of  the stochastic equation \eqref{SDDE}. 

Due to \cite{duchong18}, under the assumptions ${\bf H_1,H_2,H_3}$, the system \eqref{YDDE} has unique solution which belongs to $\cC^{\beta_0}([-r,T],\R^d) \cap \cC^{\beta}([0,T],\R^d)$ for all $T>0$, for all $\beta_0<\beta\leq \nu$. 

From now on, we fix $\beta_0\in (1-\nu,\nu)$, $\beta \in (\beta_0,\nu)$ and put
\begin{eqnarray*}
K&:=&\frac{1}{1-2^{1-(\beta+\nu)}},\\
K_0&:=&\frac{1}{1-2^{1-(\beta_0+\nu)}}
\end{eqnarray*}
(see details of the constants in the appendix). The following proposition is recalled from \cite[Lemmas 17.1, 17.2]{duchong18}.
\begin{proposition}\label{esth1}
	Let $h$ be a Lipschitz continuous function on $\cC_r$ with Lipschitz coefficient $L$ then for each $y\in \cC^{\alpha}([a-r,b],\R^d)$, $0<\alpha\leq 1$, $0\leq a<b$, we have\\
	$(i)$ $\|h(y_{\cdot})\|_{\infty,[a,b]} \leq \|h(0)\| + L\|y\|_{\infty,[a-r,b]}$, here $0$ denotes the zero element of $\cC_r$,\\
	$(ii)$ $\ltn h(y_{\cdot})\rtn_{\alpha,[a,b]} \leq L\ltn y\rtn_{\alpha,[a-r,b]}$.
\end{proposition}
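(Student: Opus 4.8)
The plan is to deduce both bounds from the Lipschitz property of $h$ together with the elementary observation that, for $t\in[a,b]$, the segment $y_t\in\cC_r$ only sees the values of $y$ on $[t-r,t]\subseteq[a-r,b]$, so that its supremum norm and its $\alpha$-H\"older seminorm over $[-r,0]$ are dominated by the corresponding quantities for $y$ on the enlarged interval $[a-r,b]$. Implicit here is that $y\in\cC^{\alpha}([a-r,b],\R^d)$ makes $t\mapsto y_t$ a well-defined continuous $\cC_r$-valued curve on $[a,b]$, so all the expressions below are meaningful.

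For part $(i)$, I would fix $t\in[a,b]$ and write, using that $h$ is Lipschitz with coefficient $L$, $\|h(y_t)\|\leq\|h(y_t)-h(0)\|+\|h(0)\|\leq L\|y_t\|_{\infty,[-r,0]}+\|h(0)\|$, where $0$ denotes the zero element of $\cC_r$. Then $\|y_t\|_{\infty,[-r,0]}=\sup_{s\in[-r,0]}\|y(t+s)\|=\sup_{u\in[t-r,t]}\|y(u)\|\leq\|y\|_{\infty,[a-r,b]}$ because $[t-r,t]\subseteq[a-r,b]$. Taking the supremum over $t\in[a,b]$ yields $(i)$.

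For part $(ii)$, I would fix $a\leq s<t\leq b$. The Lipschitz property gives $\|h(y_t)-h(y_s)\|\leq L\|y_t-y_s\|_{\infty,[-r,0]}$. For each $u\in[-r,0]$ the points $t+u$ and $s+u$ both lie in $[a-r,b]$, so $\|y(t+u)-y(s+u)\|\leq\ltn y\rtn_{\alpha,[a-r,b]}\,|(t+u)-(s+u)|^{\alpha}=\ltn y\rtn_{\alpha,[a-r,b]}\,|t-s|^{\alpha}$; taking the supremum over $u\in[-r,0]$ gives $\|y_t-y_s\|_{\infty,[-r,0]}\leq\ltn y\rtn_{\alpha,[a-r,b]}\,|t-s|^{\alpha}$. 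Combining the two inequalities, dividing by $|t-s|^{\alpha}$, and taking the supremum over all $a\leq s<t\leq b$ produces $(ii)$; in particular this shows $t\mapsto h(y_t)$ is $\alpha$-H\"older on $[a,b]$, so the seminorm on the left-hand side is finite.

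There is no serious obstacle: the argument is a direct unwinding of the definitions of the segment notation $y_t$, the sup-norm, and the H\"older seminorm. The only point needing (minimal) care is the bookkeeping of the shifted intervals — the delay shift by $r$ is exactly compensated by replacing the base interval $[a,b]$ with $[a-r,b]$ on the right-hand sides — and ensuring the inequality chains are uniform in $t$ (resp. in $(s,t)$) before passing to the supremum.
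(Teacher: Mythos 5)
Your proof is correct: both parts follow exactly as you argue, by combining the Lipschitz bound $\|h(y_t)-h(\xi)\|\leq L\|y_t-\xi\|_{\infty,[-r,0]}$ (with $\xi=0$ for $(i)$ and $\xi=y_s$ for $(ii)$) with the observation that the segment $y_t$ only involves values of $y$ on $[t-r,t]\subset[a-r,b]$. The paper itself gives no proof — it recalls the statement from Lemmas 17.1 and 17.2 of the cited reference on Young delay equations — and your direct unwinding of the definitions is the standard argument used there.
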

\medskip

Denote by $\Delta_n$ and $\Delta'_n$ the intervals $[nr,(n+1)r]$ and $[(n-1)r,(n+1)r]$, respectively.  For each $0<\alpha< 1$, we introduce the notation 
\[
\|h\|_{\alpha,[a,b]}:=\|h\|_{\infty,[a,b]}+(b-a)^\alpha\ltn h\rtn_{\alpha,[a,b]} .
\]
It is obvious that $\|\cdot\|_{\alpha,[a,b]}$ and $\|\cdot\|_{\infty,\alpha,[a,b]}$ are equivalent  norms on $\cC^{\alpha}([a,b],\R^d)$.
We also introduce the following notations: 
\begin{itemize}
	\item For real numbers $a_1,\ldots, a_n$ put $a_1\wedge \ldots \wedge a_n := \min \{a_1,\ldots, a_n\}$, and 
	$a_1\vee \ldots \vee a_n := \max \{a_1,\ldots, a_n\}$;
	\item $L_f:=\|A\|+C_f$ with $\|A\|$ being the norm of operator $A$, i.e $\|A\|:=\sup_{\|x\|=1}\frac{\|Ax\|}{\|x\|}$;
	\item $\kappa:=4L_fr +2$.
\end{itemize}

In Proposition~\ref{esty1} below we prove a recurrence formula for the norm of the solution of \eqref{YDDE} by using the continuous Gronwall lemma and the technique of greedy sequences of stopping times like those in \cite{duchong19} with a modification for $\beta$-H\"older norm which is an appropriate norm to deal with the delay system as explained in \cite{duchong18}.

\begin{proposition}\label{esty1}
	The solution $y$ of the equation \eqref{YDDE},
\[
	dy(t) =  [Ay(t)+f(y_t)]dt + g(y_t)dx(t),\quad y_0=\eta\in  \cC^{\beta_0}([-r,0],\R^d),
\]
	satisfies
	\begin{eqnarray}\label{ykk}
	\|y\|_{\beta,\Delta_{n}}
	&\leq&e^{4L_fr+\kappa N_{n}(x)} \left[\|y\|_{\beta,\Delta_{n-1}} +\left(4r\|f(0)\|+\frac{\|g(0)\|}{C_g}\right)\right] - \left(4r\|f(0)\|+\frac{\|g(0)\|}{C_g}\right)
	\end{eqnarray}
	for all $n\geq 1$, and $N_n(x)$ is estimated by
	\begin{eqnarray}\label{Nn}
	N_n(x)\leq 1+[2(K+1)C_gr^\nu]^{\frac{1}{\nu-\beta} } \ltn  x\rtn^{\frac{1}{\nu-\beta}}_{\nu,\Delta_n}  .
	\end{eqnarray}
\end{proposition}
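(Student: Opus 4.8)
The plan is to work on a single delay-interval $\Delta_n=[nr,(n+1)r]$ at a time and run a self-improving (bootstrap) estimate using the greedy-sequence-of-stopping-times device, exactly as in the cited non-delay arguments, but carried out for the $\beta$-Hölder norm $\|\cdot\|_{\beta,\Delta_n}$. First I would write the solution on $\Delta_n$ in integral form,
\[
y(t)=y(nr)+\int_{nr}^t\bigl[Ay(s)+f(y_s)\bigr]\,\dd s+\int_{nr}^t g(y_s)\,\dd x(s),
\]
and estimate the two integrals separately: the Riemann--Stieltjes term is controlled by Proposition~\ref{esth1}(i)--(ii) with $h(\cdot)=Ay(\cdot)+f(y_\cdot)$, Lipschitz coefficient $L_f=\|A\|+C_f$, giving a contribution of order $L_f r$ in both the sup-norm and the $\beta$-Hölder seminorm; the Young integral term is estimated by the standard Young bound (stated in the Appendix), which produces the factor $(K+1)C_g r^\nu\ltn x\rtn_{\nu,\Delta_n}$ multiplying $\|y\|_{\beta}$ on the relevant subinterval, plus a $g(0)$ term.

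The key step is the partition of $\Delta_n$. On a subinterval $[a,b]\subseteq\Delta_n$ of length $b-a$ the Young estimate contributes a factor like $(K+1)C_g(b-a)^\nu\ltn x\rtn_{\nu,[a,b]}$, which is \emph{not} small for fixed lengths; so I would choose a greedy sequence $nr=\tau_0<\tau_1<\dots<\tau_{N_n}=(n+1)r$ of stopping times adapted to $x$ so that on each $[\tau_{i-1},\tau_i]$ one has $(K+1)C_g(\tau_i-\tau_{i-1})^\nu\ltn x\rtn_{\nu,[\tau_{i-1},\tau_i]}\le\frac12$ (this is where the constant $2(K+1)C_gr^\nu$ inside \eqref{Nn} comes from). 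On each such block the Young term can be absorbed into the left-hand side, leaving a clean recurrence $\|y\|_{\beta,[\tau_{i-1},\tau_i]}\le 2\bigl(1+L_fr\bigr)\|y\|_{\beta,[\tau_{i-2}-\text{something},\tau_i]}+\text{const}$; iterating over the $N_n$ blocks and collecting the geometric-type factors yields a bound of the shape $e^{c(1+N_n)}\bigl[\|y\|_{\beta,\Delta_{n-1}}+\text{const}\bigr]-\text{const}$, and matching constants gives precisely \eqref{ykk} with $\kappa=4L_fr+2$ and the affine shift $4r\|f(0)\|+\|g(0)\|/C_g$. The bound \eqref{Nn} on $N_n(x)$ then follows from the standard counting lemma for greedy stopping times: the number of blocks needed to cut $[nr,(n+1)r]$ under the constraint above is at most $1+\bigl[2(K+1)C_gr^\nu\bigr]^{1/(\nu-\beta)}\ltn x\rtn_{\nu,\Delta_n}^{1/(\nu-\beta)}$ because the block-length constraint forces $\tau_i-\tau_{i-1}\gtrsim \bigl((K+1)C_g\ltn x\rtn_{\nu,\Delta_n}\bigr)^{-1/(\nu-\beta)}$ whenever $\tau_i<(n+1)r$.

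I would organize the write-up as: (a) local Young estimate on an arbitrary $[a,b]$ using the Appendix bound and Proposition~\ref{esth1}; (b) definition of the greedy sequence and the per-block contraction; (c) the discrete Gronwall iteration over blocks within $\Delta_n$ to get \eqref{ykk}; (d) the counting argument for \eqref{Nn}. The main obstacle I anticipate is bookkeeping at the \emph{seams} between delay-intervals and between greedy blocks: because $f$ and $g$ are functionals of the segment $y_t$, the estimate on $[a,b]$ genuinely depends on $y$ on $[a-r,b]$, so one must carefully carry the $\|y\|_{\beta,\Delta_{n-1}}$ term through and make sure the Hölder seminorm on a union of blocks is controlled by the seminorms on the pieces (which costs only a bounded combinatorial factor, harmlessly absorbed into $\kappa$). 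Getting the constants to line up exactly as written — in particular that the affine shift is $4r\|f(0)\|+\|g(0)\|/C_g$ and that the exponent is $4L_fr+\kappa N_n$ rather than something merely proportional — will require choosing the per-block contraction threshold and the equivalence constants between $\|\cdot\|_{\beta,[a,b]}$ and $\|\cdot\|_{\infty,\beta,[a,b]}$ with some care, but is otherwise routine once the scheme above is in place.
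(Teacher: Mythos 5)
Your plan follows the paper's own route: a local Young--Loeve estimate on subintervals of $\Delta_n$, a greedy sequence of stopping times chosen so that the Young self-term enters with coefficient $\tfrac12$ and can be absorbed, an iteration across the blocks carrying the norm of $y$ on a sliding window of length $r$ (so that the endpoints of the iteration are exactly $\Delta_{n-1}$ and $\Delta_n$), and a counting argument for $N_n(x)$. However, two of your steps do not go through as stated. First, the ``clean recurrence'' $\|y\|_{\beta,[\tau_{i-1},\tau_i]}\le 2(1+L_fr)\|y\|_{\beta,\cdot}+\mathrm{const}$ is not justified: after absorbing the Young term, the drift still contributes a term of the form $L_f\int_{\tau_{i-1}}^{t}\ltn y\rtn_{\beta,[\tau_{i-1},u]}\,du$ (equivalently, $L_f(\tau_i-\tau_{i-1})$ times a norm of $y$ over $[\tau_{i-1}-r,\tau_i]$, which includes the \emph{current} block), and $L_fr$ is not assumed small, so this self-reference cannot simply be moved across as a multiplicative factor on the previous window. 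The paper resolves it by applying the continuous Gronwall lemma (Lemma~\ref{contgronwall}) on each block to the running seminorm $u\mapsto\ltn y\rtn_{\beta,[t_i,u]}$; this is precisely what produces the factors $e^{4L_f(t_{i+1}-t_i)}$, hence the $4L_fr$ in the exponent and inside $\kappa=4L_fr+2$. Without this step (or a further time-subdivision making $L_f\cdot\mathrm{length}$ small, which would alter $N_n$), the exact form of \eqref{ykk} does not follow.

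Second, your greedy threshold $(K+1)C_g(\tau_i-\tau_{i-1})^{\nu}\ltn x\rtn_{\nu,[\tau_{i-1},\tau_i]}\le\tfrac12$ is inconsistent with the counting bound you claim: with this homogeneity the forced block length is of order $\big(2(K+1)C_g\ltn x\rtn_{\nu,\Delta_n}\big)^{-1/\nu}$, giving $N_n\le 1+r\big[2(K+1)C_g\big]^{1/\nu}\ltn x\rtn^{1/\nu}_{\nu,\Delta_n}$, which is not \eqref{Nn}. To obtain \eqref{Nn} in the stated form one must, as the paper does, define the stopping times through the $(\nu-\beta)$-homogeneous quantity, $t_{i+1}=\sup\{t>t_i:\ (t-t_i)^{\nu-\beta}\ltn x\rtn_{\nu,[t_i,t]}\le\mu\}\wedge(n+1)r$ with $\mu=\frac{1}{2(K+1)C_gr^{\beta}}$ --- this still guarantees the $\tfrac12$ absorption because $(t-t_i)^{\beta}\le r^{\beta}$ --- and then raise the saturated identities to the power $\frac{1}{\nu-\beta}$ and sum, so that the time exponent becomes $1$ and $(N_n(x)-1)\mu^{1/(\nu-\beta)}\le r\ltn x\rtn^{1/(\nu-\beta)}_{\nu,\Delta_n}$, which is exactly \eqref{Nn}. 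Both points are repairable, but they are precisely where the constants $4L_fr$, $\kappa$ and the exponent $\frac{1}{\nu-\beta}$ in the statement come from, so they need to be carried out explicitly.
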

\begin{proof}
	Given an interval $[a,b]$ with $r\leq a\leq b$, notice that $y\in C^\beta([a-r,b],\R^d)$ and 
	\[
	 \|y_u\|_{\infty,[-r,0]}  \leq  \|y_v\|_{\infty,[-r,0]} +(u-v)^\beta\ltn y\rtn_{\beta,[v,u]}, \quad \forall v\leq u.
	\]
	Thus for $a\leq s<t \leq b$, it follows from the Young-Loeve estimate \eqref{yloeve} (see Appendix) and Proposition \ref{esth1} that
	\allowdisplaybreaks
	\begin{eqnarray}\label{ytsr}
	&&\|y(t)-y(s)\| \notag\\
	& =&\left \|\int_s^t  \left[Ay(u)+f(y_u)\right]du + \int_s^t g(y_u) dx(u)\right\|\notag\\
	&\leq& \int_s^t  \left(L_f \|y_u\|_{\infty,[-r,0]} +\|f(0)\|\right)du + \left\|\int_s^t g(y_u) dx(u)\right\|\notag\\ 
	&\leq & \int_s^t \left(L_f\|y_s\|_{\infty,[-r,0]}+ L_f(u-s)^\beta \ltn y\rtn_{\beta,[s,u]}+\|f(0)\|\right)du \notag\\
	&&+ (t-s)^\nu\ltn x\rtn_{\nu,[s,t]} \left[  C_g \|y_s\|_{\infty,[-r,0]} +\|g(0)\|+ KC_g(t-s)^\beta\ltn y\rtn_{\beta,[s-r,t]}  \right]\notag\\
	&\leq & (t-s) \left(L_f\|y_s\|_{\infty,[-r,0]}+\|f(0)\|\right)+L_f \int_s^t(u-s)^\beta \ltn y\rtn_{\beta,[s,u]}du \notag\\
	&&+ (t-s)^\nu\ltn x\rtn_{\nu,[s,t]} \left[  C_g \|y_a\|_{\infty,[-r,0]}+\|g(0)\| + C_g(s-a)^\beta\ltn y\rtn_{\beta,[a,s]}+ KC_g(t-s)^\beta\ltn y\rtn_{\beta,[s-r,t]}  \right].\notag\\
	\end{eqnarray}
	As a result,
	\begin{eqnarray*}
		&&\frac{\|y(t)-y(s)\|}{(t-s)^{\beta}} \notag\\
		&\leq & (t-s)^{1-\beta} \left(L_f\|y_a\|_{\infty,[-r,0]}+L_f(s-a)^\beta\ltn y\rtn_{\beta,[a,s]}+\|f(0)\|\right)+ L_f\int_s^t \ltn y\rtn_{\beta,[s,u]}du \notag\\
		&&+ (t-s)^{\nu-\beta}\ltn x\rtn_{\nu,[s,t]} \left[   C_g \|y_a\|_{\infty,[-r,0]} +\|g(0)\|+ C_g(s-a)^\beta\ltn y\rtn_{\beta,[a,s]}+ KC_g(t-s)^\beta\ltn y\rtn_{\beta,[s-r,t]} \right]\\
		&\leq & L_f \int_a^b \ltn y\rtn_{\beta,[a,u]}du  +L_f(b-a)\ltn y\rtn_{\beta,[a,s]}+(b-a)^{1-\beta} \left(L_f\|y_a\|_{\infty,[-r,0]}+\|f(0)\|\right)  \notag\\
		&&+ (b-a)^{\nu-\beta}\ltn x\rtn_{\nu,[a,b]}\times\\
		&&\times \left[  C_g \|y_a\|_{\infty,[-r,0]} + \|g(0)\| +C_g(b-a)^\beta\ltn y\rtn_{\beta,[a,b]}+ KC_g(b-a)^\beta \left(\ltn y\rtn_{\beta,[a-r,a]}+\ltn y\rtn_{\beta,[a,b]}\right) \right]\\
		&\leq &  L_f\int_a^b \ltn y\rtn_{\beta,[a,u]}du  +L_f\int_a^b\ltn y\rtn_{\beta,[a,s]} du +(b-a)^{1-\beta} \left(L_f\|y_a\|_{\infty,[-r,0]}+\|f(0)\|\right) \notag\\
		&&+ (b-a)^{\nu-\beta}\ltn x\rtn_{\nu,[a,b]} \times\\
		&&\times \left[ C_g \|y_a\|_{\infty,[-r,0]} + \|g(0)\| + KC_g(b-a)^\beta\ltn y\rtn_{\beta,[a-r,a]} +(K+1)C_g  (b-a)^\beta\ltn y\rtn_{\beta,[a,b]}\right]\\
		&\leq & L_f \int_a^b \ltn y\rtn_{\beta,[a,u]}du  +L_f\int_a^b\ltn y\rtn_{\beta,[a,u]} du+(b-a)^{1-\beta} \left(L_f\|y_a\|_{\infty,[-r,0]}+\|f(0)\|\right)  \notag\\
		&&+ (b-a)^{\nu-\beta}\ltn x\rtn_{\nu,[a,b]} \times\\
		&&\times \left[ C_g \|y_a\|_{\infty,[-r,0]} + \|g(0)\| + KC_g(b-a)^\beta\ltn y\rtn_{\beta,[a-r,a]} +(K+1)C_g  (b-a)^\beta\ltn y\rtn_{\beta,[a,b]}\right]\\
		&\leq & 2L_f \int_a^b \ltn y\rtn_{\beta,[a,u]}du  +(b-a)^{1-\beta} \left(L_f\|y_a\|_{\infty,[-r,0]}+\|f(0)\|\right)  \notag\\
		&&+ (b-a)^{\nu-\beta}\ltn x\rtn_{\nu,[a,b]} \times\\
		&&\times \left[ C_g \|y_a\|_{\infty,[-r,0]} + \|g(0)\| + KC_g(b-a)^\beta\ltn y\rtn_{\beta,[a-r,a]} +(K+1)C_g  (b-a)^\beta\ltn y\rtn_{\beta,[a,b]}\right].
	\end{eqnarray*}
	In other words,
	\begin{eqnarray}\label{ybt}
	\ltn y\rtn_{\beta,[a,b]} &\leq&  2 L_f\int_a^b \ltn y\rtn_{\beta,[a,u]}du  +(b-a)^{1-\beta} \left(L_f\|y_a\|_{\infty,[-r,0]}+\|f(0)\|\right)  \notag\\
	&&+ (b-a)^{\nu-\beta}\ltn x\rtn_{\nu,[a,b]} \times\notag\\
	&&\times \left[ C_g \|y_a\|_{\infty,[-r,0]} + \|g(0)\| + KC_g(b-a)^\beta\ltn y\rtn_{\beta,[a-r,a]} +(K+1)C_g  (b-a)^\beta\ltn y\rtn_{\beta,[a,b]}\right]\notag\\
	\end{eqnarray}
	For any $n\geq 1$ fixed, notice that \eqref{ybt} holds for all $[a,b]\subset \Delta_n=[nr,(n+1)r]$. Assign $\mu  := \frac{1}{2(K+1)C_gr^\beta}$ and construct on $\Delta_n$ a greedy sequence of stopping times $t_i$ as follows
	\[
	t_0 = nr,\quad t_{i+1} = \sup \{t > t_i| (t-t_i)^{\nu-\beta}\ltn x\rtn_{\nu,[t_i,t]} \leq \mu\}\wedge [(n+1)r].
	\]
	Since $x \in C^{0,\nu\rm{-Hol}}([0,T],\R^m)$,
	\[
	\Big|\ltn  x\rtn_{\nu,[0,\tau]} - \ltn  x\rtn_{\nu,[0,\tau\pm h]}\Big| \leq \max \Big\{\ltn  x\rtn_{\nu,[\tau,\tau+h]}, \ltn  x\rtn_{\nu,[\tau-h,\tau]} \Big\} 
	\]
	where the right hand side tends to zero as $h \to 0^+$, thus the function $\tau \mapsto \tau^{\nu-\beta}\ltn  x\rtn_{\nu,[0,\tau]}$ is continuous due to the continuity of each component in $\tau$. Hence 
	\begin{eqnarray}\label{stoppingtime}
	(t_{i+1} - t_i)^{\nu-\beta}   \ltn  x\rtn_{\nu,[t_i,t_{i+1}]}& =& \mu,\ \forall  0\leq i\leq N_n(x)-2,\\
	(t_{i+1} - t_i)^{\nu-\beta}   \ltn  x\rtn_{\nu,[t_i,t_{i+1}]}& \leq& \mu,\  {\rm for} \; i= N_n( x)-1,
	\end{eqnarray}
	where
	\[
	N_n(x)=N(\Delta_n, x) := 1 + \max \{i: t_i < (n+1)r\}.
	\]
	We are going to show that for this counting function $N_n(x)$ the inequalities \eqref{ykk} and \eqref{Nn} are satisfied. Indeed, we first prove that $N_n(x) $ is bounded and find an upper bound for it. Choose $m =\frac{1}{\nu-\beta}>1$, one has
	\begin{eqnarray*}
		[N_n(x)-1]\mu^m&=& \sum_{i=0}^{N_n(x)-2} \left[  (t_{i+1}-t_i)^{\nu-\beta} \ltn  x\rtn_{\nu,[t_i,t_{i+1}]}\right]^m\\
		&\leq & \sum_{i=0}^{N_n(x)-2} (t_{i+1}-t_i)^{m(\nu-\beta)} \ltn  x\rtn^m_{\nu,[t_i,t_{i+1}]}\\
		&\leq &  \sum_{i=0}^{N_n(x)-2}(t_{i+1}-t_i) \ltn  x\rtn^m_{\nu,\Delta_n}\\
		&\leq &   r \ltn  x\rtn^{\frac{1}{\nu-\beta}}_{\nu,\Delta_n}< \infty.
	\end{eqnarray*}
	Hence,
	\begin{eqnarray*}
		N_n(x)\leq 1+\frac{r }{\mu^{\frac{1}{\nu-\beta}}}  \ltn  x\rtn^{\frac{1}{\nu-\beta}}_{\nu,\Delta_n} =1+[2(K+1)C_gr^\nu]^{\frac{1}{\nu-\beta} } \ltn  x\rtn^{\frac{1}{\nu-\beta}}_{\nu,\Delta_n}.
	\end{eqnarray*}
	Thus $N_n(x)$ is bounded and the inequality \eqref{Nn} is proved.
	
	By the construction, $t_{i+1}-t_i\leq r$ for $0\leq i\leq N_n(x)-1$, hence for all $[a,b]\subset [t_i,t_{i+1}]$ the inequality \eqref{ybt} leads to
	\begin{eqnarray*}
		\ltn y\rtn_{\beta,[a,b]}
		&\leq & 2 L_f\int_a^b \ltn y\rtn_{\beta,[a,u]}du  +(b-a)^{1-\beta} \left(L_f\|y_a\|_{\infty,[-r,0]}+\|f(0)\|\right)  \notag\\
		&&+ \frac{1}{2(K+1)C_gr^\beta} \left[ C_g \|y_a\|_{\infty,[-r,0]} + \|g(0)\| \right] + \frac{1}{2r^\beta}(b-a)^\beta\ltn y\rtn_{\beta,[a-r,a]} +\frac{1}{2}\ltn y\rtn_{\beta,[a,b]}.\notag\\
	\end{eqnarray*}
	Hence, for all $[a,b]\subset [t_i,t_{i+1}]$,
	\begin{eqnarray*}
		\ltn y\rtn_{\beta,[a,b]}
		&\leq & 4 L_f\int_a^b \ltn y\rtn_{\beta,[a,u]}du  +2(b-a)^{1-\beta} \left(L_f\|y_a\|_{\infty,[-r,0]}+\|f(0)\|\right)  \notag\\
		&&\hspace{2cm}+\; \frac{1}{(K+1)C_gr^\beta} \left(C_g \|y_a\|_{\infty,[-r,0]} + \|g(0)\| \right) + \ltn y\rtn_{\beta,[a-r,a]}. \\
	\end{eqnarray*}
	In particular, for any $t\in[t_i,t_{i+1}]$,
	\begin{eqnarray*}
		\ltn y\rtn_{\beta,[t_i,t]}
		&\leq & 4 L_f\int_{t_i}^t \ltn y\rtn_{\beta,[t_i,u]}du  +2(t-t_i)^{1-\beta} \left(L_f\|y_{t_i}\|_{\infty,[-r,0]}+\|f(0)\|\right)  \notag\\
		&&\hspace{2cm}+ \frac{1}{(K+1)C_gr^\beta} \left( C_g \|y_{t_i}\|_{\infty,[-r,0]} + \|g(0)\| \right) +\ltn y\rtn_{\beta,[t_i-r,t_i]}.
	\end{eqnarray*}
	Using the continuous Gronwall lemma \ref{contgronwall}, we obtain
	\begin{eqnarray*}
		&&\ltn y\rtn_{\beta,[t_i,t]}\leq \\
		&\leq &\left[2(t-t_i)^{1-\beta} \left(L_f\|y_{t_i}\|_{\infty,[-r,0]}+\|f(0)\|\right) + \frac{\left( C_g \|y_{t_i}\|_{\infty,[-r,0]} + \|g(0)\| \right)}{(K+1)C_gr^\beta}  + \ltn y\rtn_{\beta,[t_i-r,t_i]}\right]\\
		&&\times  \left[1+4 L_f\int_{t_i}^{t} e^{4L_f(t-u)}du \right]\\
		&\leq &e^{4L_f(t-t_i)}\left[2(t-t_i)^{1-\beta} \left(L_f\|y_{t_i}\|_{\infty,[-r,0]}+\|f(0)\|\right) + \frac{\left( C_g \|y_{t_i}\|_{\infty,[-r,0]} + \|g(0)\| \right)}{(K+1)C_gr^\beta}  +\ltn y\rtn_{\beta,[t_i-r,t_i]}\right]\\
		&\leq &e^{4L_f(t-t_i)}\left[2L_f(t-t_i)^{1-\beta}\|y_{t_i}\|_{\infty,[-r,0]} + \ltn y\rtn_{\beta,[t_i-r,t_i]}+\frac{ \|y_{t_i}\|_{\infty,[-r,0]} }{(K+1)r^\beta} \right]\\
		&&+e^{4L_f(t-t_i)} \left(2(t-t_i)^{1-\beta}\|f(0)\|+\frac{\|g(0)\|}{(K+1)C_gr^\beta}\right).
	\end{eqnarray*}
	In particular, for $t:=t_{i+1}$,
	\begin{eqnarray*}
		\ltn y\rtn_{\beta,[t_i,t_{i+1}]}
		&\leq &e^{4L_f(t_{i+1}-t_i)}\left[2L_f(t_{i+1}-t_i)^{1-\beta}\|y_{t_i}\|_{\infty,[-r,0]} + \ltn y\rtn_{\beta,[t_i-r,t_i]}+\frac{ \|y_{t_i}\|_{\infty,[-r,0]} }{(K+1)r^\beta} \right]\\
		&&+e^{4L_f(t_{i+1}-t_i)} \left(2(t_{i+1}-t_i)^{1-\beta}\|f(0)\|+\frac{\|g(0)\|}{(K+1)C_gr^\beta}\right).
	\end{eqnarray*}
	Taking into account the equality $\|y_{t_i}\|_{\infty,[-r,0]} + r^\beta \ltn y\rtn_{\beta,[t_i-r,t_i]} = \|y\|_{\beta,[t_i-r,t_i]}$ and due to $t_{i+1}-t_i \leq r$, we obtain
	\begin{eqnarray*}
		&&2r^\beta\ltn y\rtn_{\beta,[t_i,t_{i+1}]}\\
		&\leq &e^{4L_f(t_{i+1}-t_i)}\left[\left(4L_fr^\beta (t_{i+1}-t_i)^{1-\beta} +\frac{2}{K+1}\right)\vee 2   \right]    \|y\|_{\beta,[t_i-r,t_i]}\\
		&&+e^{4L_f(t_{i+1}-t_i)} \left(4r^\beta (t_{i+1}-t_i)^{1-\beta}  \|f(0)\|+\frac{2\|g(0)\|}{(K+1)C_g}\right)\\
		&\leq &e^{4L_f(t_{i+1}-t_i)}   (4L_fr+2 )     \|y\|_{\beta,[t_i-r,t_i]}  +e^{4L_f(t_{i+1}-t_i)} \left(4r\|f(0)\|+\frac{\|g(0)\|}{C_g}\right)\\
		&\leq &e^{4L_f(t_{i+1}-t_i)}\left(e^{4L_fr +2} -1\right)   \|y\|_{\beta,[t_i-r,t_i]}+e^{4L_f(t_{i+1}-t_i)} \left(4r\|f(0)\|+\frac{\|g(0)\|}{C_g}\right)\\
		&\leq &e^{4L_f(t_{i+1}-t_i)}\left(e^{\kappa} -1\right)     \|y\|_{\beta,[t_i-r,t_i]}+e^{4L_f(t_{i+1}-t_i)} \left(4r\|f(0)\|+\frac{\|g(0)\|}{C_g}\right)\\
		&\leq &e^{4L_f(t_{i+1}-t_i)+\kappa}    \|y\|_{\beta,[t_i-r,t_i]}- \|y\|_{\beta,[t_i-r,t_i]}+e^{4L_f(t_{i+1}-t_i)} \left(4r\|f(0)\|+\frac{\|g(0)\|}{C_g}\right).
	\end{eqnarray*}
	Now we evaluate norm of $y$ on $[t_{i+1}-r,t_{i+1}]$ as follows
	\begin{eqnarray}
	\|y\|_{\beta,[t_{i+1}-r,t_{i+1}]}&=& \|y\|_{\infty,[t_{i+1}-r,t_{i+1}]} + r^\beta \ltn y\rtn_{\beta,[t_{i+1}-r,t_{i+1}]}\notag\\
	&\leq &  \|y\|_{\infty,[t_{i}-r,t_i]} + (t_{i+1}-t_i)^\beta \ltn y\rtn_{\beta,[t_i,t_{i+1}]}+r^\beta( \ltn y\rtn_{\beta,[t_{i}-r,t_{i}]}+  \ltn y\rtn_{\beta,[t_{i},t_{i+1}]})\notag\\
	&\leq& \|y\|_{\beta,[t_i-r,t_i]} +2r^\beta  \ltn y\rtn_{\beta,[t_{i},t_{i+1}]}\notag\\
	&\leq&e^{4L_f(t_{i+1}-t_i)+\kappa} \|y\|_{\beta,[t_i-r,t_i]} +e^{4L_f(t_{i+1}-t_i)} \left(4r\|f(0)\|+\frac{\|g(0)\|}{C_g}\right),\notag
	\end{eqnarray}
	where, to derive the second inequality, we used the estimate 
	\[
	\| y\|_{\infty,[t_i,t_{i+1}]} \leq  \| y(t_i)\| +(t_{i+1}-t_i)^\beta\ltn y\rtn_{\beta,[t_{i},t_{i+1}]} \leq  \| y\|_{\infty,[t_{i}-r,t_{i}]} +(t_{i+1}-t_i)^\beta\ltn y\rtn_{\beta,[t_{i},t_{i+1}]}.
	\]
	By induction we obtain, for any $i=0,\dots, N_n(x)-1$,
	\begin{eqnarray*}
		&&\|y\|_{\beta,[t_{i+1}-r,t_{i+1}]}\notag\\
		&\leq&e^{4L_f(t_{i+1}-t_0)+(i+1)\kappa} \|y\|_{\beta,[t_0-r,t_0]} +\left(4r\|f(0)\|+\frac{\|g(0)\|}{C_g}\right) \sum_{j=0}^ie^{4L_f(t_{i+1}-t_j)+(i-j)\kappa} \notag\\
		&\leq&e^{4L_f(t_{i+1}-t_0)+(i+1)\kappa}  \left[\|y\|_{\beta,[t_0-r,t_0]} +\left(4r\|f(0)\|+\frac{\|g(0)\|}{C_g}\right) \right] - \left(4r\|f(0)\|+\frac{\|g(0)\|}{C_g}\right) .
	\end{eqnarray*}
	Replacing $i= N_n(x)-1$ with note that $[t_0-r,t_0] = \Delta_{n-1}$ , $[t_{N_n(x)}-r,t_{N_n(x)}] = \Delta_n$, we obtain
	\begin{eqnarray*}
		\|y\|_{\beta,\Delta_n}
		&\leq&e^{4L_fr+\kappa N_{n}(x)} \left[\|y\|_{\beta,\Delta_{n-1}} +\left(4r\|f(0)\|+\frac{\|g(0)\|}{C_g}\right)\right] - \left(4r\|f(0)\|+\frac{\|g(0)\|}{C_g}\right),
	\end{eqnarray*}
which proves \eqref{ykk}.
\end{proof}
\begin{remark}\label{specialk1}
	Notice that while the solution of \eqref{YDDE} belongs to $\cC^\beta$ on $[0,T]$, it only belongs to $\cC^{\beta_0}$ but not necessarily belongs to  $\cC^\beta$ on $[-r,0]$. Therefore we have to make separate estimations for solutions of \eqref{YDDE} on the first interval $[0,r]$. By a slight modification of the proof of Proposition~\ref{esty1} we obtain the following estimates.
	\begin{enumerate}
		\item It is evident that if we replace $\beta$ by $\beta_0$ and $K$ by $K_0$, then \eqref{ykk} and \eqref{Nn} hold for all $n\geq 0$. In particular, letting $n=0$ we have an estimate in the $\|\cdot\|_{\beta_0,[0,r]}$ norm for the solution of \eqref{YDDE} on $[0,r]$  as follows  
		\begin{eqnarray}\label{ykk0}
		\hspace{-1cm}	\|y\|_{\beta_0,[0,r]} \leq  e^{4L_fr+\kappa N_{0}(x)} \left[\|y\|_{\beta_0,[-r,0]} +\left(4r\|f(0)\|+\frac{\|g(0)\|}{C_g}\right)\right] 
		- \left(4r\|f(0)\|+\frac{\|g(0)\|}{C_g}\right),
		\end{eqnarray}
		where 
		\begin{eqnarray}\label{Nk0}
		N_0(x) \leq  1+[2(K_0+1)C_gr^\nu]^{\frac{1}{\nu-\beta_0} } \ltn  x\rtn^{\frac{1}{\nu-\beta_0}}_{\nu,[0,r]}.   
		\end{eqnarray}
		\item Similar to \eqref{ytsr}, for  $0\leq s<t\leq r$ we have
		\begin{eqnarray*}
			\|y(t)-y(s)\| 
			& =&\left \|\int_s^t  \left[Ay(u)+f(y_u)\right]du + \int_s^t g(y_u) dx(u)\right\|\notag\\
			&\leq& \int_s^t  \left(L_f \|y_u\|_{\infty,[-r,0]} +\|f(0)\|\right)du + \left\|\int_s^t g(y_u) dx(u)\right\|\notag\\
			&\leq& (t-s) \left(L_f\|y\|_{\infty,[-r,r]}+\|f(0)\|\right)\\
			&&+ (t-s)^\nu\ltn x\rtn_{\nu,[s,t]} \left[  C_g \|y\|_{\infty,[-r,r]} +\|g(0)\|+ K_0C_gr^{\beta_0}\ltn y\rtn_{\beta_0,[-r,r]}  \right]\\
			&\leq & D \left [(t-s)  + (t-s)^\nu\ltn x\rtn_{\nu,[s,t]} \right] \Big[1+\|y\|_{\infty,[-r,r]} + r^{\beta_0}\ltn y\rtn_{\beta_0,[-r,r]} \Big] \\
			&\leq & D(t-s)^\beta \left (r^{1-\beta}  + r^{\nu-\beta}\ltn x\rtn_{\nu,[0,r]} \right) \Big(1+\|y\|_{\beta_0,[-r,0]} + \| y\|_{\beta_0,[0,r]} \Big)
		\end{eqnarray*}
		for some positive constants $D$. Combining this with \eqref{ykk0} and changing the constant $D$ to a bigger one if necessary, we obtain the following  estimate in the $\|\cdot\|_{\beta,[0,r]}$ norm for the solution of \eqref{YDDE} on $[0,r]$  
		\begin{eqnarray}\label{ybeta0r}
		\| y\|_{\beta,[0,r]} 
		&\leq & D \left(1  +\ltn x\rtn_{\nu,[0,r]} \right) \Big[1+\|y\|_{\beta_0,[-r,0]} + \| y\|_{\beta_0,[0,r]} \Big] \notag\\
		&\leq & D \left(1  +\ltn x\rtn_{\nu,[0,r]} \right) \Big(1+\|y\|_{\beta_0,[-r,0]} \Big) e^{\kappa N_0(x)} \notag\\
		&\leq&D\left(1  +\ltn x\rtn_{\nu,[0,r]} \right) \Big(1+\|y\|_{\beta_0,[-r,0]} \Big)  e^{D \ltn  x\rtn^{\frac{1}{\nu-\beta_0}}_{\nu,[0,r]} }.
		\end{eqnarray}
	\end{enumerate}
\end{remark}

\section{Generation of random dynamical systems}\label{sec.genRDS}

In this section, we present the generation of random dynamical systems for equation \eqref{SDDE} under general noise $Z$, a stochastic process with stationary increments with almost sure all the realizations in $\cC^{0,\nu}$. Namely, similar to \eqref{SDDE} but for simplicity of presentation we consider the equation
\begin{equation}\label{SDDE2}
dy(t) = F(y_t)dt+g(y_t)dZ(t),\;\; y_0=\eta\in \cC^{0,\beta_0}([-r,0],\R^d),
\end{equation} 
where $\beta_0>1-\nu$ is an arbitrary fixed constant. 
Note that \eqref{SDDE} is a special case of \eqref{SDDE2} with the coefficient $F(y_t)$ changed to $Ay(t)+f(y_t)$. The initial condition is considered in the separable space $\cC^{0,\beta_0}([-r,0],\R^d)$, the condition $\beta_0>1-\nu$ is needed to assure existence and uniqueness of solution to \eqref{SDDE2} in the $\cC^{0,\beta_0}$ space (see \cite{duchong18}).
First, we recall the definition of {\it random dynamical system} (RDS). Let $(\Omega,\mathcal{F},\mP)$ be a probability space equipped with a so-called {\it metric dynamical system} $\theta$, which is a measurable mapping $\theta: \R \times \Omega \to \Omega$ such that $\theta_t:\Omega\to\Omega$ is $\mP-$ preserving, i.e.\ $\mP(B) = \mP(\theta^{-1}_t(B))$ for all $B\in \mathcal{F}, t\in \R$, and $\theta_{t+s} = \theta_t \circ \theta_s$ for all $t,s \in \R$.  Let $S$ be a Polish space, a  {\it continuous random dynamical system} 
\[
\varphi: \R \times \Omega \times S \to S,\; (t,\omega, y_0)\mapsto \varphi(t,\omega,y_0)
\]
 is then defined as a measurable mapping which is also continuous in $t$ and $y_0$ such that the cocycle property
\begin{eqnarray}\label{cocycle}
\varphi(t+s,\omega) &=&\varphi(t,\theta_s \omega) \circ \varphi(s,\omega),\quad \forall t,s \in \R, {\textcolor{black} \forall \omega\in \Omega}\\
\varphi(0,\omega,\cdot)&=& Id_S
\end{eqnarray}
is satisfied (see Arnold~\cite{arnold}).
  
To study the existence of the random pullback attractor of the system \eqref{SDDE} in the next section, we need to construct a canonical space for $Z$ which is equipped by a metric dynamical system $\theta$. In the following, we follow \cite[Theorem\ 5]{BRSch17}  to state a similar result for stochastic process valued in $\cC^{\alpha}$ for some $\alpha\in (0,1]$. 
Recall that $\cC^{0,\alpha}([a,b],\R^m)$ is the closure of smooth path from $[a,b]$ to $\R^m$ in $\alpha$-H\"older norm. It is known that $\cC^{0,\alpha}([a,b],\R^m)$ is a separable Banach space, see \cite{friz}. Denote by $\cC^{0,\alpha}(\R,\R^m)$ the space of all $x: \R\to \R^m$ such that $x|_I \in \cC^{0,\alpha}(I, \R^m)$ for each compact interval $I\subset\R$, equipped  with the compact open topology given by the $\alpha$-H\"older norm, i.e.\  topology generated by the metric:
\[
d(x,y): = \sum_{n\geq 1} \frac{1}{2^n} (\|x-y\|_{\infty,\alpha,[-n,n]}\wedge 1).
\]
Then $\cC^{0,\alpha}(\R,\R^m)$ is a separable metric space. Denote by $\cC^{0,\alpha}_0(\R,\R^m)$ the subspace of $\cC^{0,\alpha}(\R,\R^m)$ containing paths which vanish at $0$.
It is evident that for $x\in \cC^{0,\alpha}_0(\R,\R^m)$
\[
 \ltn x\rtn_{\alpha,[-n,n]}\leq \|x\|_{\infty,\alpha,[-n,n]} \leq (1+n^\alpha) \ltn x\rtn_{\alpha,[-n,n]}
\]
for all $n$, and  $\cC^{0,\alpha}_0(\R,\R^m)$ is closed in $\cC^{0,\alpha}(\R,\R^m)$.
The following Theorem is due to \cite{BRSch17}. 
\begin{theorem} 
Assume that we  have a process $\bar{X}$ defined on a probability space $(\bar{\Omega},\bar{\mathcal{F}},\bar{\bP})$ and  valued in $(\cC^{0,\alpha}_0(\R,\R^m), \mathcal{B})$ with $\mathcal{B}$ being Borel $\sigma$-algebra. Assume further that $\bar{X}$ has stationary increment. Then there exist a metric dynamical system $(\Omega,\mathcal{F},\bP, (\theta_t)_{t\in \R})$ and a process $X: (\Omega,\mathcal{F},\bP) \to \cC^{0,\alpha}_0(\R,\R^m)$ which has the same law as $\bar{X}$ and satisfies the property:
$$X(t+s,\omega) = X(s,\omega) + X(t,\theta_s\omega),\quad \forall  \omega \in \Omega,\quad t,s\in \R.$$
\end{theorem}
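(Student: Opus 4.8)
The plan is to realize everything canonically on path space. I would set $\Omega := \cC^{0,\alpha}_0(\R,\R^m)$, $\mathcal{F} := \mathcal{B}$ the Borel $\sigma$-algebra for the metric $d$ introduced above, and let $\bP$ be the law of $\bar X$, i.e.\ the push-forward of $\bar\bP$ under $\bar X : \bar\Omega \to \cC^{0,\alpha}_0(\R,\R^m)$ (this is a well-defined Borel probability measure since $\bar X$ is $\cC^{0,\alpha}_0(\R,\R^m)$-valued). I would then take $X(t,\omega) := \omega(t)$, the coordinate (evaluation) process; since evaluation $(t,\omega)\mapsto\omega(t)$ is jointly continuous from $\R\times\Omega$ into $\R^m$ (because $|\omega(t)-\omega'(t)|\le\|\omega-\omega'\|_{\infty,[-n,n]}$ for $n\ge|t|$, which is controlled by $d$), $X$ is measurable, and by construction it has the same law as $\bar X$. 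The shift should be $(\theta_s\omega)(t) := \omega(t+s)-\omega(s)$. First I would check this is a well-defined map $\Omega\to\Omega$: $\theta_s\omega$ vanishes at $0$, and on each compact interval $t\mapsto\omega(t+s)$ again lies in the little-H\"older space (translation maps $\cC^{0,\alpha}$ into itself), while subtracting the constant $\omega(s)$ does not change the $\alpha$-H\"older seminorm, so $\theta_s\omega\in\cC^{0,\alpha}_0(\R,\R^m)$. The relations $\theta_0=\mathrm{Id}$ and $\theta_{t+s}=\theta_t\circ\theta_s$ follow from a direct one-line computation, as does the asserted identity $X(t+s,\omega)=\omega(t+s)=\omega(s)+(\theta_s\omega)(t)=X(s,\omega)+X(t,\theta_s\omega)$.

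The two substantive points are the measurability of $\theta$ as a map $\R\times\Omega\to\Omega$ and the $\bP$-invariance of each $\theta_s$. For measurability I would show that $(s,\omega)\mapsto\theta_s\omega$ is in fact jointly continuous with respect to $d$; writing, for $s_n\to s$ and $\omega_n\to\omega$, $\theta_{s_n}\omega_n-\theta_s\omega=\theta_{s_n}(\omega_n-\omega)+(\theta_{s_n}\omega-\theta_s\omega)$, the first term is controlled on $[-n,n]$ by $\|\omega_n-\omega\|_{\infty,\alpha,[-n-M,n+M]}$ for $|s_n|\le M$ and hence vanishes, while the second term vanishes because the translation group is \emph{strongly} continuous in the $\alpha$-H\"older norm on compacts on the separable space $\cC^{0,\alpha}$ (this is precisely where the passage to the closure of smooth paths $\cC^{0,\alpha}_0$ rather than the full H\"older space is essential, since strong continuity of translation fails on the latter). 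Joint continuity then yields $\mathcal{B}(\R)\otimes\mathcal{F}/\mathcal{F}$-measurability and, in particular, measurability of each $\theta_s$.

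For the invariance I would invoke the stationary-increment hypothesis on $\bar X$ directly: for fixed $s$, the $\cC^{0,\alpha}_0(\R,\R^m)$-valued random variable $\bar\omega\mapsto\big(t\mapsto\bar X(t+s,\bar\omega)-\bar X(s,\bar\omega)\big)$ is exactly $\theta_s\circ\bar X$, and by stationarity of the increments it has the same law as $\bar X$; hence $\bP(\theta_s^{-1}B)=\bar\bP(\theta_s\circ\bar X\in B)=\bar\bP(\bar X\in B)=\bP(B)$ for every $B\in\mathcal{F}$, so $\theta_s$ is $\bP$-preserving. Combining these, $(\Omega,\mathcal{F},\bP,(\theta_t)_{t\in\R})$ is a metric dynamical system and $X$ is the required process with $X(t+s,\omega)=X(s,\omega)+X(t,\theta_s\omega)$ for all $\omega$. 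I expect the main obstacle to be the careful topological step — establishing strong continuity of the translation flow on the little-H\"older space $\cC^{0,\alpha}_0$ and that it transfers to joint continuity, hence measurability, of $\theta$ on $\cC^{0,\alpha}_0(\R,\R^m)$ equipped with the compact-open H\"older topology; once that is in place, the invariance (extending, if one prefers, from a generating $\pi$-system to all of $\mathcal{F}$) and the cocycle identities for $\theta$ and $X$ are routine.
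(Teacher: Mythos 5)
Your construction is exactly the paper's: the canonical space $\cC^{0,\alpha}_0(\R,\R^m)$ with the law of $\bar X$ as $\bP$, the increment shift $(\theta_s\omega)(t)=\omega(t+s)-\omega(s)$, and the coordinate process $X(\omega)(t)=\omega(t)$; the paper simply delegates the verification of measurability, invariance and the cocycle identity to the argument of Bailleul--Riedel--Scheutzow, whereas you spell those details out (strong continuity of translation on the little H\"older space, invariance from stationary increments). The proposal is correct and follows essentially the same route as the paper.
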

\begin{proof}
We denote by $(\Omega, \mathcal{F})$ the space  $(\cC^{0,\alpha}_0(\R,\R^d),\mathcal{B})$  and by $\bP$ the distribution of $\bar{X}$ on $\Omega$. On  $(\Omega,\mathcal{F},\bP)$ we set 
$$
\theta:\R\times \Omega  \to \Omega,\quad \theta(t,\omega)(s) = \theta_t\omega(s) := \omega(t+s) - \omega(t),
$$ 
and define the process $X$: $X(\omega) (t)= \omega(t)$ for all $\omega\in \Omega$. The properties of $(\Omega,\mathcal{F},\bP)$ and $X$ are obtained by arguments similar to that of \cite[Theorem~5, p.~8]{BRSch17}.
\end{proof}

\medskip
Now we consider the systems \eqref{SDDE} and  \eqref{SDDE2} with $Z$ defined on the canonical space constructed as above. Moreover, we assume that $(\Omega,\mathcal{F},\bP,\theta)$ is ergodic and 
\begin{equation}\label{Gamma}
\Gamma(\beta) : = \Big(E \ltn Z \rtn^{\frac{1}{\nu-\beta}}_{\beta,[-r,r]}\Big)^{\nu-\beta} < \infty.
\end{equation}
Next, we are going to study the generation of random dynamical system from the system \eqref{SDDE2}. Note that the Young integral satisfies the shift property with respect to $\theta$ (see for instance \cite{congduchong18}), i.e.
\[
\int_a^b x(u)d\omega(u) = \int_{a-r}^{b-r} x(u+r) d\theta_r \omega(u).  
\]
and due to \cite{duchong18} 
 the equation \eqref{SDDE2} possesses a unique solution $y(t,x,\eta)$ in $\cC^{0,\beta_0}([-r,\infty),\R^d)$. Moreover, the solution is continuous w.r.t $\eta$ and belongs to $\cC^{\beta}([0,\infty),\R^d)$ for $\beta_0<\beta<\nu$. The following conclusion is followed from \cite{ducetal15}.
\begin{theorem}\label{gen.rds}
	Under assumption ${\bf {(H_2), (H_3)}}$ the system \eqref{SDDE2} generates a random dynamical system defined by
	\[
	\varphi: \R^+\times\Omega\times \cC^{0,\beta_0}([-r,0],\R^d) \rightarrow \cC^{0,\beta_0}([-r,0],\R^d),\quad \varphi(t,x,\eta) (s): =y(t+s,x,\eta).
	\]
	Moreover, $\varphi$ is continuous.
\end{theorem}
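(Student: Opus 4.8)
The plan is to verify the three defining properties of a continuous random dynamical system---measurability, the cocycle property, and joint continuity in $(t,\eta)$---for the candidate map $\varphi(t,\omega,\eta)(s):=y(t+s,\omega,\eta)$, where $y(\cdot,\omega,\eta)$ denotes the unique solution of \eqref{SDDE2} on $[-r,\infty)$ with initial segment $\eta$ and driving path $Z(\cdot,\omega)$. Existence, uniqueness, and membership of $y$ in $\cC^{0,\beta_0}([-r,\infty),\R^d)\cap\cC^{\beta}([0,\infty),\R^d)$ are already available from \cite{duchong18}, and continuous dependence on $\eta$ is quoted there as well; so the core of the argument is to turn these path-wise facts into the random-dynamical-system structure.

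First I would record that $\varphi(0,\omega,\cdot)=\mathrm{Id}$, which is immediate since $y(s,\omega,\eta)=\eta(s)$ for $s\in[-r,0]$. Next comes the cocycle property, which is where the shift property of the Young integral is essential. Fix $s,t\ge 0$ and $\omega\in\Omega$. The function $u\mapsto y(u+s,\omega,\eta)$, restricted to $[-r,\infty)$ and reparametrised, solves \eqref{SDDE2} driven by the shifted path $Z(\cdot+s,\omega)-Z(s,\omega)=Z(\cdot,\theta_s\omega)$ with initial segment $\varphi(s,\omega,\eta)$; this uses precisely the identity $\int_a^b x(u)\,d\omega(u)=\int_{a-s}^{b-s}x(u+s)\,d\theta_s\omega(u)$ stated above (with $r$ replaced by $s$) to rewrite the stochastic integral, together with the obvious change of variables in the Riemann--Stieltjes drift term. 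By uniqueness of solutions, this reparametrised function must coincide with $y(\cdot,\theta_s\omega,\varphi(s,\omega,\eta))$; evaluating at time $t$ (shifted by $v\in[-r,0]$) gives $\varphi(t+s,\omega,\eta)=\varphi(t,\theta_s\omega,\varphi(s,\omega,\eta))$, which is \eqref{cocycle}.

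Then I would address joint continuity. Continuity in $\eta$ (uniformly on bounded $t$-intervals) is inherited from the quoted continuous dependence result; continuity in $t$ follows because $y(\cdot,\omega,\eta)\in\cC^{\beta_0}([-r,\infty),\R^d)$, so $t\mapsto y_{t}(\omega,\eta)$ is continuous into $\cC_r$, and in fact into $\cC^{0,\beta_0}([-r,0],\R^d)$ after a routine interpolation between the $\cC^{\beta}$-regularity on $[0,\infty)$ and the $\cC^{\beta_0}$-regularity near $0$ (the estimate \eqref{ybeta0r} of Remark~\ref{specialk1}, applied shifted to windows of length $r$, controls the relevant H\"older seminorms locally in terms of $\ltn Z\rtn_{\nu}$ on the corresponding window, which is finite for every $\omega$). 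Combining these with a standard $\varepsilon/2$ argument yields joint continuity in $(t,\eta)$.

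Finally, measurability: one needs $(t,\omega,\eta)\mapsto\varphi(t,\omega,\eta)$ to be $\mathcal{B}(\R^+)\otimes\mathcal{F}\otimes\mathcal{B}(\cC^{0,\beta_0})$-measurable. Since we have already established continuity in $(t,\eta)$, it suffices to show measurability in $\omega$ for fixed $(t,\eta)$ and then invoke the Carath\'eodory-type lemma that a map which is measurable in one variable and continuous in the others is jointly measurable. Measurability in $\omega$ follows from the construction of the solution as a limit (locally uniform in time, hence in the relevant H\"older topologies) of Picard-type or fixed-point iterates, each of which is an $\omega$-measurable functional of the path $Z(\cdot,\omega)$ because the Young integral depends continuously---hence measurably---on its integrand and integrator; this is the content of \cite{ducetal15}, from which the theorem is quoted. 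I expect the main obstacle to be bookkeeping rather than conceptual: one must be careful that the shift identity for the Young integral is applied over the correct intervals (including the delayed window $[-r,0]$) and that the continuity statements are phrased in the $\cC^{0,\beta_0}$-norm on the phase space rather than merely the sup-norm, since the RDS lives on $\cC^{0,\beta_0}([-r,0],\R^d)$; the interpolation controlling the $\beta_0$-H\"older seminorm of the time-$t$ segment is the one genuinely technical point.
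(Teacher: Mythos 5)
Your outline reconstructs exactly the argument that the paper itself delegates to its references: the paper states this theorem without proof, citing \cite{duchong18} for existence, uniqueness and continuous dependence on $\eta$, and \cite{ducetal15} for the generation of the RDS, after recording the shift property of the Young integral --- and these are precisely the ingredients you use (shift property plus path-wise uniqueness for the cocycle identity, continuous dependence for continuity in $\eta$, a Carath\'eodory-type argument for joint measurability). In that sense the route is the same, and the cocycle and measurability parts of your proposal are fine.

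The one step that would not work as written is your treatment of continuity in $t$ in the $\cC^{0,\beta_0}$-norm of the phase space. For $t\in[0,r)$ the segment $y_t$ still contains a piece of the initial path $\eta$ on part of $[-r,0]$, where the solution has no regularity beyond that of $\eta$ itself; there is nothing to interpolate against there, and the estimate \eqref{ybeta0r} of Remark~\ref{specialk1} only bounds H\"older norms on $[0,r]$ --- it says nothing about the modulus of continuity of the translation $t\mapsto \eta(\cdot+t)$ in the $\beta_0$-H\"older seminorm. Indeed, for a general $\eta\in\cC^{\beta_0}$ the map $t\mapsto y_t$ is \emph{not} continuous in the $\beta_0$-H\"older norm (translations are not strongly continuous on the big H\"older space). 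What saves the statement is precisely that the phase space is the little H\"older space $\cC^{0,\beta_0}([-r,0],\R^d)$: approximating $\eta$ by smooth paths in the $\beta_0$-H\"older norm gives strong continuity of translations there, and this, combined with the interpolation argument you describe (which is legitimate for $t\geq r$, where the segment lies in $[0,\infty)$ and the solution is $\cC^\beta$ with $\beta>\beta_0$), yields the claimed continuity in $t$. So the conclusion is correct, but the mechanism for the initial window should be the density of smooth paths in $\cC^{0,\beta_0}$ rather than interpolation; this is exactly the reason the authors take $\cC^{0,\beta_0}$, and not $\cC^{\beta_0}$, as the state space.
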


\begin{corollary}
Under assumption ${\bf {(H_1), (H_2), (H_3)}}$ the stochastic delay equation \eqref{SDDE} generates a continuous random dynamical system with the phase space $\cC^{0,\beta_0}([-r,0],\R^d)$.
	\end{corollary}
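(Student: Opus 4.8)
The plan is to observe that \eqref{SDDE} is nothing but the instance of the general equation \eqref{SDDE2} obtained by taking the drift $F(\xi) := A\xi(0) + f(\xi)$ for $\xi\in\cC_r$, and then to invoke Theorem~\ref{gen.rds}. For this it suffices to verify that this particular $F$ inherits the global Lipschitz property $(\mathbf{H_2})$ that is required of the drift in \eqref{SDDE2}; the diffusion coefficient $g$ is left unchanged and already satisfies $(\mathbf{H_3})$.

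First I would record that the point evaluation $\cC_r\to\R^d$, $\xi\mapsto\xi(0)$, is linear with operator norm at most $1$ with respect to $\|\cdot\|_{\infty,[-r,0]}$, so that $\xi\mapsto A\xi(0)$ is linear and bounded with $\|A\xi(0)-A\eta(0)\|\leq \|A\|\,\|\xi-\eta\|_{\infty,[-r,0]}$. Combining this with $(\mathbf{H_2})$ via the triangle inequality yields, for all $\xi,\eta\in\cC_r$,
\[
\|F(\xi)-F(\eta)\| \leq (\|A\|+C_f)\|\xi-\eta\|_{\infty,[-r,0]} = L_f\|\xi-\eta\|_{\infty,[-r,0]},
\]
so $F$ is globally Lipschitz continuous on $\cC_r$ and in particular has linear growth, exactly as demanded of the drift term of \eqref{SDDE2}. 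Since $\cC^{0,\beta_0}([-r,0],\R^d)\subset\cC_r$ with a bounded embedding, the same estimate holds for the restriction of $F$ to the phase space. Applying Theorem~\ref{gen.rds} with $Z$ realized on the canonical metric dynamical system $(\Omega,\mathcal{F},\bP,\theta)$ constructed in Section~\ref{sec.genRDS}, one concludes that \eqref{SDDE} possesses a unique solution $y(t,x,\eta)$ in $\cC^{0,\beta_0}([-r,\infty),\R^d)$, continuous in $\eta$, and that $\varphi(t,x,\eta)(s):=y(t+s,x,\eta)$ defines a continuous random dynamical system on $\cC^{0,\beta_0}([-r,0],\R^d)$.

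There is essentially no obstacle here: the corollary is a direct specialization of Theorem~\ref{gen.rds}, and the only point needing a line of argument is the boundedness of the point evaluation at $0$ on $\cC_r$, which is what makes the composite drift $A\xi(0)+f(\xi)$ again globally Lipschitz in the supremum norm. I would also remark that hypothesis $(\mathbf{H_1})$ is not actually used for the generation statement (it enters only in Section~\ref{sec.main} for the existence of the attractor); it appears in the statement merely because the corollary is phrased under the paper's standing assumptions.
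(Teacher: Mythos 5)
Your proposal is correct and follows the same route the paper intends: the paper itself notes that \eqref{SDDE} is the special case of \eqref{SDDE2} with $F(y_t)=Ay(t)+f(y_t)$ and treats the corollary as an immediate consequence of Theorem~\ref{gen.rds}, and your only added content --- checking that $\xi\mapsto A\xi(0)+f(\xi)$ is globally Lipschitz with constant $L_f=\|A\|+C_f$ via the boundedness of evaluation at $0$ --- is precisely the (implicit) verification the paper relies on, and your remark that $\mathbf{H_1}$ is not needed for generation is also accurate.
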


\section{Random pullback attractors}\label{sec.main}
This section is devoted to the main result of our paper. We will show that under some natural conditions the random dynamical system generated by the stochastic Young differential delay equation \eqref{SDDE} possesses a random pullback attractor. Note that for the classical theory of dynamical systems one usually studies forward attractor, but in the framework of the theory of random dynamical systems the notion of random pullback attractor seems more appropriate (see e.g.\ \cite{crauelkloeden} and the references therein). The relation between concepts of attractors is studied in \cite{crauel}, \cite{crauelscheutzow}, \cite{flandolibjorn}, \cite{chebanetal}. Particularly in relation to the nonautonomous setting with compact topological parameter space, there is a work by \cite{Johnson2009} which proves that the (nonautonomous) pullback attractor of nonautonomous dynamical systems (in terms of skew product flows) coincides with their so-called  Lyapunov attractors.

First we recall the classical notion of random pullback attractors for a general random dynamical system.
Let $S$ be a Polish space, i.e.\ a separable topological space whose topology is metrizable with a complete metric $d$. Denote by $\mathcal{B}$ the Borel-$\sigma$ algebra on $S$. For each $y \in S$, $E\subset S$, we define $d(y,E)=\inf\{d(y,z)|\ z\in E\}$. The Hausdorff distance between two nonempty subsets $E,F$ of $S$ is defined by  $d(E|F) = \sup\{\inf\{d(y, z)|z \in F\} | y \in E\}$. Recall that a set $\hat{M} =\{M(x)\}_{x \in \Omega}$ is called a {\it random set} if it belongs to $\mathcal{F}\times\mathcal{B}$. In the case that $M(x)$ is closed or compact  for each  $x\in\Omega$, that the mapping $x\mapsto d(y,M(x))$ is measurable for every $y\in S$ ensures the measurability of $M$. $M$ is then said to be closed or compact random set.
Given a random dynamical system $\varphi$ on $(\Omega,\mathcal{F},\bP)$, valued on $S$. 
 We recall the following definition from  \cite{flandolibjorn, crauelscheutzow}.
\begin{definition}\label{dfn.attractor}
Suppose that $\varphi$ is a RDS on a Polish space $S$ and $\cD$ is a non-empty family of subsets of $\Omega\times S$. Then a set $\mathcal{A}\subset \Omega\times S$ is a {\em random pullback attractor for $\cD$} if\\
$(i)$ $\mathcal{A}$ is a compact random set,\\
$(ii)$ $\mathcal{A}$ is strictly $\varphi-$invariant, i.e.\ $\varphi(t,x)\mathcal{A}(x) =\mathcal{A}(\theta_t x)$ $\bP$-almost surely for every $t\geq 0$,\\
$(iii)$ $\mathcal{A}$ attracts $\cD$ in the pullback sense, i.e for every $\hat{D} \in \cD$,
\begin{equation}\label{pullback}
\lim \limits_{t \to \infty} d(\varphi(t,\theta_{-t}x)
\hat{D}(\theta_{-t}x)| \mathcal{A} (x)) = 0,\;\;\; \bP-a.s.
\end{equation}
\end{definition}

It is known that, under the assumption on the continuity with respect to the state variable of the random dynamical system,  the existence of the random pullback attractor follows from the existence of the random pullback absorbing set (see \cite[Theorem 3.5]{flandolibjorn}, \cite[Theorem 2.4]{GAKLBSch2010}), i.e.\ a  compact random set $B$ such that $\bP-$almost all $x$, for each $\hat{D}\in \cD$  there
exists a time $t_0(x, \hat{D})$ such that for all $t>t_0(x,\hat{D})$,
\[
\varphi(t,\theta_{-t}x)\hat{D}(\theta_{-t}x)\subset B(x).
\]
An {\it universe} $\cD$ is a family of random sets
which is closed w.r.t.\ inclusions (i.e. if $\hat{D}_1 \in \cD$ and
$\hat{D}_2 \subset \hat{D}_1$ then $\hat{D}_2 \in \cD$). Given a universe $\cD$ and a compact random
pullback absorbing set $\mathcal{B} \in \cD$, there exists a unique random pullback attractor
 in $\cD$, given by
\begin{equation}\label{at}
\mathcal{A}(x) = \cap_{s \geq 0} \overline{\cup_{t\geq s} \varphi(t,\theta_{-t}x)\mathcal{B}(\theta_{-t}x)}. 
\end{equation}
 
In our setting, the problem of generation of  random dynamical system by a stochastic Young differential delay equation is treated in Section~\ref{sec.genRDS} and our equation \eqref{SDDE} generated a continuous random dynamical system  with the phase space being the function space $\cC^{0,\beta_0}([-r,0],\R^d)$.
We define the universe $\cD$ to be a family of {\it tempered} random sets $D(x)$ which  is contained in a ball $B(0,\rho(x))$ a.s., where the radius $\rho(x)$ is a tempered random variable (see Appendix).

Here we notice that while the definition of pullback attractor is formulated for a general universe $\cD$, or even for the case $\cD$ being an abstract collection of subsets of the product of the phase space and the probability space of the random dynamical system 
(see \cite[Definition 9]{crauelscheutzow}, \cite[Definition 15]{crauelkloeden}), in practical concrete problems one needs to impose additional conditions on the growth rate of the size of the random sets $\hat{D}(\cdot)$. Thus one may consider the universe of tempered compact random sets (see \cite[Theorem 5.10]{flandolibjorn}, \cite[Theorem 2.4]{GAKLBSch2010}), or the universe of deterministic bounded sets (see  \cite[Definition 1.3]{scheutzow2002}). In this paper
we follow \cite{flandolibjorn,GAKLBSch2010} in imposing temperedness condition on the universe $\cD$ as above.

Now, to understand the dynamics of the random dynamical system generated by the stochastic Young differential delay equation \eqref{SDDE} we need to study the path-wise deterministic equation of \eqref{SDDE}. Let us look back at the system \eqref{YDDE}
\[
dy(t) = [Ay(t)+ f(y_t)]dt + g(y_t)dx(t), y_0=\eta\in \cC^{0,\beta_0}([-r,0],\R^d),
\]
with the assumptions $\bf {H_1, H_2,H_3}$. Put $\Phi(t) :=e^{At}$. By the variation of constants formula, the solution $y(t)$ of  \eqref{YDDE} satisfies 
 \begin{equation}\label{yvar}
 y(t)= \Phi(t-t_0)y(t_0) + \int_{t_0}^t\Phi(t-s)f(y_s)ds+\int_{t_0}^t\Phi(t-s)g(y_s)dx(s),\quad \forall t\geq t_0\geq 0.
 \end{equation}
 Indeed, put $z(t) = \Phi^{-1}(t)y(t)$ then
 \begin{eqnarray*}
 dz(t)& =& d\Phi^{-1}(t) y(t) + \Phi^{-1}(t)dy(t)\notag\\
 &=& -\Phi^{-1}(t)d\Phi(t)\Phi^{-1}(t)y(t) + \Phi^{-1}(t) \Big[ (Ay(t) + f(y_t))dt + g(y_t)dx(t)\Big] \notag\\
 &=&-\Phi^{-1}(t) Ay(t) dt + \Phi^{-1}(t) \Big[ (Ay(t) + f(y_t))dt + g(y_t)dx(t)\Big] \notag\\
 &=& \Phi^{-1}(t) \Big[f(y_t)dt + g(y_t)dx(t)\Big], \notag
 \end{eqnarray*}
 hence
 $y(t) =\Phi(t) z(t) = 
 \Phi(t) \Big(z(t_0)+ \int_{t_0}^t\Phi^{-1}(s)f(y_s)ds + \int_{t_0}^t\Phi^{-1}(s)g(y_s)dx(s)\Big),$
 from which  \eqref{yvar} follows.   
 
 By the assumption $\bf {H_1}$ on $A$, there exist  positive constants $C_A,\lambda  >0$  (see \cite[Chapter 1, \S3]{Adrianova})  such that 
	\begin{eqnarray}
	\|\Phi\|_{\infty,[a,b]} &\leq& C_Ae^{-\lambda  a}, \label{estphi1}\\
\ltn \Phi\rtn_{\alpha,[a,b]} &\leq& \|A\|C_Ae^{-\lambda  a} (b-a)^{1-\alpha}, \quad \forall\;  0< a<b, \label{estphi2}
\end{eqnarray}
where $0<\alpha\leq 1$ is arbitrary and fixed, and $\|A\|$ is the norm of operator $A$.
 
 We introduce the following notations
\begin{eqnarray}
\lambda_0&:=&\lambda-L, \quad\hbox{where}\quad L:=C_AC_fe^{\lambda r},\label{notationL}\\
M_1&:=&KC_Ae^{4\lambda r} r^\nu (1+\|A\|r).\label{notationM1}
\end{eqnarray}
From now on, we will assume that 
\begin{eqnarray}\label{lambda0_positive}
\lambda_0=\lambda-L >0.
\end{eqnarray}
The following lemma gives us an estimate for the uniform norm of solutions to the deterministic Young equation \eqref{YDDE}.
\begin{lemma}\label{lem.yinfty}
Let $n\in\N$, $n\geq 1$, and $t\in \Delta_n$ be arbitrary.  Then, there exists a positive constant $M_2$ independent of $t$ and $n$ such that
the solution of system \eqref{YDDE} satisfies
\begin{eqnarray}\label{yinfty}
\|y\|_{\infty,[t-r,t]}
 &\leq&M_2 e^{-\lambda_0 nr}  \|y\|_{\infty,[0,r]} + (\|f(0)\|\vee \|g(0)\|) M_2\sum_{k=0}^{n-1} (1+ \ltn x\rtn_{\nu,\Delta_{k+1}})e^{-\lambda_0 (n-k)r} \notag\\
&&\hspace{2cm}+\; C_gM_1 \sum_{k=0}^{n-1} \ltn x\rtn_{\nu,\Delta_{k+1}}e^{-\lambda_0 (n-k)r}\left( \|y\|_{\beta,\Delta_{k}}+ \|y\|_{\beta,\Delta_{k+1}}\right),
	\end{eqnarray}
where $M_1$ is defined by the formula \eqref{notationM1}. In particular, 
\begin{eqnarray}\label{yinftyn}
		\|y\|_{\infty,\Delta_n} &\leq& M_2 e^{-\lambda_0 nr}  \|y\|_{\infty,[0,r]} + M_2(\|f(0)\|\vee \|g(0)\|) \sum_{k=0}^{n-1} (1+ \ltn x\rtn_{\nu,\Delta_{k+1}})e^{-\lambda_0 (n-k)r} \notag\\
&&+\; C_gM_1 \sum_{k=0}^{n-1} \ltn x\rtn_{\nu,\Delta_{k+1}}e^{-\lambda_0 (n-k)r}\left( \|y\|_{\beta,\Delta_{k}}+ \|y\|_{\beta,\Delta_{k+1}}\right).
	\end{eqnarray}
\end{lemma}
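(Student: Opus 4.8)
The plan is to start from the variation-of-constants formula \eqref{yvar} with $t_0$ taken to be a multiple of $r$, and to iterate it over consecutive delay windows $\Delta_k$, using the exponential bounds \eqref{estphi1}--\eqref{estphi2} on $\Phi$ together with Proposition~\ref{esth1} to handle $f(y_\cdot)$ and $g(y_\cdot)$, and the Young--Loeve estimate \eqref{yloeve} to handle the stochastic integral. First I would fix $t\in\Delta_n$ and apply \eqref{yvar} on the interval $[(k-1)r,\,t']$ for $t'$ in a window; more precisely, for the uniform norm on $[t-r,t]$ I would write $y(t') = \Phi(t'-kr)y(kr) + \int_{kr}^{t'}\Phi(t'-s)f(y_s)\,ds + \int_{kr}^{t'}\Phi(t'-s)g(y_s)\,dx(s)$ and estimate each term. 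The first term contributes $C_A e^{-\lambda(t'-kr)}\|y(kr)\|$; the drift term contributes, via $\|\Phi\|_\infty\le C_A e^{-\lambda(\cdot)}$ and $\|f(y_s)\|\le L_f\|y_s\|_{\infty,[-r,0]}+\|f(0)\|$, a bound of the form $C_A\int_{kr}^{t'}e^{-\lambda(t'-s)}(L_f\|y\|_{\infty,[s-r,s]}+\|f(0)\|)\,ds$; and the Young integral term, using \eqref{yloeve} with the increment of $\Phi(t'-\cdot)g(y_\cdot)$ controlled by \eqref{estphi1}, \eqref{estphi2}, $\|Dg\|\le C_g$ and Proposition~\ref{esth1}(ii), contributes something of the order $C_g C_A e^{-\lambda(\cdot)} r^\nu \ltn x\rtn_{\nu,\Delta_k}\big(\|y\|_{\infty}+\ltn y\rtn_{\beta}\big)$ on that window — this is precisely where the constant $M_1=KC_A e^{4\lambda r}r^\nu(1+\|A\|r)$ comes from.

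The key step is then to telescope: writing the solution value $y(kr)$ at the left endpoint of each window in terms of $y((k-1)r)$ via the same formula, one gets a geometric-type recursion in $k$ with ratio bounded by $e^{-\lambda r}\cdot e^{L r}=e^{-\lambda_0 r}<1$ (here the factor $e^{Lr}$, with $L=C_A C_f e^{\lambda r}$, absorbs the drift Lipschitz contribution after a discrete Gronwall step, cf.\ Lemma~\ref{discretegronwall} in the appendix), and forcing terms $(\|f(0)\|\vee\|g(0)\|)(1+\ltn x\rtn_{\nu,\Delta_{k+1}})$ and $C_g M_1\ltn x\rtn_{\nu,\Delta_{k+1}}(\|y\|_{\beta,\Delta_k}+\|y\|_{\beta,\Delta_{k+1}})$. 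Summing the geometric series from $k=0$ to $n-1$ with the factors $e^{-\lambda_0(n-k)r}$ yields \eqref{yinfty}; the constant $M_2$ is then read off as a fixed multiple of $C_A$ and $e^{4\lambda r}$ etc., chosen large enough (and enlarged finitely many times) to dominate all the accumulated constants, and crucially independent of $n$ and $t$ because the geometric ratio is $<1$. The in-particular statement \eqref{yinftyn} is the special case $t=(n+1)r$ (so that $[t-r,t]=\Delta_n$), or alternatively follows since the right-hand side of \eqref{yinfty} does not depend on the particular $t\in\Delta_n$ chosen once one takes suprema, noting $\|y\|_{\infty,\Delta_n}\le\sup_{t\in\Delta_n}\|y\|_{\infty,[t-r,t]}$.

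The main obstacle I anticipate is the bookkeeping in the Young-integral term: one must split $\int_{kr}^{t'}\Phi(t'-s)g(y_s)\,dx(s)$ so that the Hölder seminorm of the integrand on each sub-window is controlled without the exponential decay being lost, and one must be careful that $\ltn y\rtn_{\beta}$ only appears through the combination $\|y\|_{\beta,\Delta_k}+\|y\|_{\beta,\Delta_{k+1}}$ (two consecutive windows, because the delay reaches back by $r$). Getting the decay factor $e^{-\lambda_0(n-k)r}$ attached correctly to the $\|y\|_{\beta,\Delta_k}$-terms — rather than a weaker $e^{-\lambda_0(n-k-1)r}$ — is what the slack in $e^{4\lambda r}$ inside $M_1$ is there to absorb, so I would keep that constant generous throughout and not optimize. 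Everything else is a routine combination of \eqref{estphi1}--\eqref{estphi2}, Proposition~\ref{esth1}, the Young--Loeve bound, and the discrete Gronwall lemma.
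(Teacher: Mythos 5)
Your plan diverges from the paper's proof at the decisive step, and the divergence introduces a genuine gap. When you iterate the variation-of-constants formula window by window and estimate numerically at each endpoint, the homogeneous part of the per-window recursion is bounded via \eqref{estphi1} by $C_Ae^{-\lambda r}\|y((k-1)r)\|$, not by $e^{-\lambda r}\|y((k-1)r)\|$. The constant $C_A$ (which is $\geq 1$ and depends only on $A$, so it can be much larger than $e^{\lambda r}$) is therefore compounded at every window: telescoping from $k$ to $n$ produces factors of order $C_A^{\,n-k}e^{-\lambda(n-k)r}e^{L(n-k)r}$ rather than $M_2\,e^{-\lambda_0(n-k)r}$ with $M_2$ independent of $n$. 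Since $L=C_AC_fe^{\lambda r}$ is proportional to $C_f$ and may be tiny while $C_A$ is huge, the product $C_A^{\,n-k}$ is not absorbed by $e^{L(n-k)r}$, so your claimed per-window ratio $e^{-\lambda r}e^{Lr}=e^{-\lambda_0 r}<1$ is not what the estimates give; under the standing hypothesis (only $\lambda_0>0$) the recursion need not even be contractive. A second, related problem is that on each window the delayed drift $\|y\|_{\infty,[s-r,s]}$ reaches into the current window, so your discrete Gronwall step must absorb a term of size roughly $C_AC_fr\,\|y\|_{\infty,\Delta_k}$ from the right-hand side, which again requires a smallness condition that is not assumed.

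The paper's proof is structured precisely to avoid this: the variation-of-constants formula is applied \emph{once}, on the whole interval $[r,t]$, so \eqref{estphi1} is invoked a single time and $C_A$ enters only as a prefactor; then the \emph{continuous} Gronwall lemma (Lemma~\ref{contgronwall}) is applied to the function $t\mapsto e^{\lambda(t-r)}\|y\|_{\infty,[t-r,t]}$, and it is this step that converts the decay rate from $\lambda$ to $\lambda_0=\lambda-C_AC_fe^{\lambda r}$ with constants independent of $n$ and $t$. Only the Young-integral remainder $\beta(s)=\bigl\|\int_r^s\Phi(s-u)g(y_u)\,dx(u)\bigr\|$ is split over the windows $\Delta_k$, and there the factors $e^{-\lambda(s-kr-r)}$ come from the exact semigroup decay inside one Young--Loeve estimate per window (yielding $M_1=KC_Ae^{4\lambda r}r^{\nu}(1+\|A\|r)$ and the combination $\|y\|_{\beta,\Delta_{k}}+\|y\|_{\beta,\Delta_{k+1}}$), not from a product of per-window bounds. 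If you insist on a window structure for the homogeneous part, you must use the semigroup identity $\Phi(t-kr)\Phi(r)=\Phi(t-(k-1)r)$ exactly and postpone all numerical estimates to the end, which collapses back to the paper's single global application; as written, your telescoping does not prove \eqref{yinfty}. (Minor: the discrete Gronwall lemma in the appendix is Lemma~\ref{gronwall}; the label you cite does not exist, and in this proof it is the continuous version that is needed.)
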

\begin{proof}
First, for any $t\geq r$, by virtue of \eqref{yvar}, \eqref{estphi1} and the assumption $\bf H_2$ on $f$, the following inequalities hold 
	\begin{eqnarray*}
		\|y(t)\| &\leq& \|\Phi (t-r)y(r)\| + \int_r^t \|\Phi (t-s)f(y_s)\| ds + \Big\|\int_r^t\Phi (t-s)g(y_s) d  x (s) \Big\| \\
		&\leq& C_A e^{-\lambda  (t-r)} \|y(r)\| + \int_r^t C_A e^{-\lambda  (t-s)} \Big(C_f \|y_s\| + \|f(0)\|\Big) ds + \Big\|\int_r^t\Phi (t-s)g(y_s) d  x (s) \Big\|\\
		&\leq& C_A e^{-\lambda  (t-r)} \|y(r)\| + \frac{C_A}{\lambda } \|f(0)\| (1- e^{-\lambda (t-r)}) +C_AC_f \int_{r}^t e^{-\lambda (t-s)}  \|y\|_{\infty,[s-r,s]}ds  + \beta(t),
	\end{eqnarray*}
	where
\[\beta(t) := \begin{cases}
	\Big\|\int_r^t\Phi (t-s)g(y_s) d  x (s) \Big\|,&\quad t\geq r\\
	0,&\quad 0\leq t< r.
	\end{cases}.
\]	
	 Now assign  $\beta^*(t) := \sup_{[t-r,t]}\|\beta(s)\|$ for $t\geq r$.
It is easy to see that 
\begin{eqnarray*}
		\|y\|_{\infty,[t-r,t]} &=&\sup_{s\in[t-r,t]}\|y(s)\|\\
		&&\hspace{-2cm}\leq  C_A e^{-\lambda  (t-2r)}  \|y\|_{\infty,[0,r]}  + \frac{C_A}{\lambda } \|f(0)\| (1- e^{-\lambda (t-r)})  +C_AC_fe^{\lambda r} \int_{r}^t e^{-\lambda (t-s)}  \|y\|_{\infty,[s-r,s]}ds + \beta^*(t).
	\end{eqnarray*}
 Consequently, 
\begin{eqnarray*}
		e^{\lambda (t-r)}\|y\|_{\infty,[t-r,t]} &\leq&  C_A e^{\lambda r}  \|y\|_{\infty,[0,r]} + \frac{C_A}{\lambda} \|f(0)\| ( e^{\lambda (t-r)}-1) +e^{\lambda (t-r)} \beta^*(t) \notag\\
		&&\hspace*{2cm}+\; C_AC_fe^{\lambda r} \int_{r}^t e^{\lambda (s-r)}  \|y\|_{\infty,[s-r,s]}ds.
	\end{eqnarray*}	
Recall from \eqref{notationL} that $L:= C_AC_fe^{\lambda r}$. By applying the continuous Gronwall lemma~\ref{contgronwall} for the function $e^{\lambda (t-r)}\|y\|_{\infty,[t-r,t]}$ and performing several direct computations, we obtain
\begin{eqnarray*}
		e^{(\lambda- L) (t-r)}\|y\|_{\infty,[t-r,t]} &\leq&  C_A e^{\lambda r}   \|y\|_{\infty,[0,r]} + \frac{C_A \|f(0)\| }{\lambda-L}( e^{(\lambda-L)  (t-r)}-1) +e^{(\lambda-L)  (t-r)} \beta^*(t)\notag\\
		&& +L \int_{r}^t e^{(\lambda-L) (s-r)}  \beta^*(s)ds.
	\end{eqnarray*}	
Replacing $\lambda_0= \lambda-L$ as in \eqref{notationL} yields
\begin{eqnarray}\label{nt2003}
		&&e^{\lambda_0 (t-r)}\|y\|_{\infty,[t-r,t]}\notag\\
		 &&\hspace{2cm}\leq  C_A e^{\lambda r}   \|y\|_{\infty,[0,r]}+ \frac{C_A \|f(0)\| }{\lambda_0 }\left[ e^{\lambda_0  (t-r)}-1\right] +e^{\lambda_0  (t-r)} \beta^*(t) +L \int_{r}^t e^{\lambda_0 (s-r)}  \beta^*(s)ds.\notag\\
	\end{eqnarray}	
	
	Next, we are going to to estimate the quantities $\beta$ and $\beta^*$. First, take any $s\in \R$ such that $s\geq r$ and $\frac{s}{r}$ is not an integer. Put $n:= \lfloor s/r\rfloor$, the integer part of $\frac{s}{r}$.
Due to the definition of $\beta(s)$, the inequality \eqref{yloeve}, the estimates \eqref{estphi1}, \eqref{estphi2} and Proposition \ref{esth1}, we obtain
\allowdisplaybreaks
\begin{eqnarray*}
&&\hspace*{-1cm}\beta(s) \leq \\[5pt]
&\leq&\sum_{k=1}^{n-1}\left\| \int_{kr}^{(k+1)r}\Phi (s-u)g(y_u) d x(u) \right\|+\left\|\int_{nr}^s \Phi (s-u)g(y_u) d  x (u) \right\|\\
&\leq&
 \sum_{k=1}^{n-1}  r^\nu\ltn  x \rtn_{\nu,\Delta_k}\left[\| \Phi (s-kr)g(y_{kr}) \|+Kr^\beta \ltn \Phi(s-\cdot)g(y_{\cdot})\rtn_{\beta,\Delta_k} \right]\\
&&\hspace*{2cm}+\;  r^\nu\ltn  x \rtn_{\nu,[nr,s]}\left[\| \Phi (s-nr)g(y_{nr}) \|+Kr^\beta \ltn \Phi(s-\cdot)g(y_{\cdot})\rtn_{\beta,[nr,s]} \right]\\
&\leq&
\sum_{k=1}^{n-1}  r^\nu\ltn  x \rtn_{\nu,\Delta_k}\left[C_Ae^{-\lambda (s-kr)} (C_g\|y_{kr}\|+\|g(0)\|)+\right.\\
&&\hspace{2cm}+\left. Kr^\beta \left(\ltn \Phi(s-\cdot)\rtn_{\beta,\Delta_k} \|g(y_{\cdot})\|_{\infty,\Delta_k} + \|\Phi(s-\cdot)\|_{\infty,\Delta_k} \ltn g(y_{\cdot})\rtn_{\beta,\Delta_k}\right)\right]
\\
&&+ r^\nu\ltn  x \rtn_{\nu,\Delta_n}\left[C_Ae^{-\lambda (s-nr)} (C_g\|y_{nr}\|+\|g(0)\|)+\right.\\
&&\hspace{2cm}+\left. Kr^\beta \left(\ltn \Phi(s-\cdot)\rtn_{\beta,[nr,s]} \|g(y_{\cdot})\|_{\infty,[nr,s]} + \|\Phi(s-\cdot)\|_{\infty,[nr,s]} \ltn g(y_{\cdot})\rtn_{\beta,[nr,s]}\right)\right]
\\
&\leq&
\sum_{k=1}^{n}  r^\nu\ltn  x \rtn_{\nu,\Delta_k}\left[C_Ae^{-\lambda (s-kr)} (C_g\sup_{-r\leq u\leq0}\|y(kr+u)\|+\|g(0)\|)+\right.\\
&&\hspace{2cm}+\left. Kr^\beta \left(\|A\|C_Ae^{-\lambda(s-kr-r)}r^{1-\beta} \|g(y_{\cdot})\|_{\infty,\Delta_k} + C_Ae^{-\lambda(s-kr-r)} \ltn g(y_{\cdot})\rtn_{\beta,\Delta_k}\right)\right]
\\
&\leq&
\sum_{k=1}^{n}  r^\nu\ltn  x \rtn_{\nu,\Delta_k}\left[C_Ae^{-\lambda (s-kr)} (C_g\|y\|_{\infty,\Delta'_k}+\|g(0)\|)+\right.\\
&&\hspace{1cm}+\left. Kr^\beta \left(\|A\|C_Ae^{-\lambda(s-kr-r)}r^{1-\beta} (C_g\|y\|_{\infty,\Delta'_k} +\|g(0)\|) + C_Ae^{-\lambda(s-kr-r)}C_g \ltn y\rtn_{\beta,\Delta'_k}\right)\right]\\
&\leq&
\sum_{k=1}^{n}  r^\nu\ltn  x \rtn_{\nu,\Delta_k}KC_A (1+\|A\|r)e^{-\lambda (s-kr-r)}\left[C_g\left(\|y\|_{\infty,\Delta'_k}+r^\beta\ltn y\rtn_{\beta,\Delta'_k}\right) +\|g(0)\|\right]\\
&\leq&
\sum_{k=1}^{n}  r^\nu\ltn  x \rtn_{\nu,\Delta_k}KC_A(1+\|A\|r)e^{-\lambda (s-kr-r)} \left[C_g\left( \|y\|_{\beta,\Delta_{k-1}}+ \|y\|_{\beta,\Delta_k}\right) + \|g(0)\|\right].
\end{eqnarray*}
Taking the supremum on $[s-r,s]$ yields
\begin{eqnarray}\label{int210}
	\beta^*(s) 
	&\leq
	\sum_{k=1}^{n} e^{2\lambda r}  r^\nu\ltn  x \rtn_{\nu,\Delta_k}KC_A(1+\|A\|r)e^{-\lambda (s-kr)} \left[C_g\left( \|y\|_{\beta,\Delta_{k-1}}+ \|y\|_{\beta,\Delta_k}\right) + \|g(0)\|\right].\notag\\
	\end{eqnarray}
	Therefore, given that $\lambda_0 = \lambda-L > 0$,
	\begin{eqnarray}\label{int211}
\beta^*(s) e^{\lambda_0 (s-r)}
	&\leq&
	 \sum_{k=1}^{\lfloor s/r\rfloor} e^{2\lambda r}r^\nu KC_A(1+\|A\|r) \ltn  x \rtn_{\nu,\Delta_k}e^{-Ls+\lambda kr} \left[C_g\left( \|y\|_{\beta,\Delta_{k-1}}+ \|y\|_{\beta,\Delta_k}\right) + \|g(0)\|\right].\notag\\
	\end{eqnarray}
Combining \eqref{int211} with \eqref{nt2003} and applying the same arguments as in \cite{duchong19}, we obtain for any fixed $n$ and any $t\in[nr,(n+1)r)$, 
\begin{eqnarray*}
&&e^{\lambda_0 (t-r)}\|y\|_{\infty,[t-r,t]}\\
&\leq&  C_A e^{\lambda r} \|y\|_{\infty,[0,r]}+ \frac{C_A \|f(0)\| }{\lambda_0}( e^{\lambda_0 (t-r)}-1) \notag\\
 && +\sum_{k=1}^{n} KC_Ae^{2\lambda r} r^\nu (1+\|A\|r)\ltn  x \rtn_{\nu,\Delta_k}e^{\lambda_0 kr}e^{-L(t-kr)} \left[C_g\left( \|y\|_{\beta,\Delta_{k-1}}+ \|y\|_{\beta,\Delta_k}\right) + \|g(0)\|\right] \\
&&\hspace{-1cm}+L \int_{r}^{t}  \sum_{k=1}^{\lfloor s/r\rfloor}KC_Ae^{2\lambda r} r^\nu (1+\|A\|r) \ltn  x \rtn_{\nu,\Delta_k}e^{\lambda_0 kr}e^{-L(s-kr)} \left[C_g\left( \|y\|_{\beta,\Delta_{k-1}}+ \|y\|_{\beta,\Delta_k}\right) + \|g(0)\|\right]ds\\
 &\leq& C_A e^{\lambda r}   \|y\|_{\infty,[0,r]}+ \frac{C_A \|f(0)\| }{\lambda_0}( e^{\lambda_0  t}-1) \notag\\
&& +\sum_{k=1}^{n}KC_Ae^{2\lambda r} r^\nu (1+\|A\|r)\ltn  x \rtn_{\nu,\Delta_k}e^{\lambda_0 kr} \left[C_g\left( \|y\|_{\beta,\Delta_{k-1}}+ \|y\|_{\beta,\Delta_k}\right) + \|g(0)\|\right] \\
&&\times \left(e^{-L(t-kr)} +L \int_{kr}^{t}e^{-L(s-kr)}ds \right)\\
 &\leq& C_A e^{\lambda r}  \|y\|_{\infty,[0,r]}+ \frac{C_A \|f(0)\| }{\lambda_0 }( e^{\lambda_0  t}-1) \notag\\
&& +\sum_{k=1}^{n} KC_Ae^{2\lambda r} r^\nu (1+\|A\|r)\ltn  x \rtn_{\nu,\Delta_k}e^{\lambda_0 kr} \left[C_g\left( \|y\|_{\beta,\Delta_{k-1}}+ \|y\|_{\beta,\Delta_k}\right) + \|g(0)\|\right]. 
	\end{eqnarray*}
Consequently, we have
\begin{eqnarray}
\|y\|_{\infty,[t-r,t]}e^{\lambda_0 (t-r)}
 &\leq&  C_A e^{\lambda r}  \|y\|_{\infty,[0,r]}+ \frac{C_A \|f(0)\| }{\lambda_0 }\left[ e^{\lambda_0  (t-r)}-1\right]  \notag\\
 &&+\; KC_Ae^{2\lambda r} r^\nu (1+\|A\|r)\|g(0)\|\sum_{k=1}^{n} \ltn x\rtn_{\nu,\Delta_k}e^{\lambda_0 kr}\notag\\
&& +\; C_gKC_Ae^{2\lambda r} r^\nu (1+\|A\|r)\sum_{k=1}^{n} \ltn x\rtn_{\nu,\Delta_k}e^{\lambda_0 kr}\left( \|y\|_{\beta,\Delta_{k-1}}+ \|y\|_{\beta,\Delta_k}\right). \notag
\end{eqnarray}
This shows that there exists a positive number $M_2$ such that 
\begin{eqnarray}
\|y\|_{\infty,[t-r,t]}&\leq& M_2 e^{-\lambda_0 nr}  \|y\|_{\infty,[0,r]}+ (\|f(0)\|\vee \|g(0)\|) M_2\sum_{k=0}^{n-1} (1+ \ltn x\rtn_{\nu,\Delta_{k+1}})e^{-\lambda_0 (n-k)r} \notag\\
&&+\; C_gM_1 \sum_{k=0}^{n-1} \ltn x\rtn_{\nu,\Delta_{k+1}}e^{-\lambda_0 (n-k)r}\left( \|y\|_{\beta,\Delta_{k}}+ \|y\|_{\beta,\Delta_{k+1}}\right)\notag
	\end{eqnarray}
	for all $t\in[nr,(n+1)r)$, $n\in\N$. Due to continuity, this inequality also holds for $t= (n+1)r$ and for all $t\in \Delta_n$. Thus \eqref{yinfty} is proved and so is \eqref{yinftyn} by assigning $t:= (n+1)r$ in \eqref{yinfty}.
\end{proof}

\begin{remark}
Inequalities \eqref{yinftyn} and \eqref{ykk} show that the supremum norm of the solution on $\Delta_n$ depends not only on itself (up to a coefficient dependent on $x$) but also on the H\"older norm of the solution on the previous intervals. This is different from the non-delay case (see \cite{duchong19}) and is very challenging to deal with. We therefore need to estimate the $\beta-$H\"older norm of $y$ in the similar form to \eqref{yinftyn} in the following Lemma. \end{remark}
Assign
\begin{equation}\label{notationM3}
M_3:=Kr^\nu e^{(L_f+4\lambda) r}\Big[1+C_AL_fr(1+\|A\|r)\Big].
\end{equation}
 
\begin{lemma}\label{est.ybeta} For any $n\in \N$, $n\geq 1$,
there exists a positive constant $M_4$ independent of $n$, such that
the $\beta-$H\"older norm of the solution of \eqref{YDDE} on $\Delta_n$, can be estimated as follows
\begin{eqnarray}\label{ybetan}
r^\beta\ltn y\rtn_{\beta,\Delta_n} 
&\leq& M_4e^{-\lambda_0 nr}  \|y\|_{\infty,[0,r]}+M_4(\|f(0)\|\vee \|g(0)\|)  \sum_{k=0}^{n-1} (1+ \ltn x\rtn_{\nu,\Delta_{k+1}})e^{-\lambda_0 (n-k)r}\notag\\
&&+\;C_gM_3  \sum_{k=0}^{n-1} \ltn x\rtn_{\nu,\Delta_{k+1}}e^{-\lambda_0 (n-k)r}\left( \|y\|_{\beta,\Delta_{k}}+ \|y\|_{\beta,\Delta_{k+1}}\right),
\end{eqnarray}
where the constant $M_3$ is defined by the formula \eqref{notationM3}.
\end{lemma}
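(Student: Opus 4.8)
The plan is to establish \eqref{ybetan} by a short, purely \emph{local} computation on $\Delta_n$ via the variation of constants formula, and then to substitute into the result the uniform-norm estimate \eqref{yinfty} of Lemma~\ref{lem.yinfty}. The point is that, unlike in the proof of Lemma~\ref{lem.yinfty}, no convolution or Gronwall argument is needed here: the entire past of the solution is already carried by the single value $y(s)$.

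First I would fix $n\ge1$, take $nr\le s<t\le(n+1)r$, and use \eqref{yvar} with base point $t_0=s$ to write $y(t)-y(s)=[\Phi(t-s)-I]\,y(s)+\int_s^t\Phi(t-u)f(y_u)\,du+\int_s^t\Phi(t-u)g(y_u)\,dx(u)$. The first term is handled by $\|\Phi(t-s)-I\|\le\|A\|C_A(t-s)\le\|A\|C_A r^{1-\beta}(t-s)^\beta$; the second by $\|\int_s^t\Phi(t-u)f(y_u)\,du\|\le C_A(t-s)(C_f\|y\|_{\infty,\Delta'_n}+\|f(0)\|)$ using \eqref{estphi1} and Proposition~\ref{esth1}(i); and the Young integral by the Young--Loeve estimate \eqref{yloeve} applied to $u\mapsto\Phi(t-u)g(y_u)$ on $[s,t]$, together with \eqref{estphi1}--\eqref{estphi2}, the elementary product estimate for H\"older seminorms and Proposition~\ref{esth1}, which yields a bound of order $(t-s)^{\nu}\ltn x\rtn_{\nu,\Delta_n}(C_g\|y\|_{\infty,\Delta'_n}+\|g(0)\|)+(t-s)^{\beta+\nu}\ltn x\rtn_{\nu,\Delta_n}\bigl(\|A\|C_A(C_g\|y\|_{\infty,\Delta'_n}+\|g(0)\|)+C_AC_g\ltn y\rtn_{\beta,\Delta'_n}\bigr)$. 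Dividing by $(t-s)^\beta$, using $t-s\le r$ together with $\|y\|_{\infty,\Delta'_n}\le\|y\|_{\infty,\Delta_{n-1}}+\|y\|_{\infty,\Delta_n}$ and $\ltn y\rtn_{\beta,\Delta'_n}\le\ltn y\rtn_{\beta,\Delta_{n-1}}+\ltn y\rtn_{\beta,\Delta_n}$, and taking the supremum over $nr\le s<t\le(n+1)r$, I obtain an estimate of the form
\[
r^\beta\ltn y\rtn_{\beta,\Delta_n}\ \le\ C_1\bigl(\|y\|_{\infty,\Delta_{n-1}}+\|y\|_{\infty,\Delta_n}+\|f(0)\|\bigr)+C_2\,\ltn x\rtn_{\nu,\Delta_n}\bigl(\|g(0)\|+C_g\|y\|_{\beta,\Delta_{n-1}}+C_g\|y\|_{\beta,\Delta_n}\bigr),
\]
where in the $\ltn x\rtn$-weighted part I have already bounded $\|y\|_{\infty,\Delta_k}$ and $r^\beta\ltn y\rtn_{\beta,\Delta_k}$ by $\|y\|_{\beta,\Delta_k}$.

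It remains to recognise this right-hand side as (a bound for) the right-hand side of \eqref{ybetan}. For the two \emph{un-weighted} sup-norms $\|y\|_{\infty,\Delta_{n-1}}$, $\|y\|_{\infty,\Delta_n}$ I substitute \eqref{yinftyn} \emph{once}; this reproduces exactly the three structural terms of \eqref{ybetan} (the shift $e^{-\lambda_0(n-1)r}\le e^{\lambda_0 r}e^{-\lambda_0 nr}$ and the fact that the sums for $\Delta_{n-1}$ are truncations of those for $\Delta_n$ are harmless). The $\ltn x\rtn_{\nu,\Delta_n}$-weighted terms $C_g\|y\|_{\beta,\Delta_{n-1}}$, $C_g\|y\|_{\beta,\Delta_n}$ are left in place: since $\ltn x\rtn_{\nu,\Delta_n}=\ltn x\rtn_{\nu,\Delta_{(n-1)+1}}$, they are precisely the $k=n-1$ summand of the last sum in \eqref{ybetan}. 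Finally the constant terms $C_1\|f(0)\|$ and $C_2\ltn x\rtn_{\nu,\Delta_n}\|g(0)\|$ are re-absorbed into the decaying sums via the elementary bound $c\le e^{\lambda_0 r}c\sum_{k=0}^{n-1}e^{-\lambda_0(n-k)r}$ (valid because $\sum_{k=0}^{n-1}e^{-\lambda_0(n-k)r}\ge e^{-\lambda_0 r}$). Collecting the numerical factors (the $e^{4\lambda r}$ inherited from $M_1$, the powers of $r$, the constants $C_A,\|A\|,K$, and a loose $e^{L_f r}$ for the increments of $\Phi$ on an interval of length $\le r$) into the $M_3$ of \eqref{notationM3} and a suitable $M_4$ gives \eqref{ybetan}.

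The difficulty is bookkeeping rather than analysis: one must keep track of \emph{which} sup-norms get expanded through Lemma~\ref{lem.yinfty} and which are retained. Expanding an $\ltn x\rtn$-weighted sup-norm would generate terms quadratic in $\ltn x\rtn$, and — more damagingly — expanding $\|y\|_{\infty,\Delta_k}$ a second time (once here and again inside \eqref{yinftyn}) would produce spurious factors growing linearly in $n$, ruining the clean weights $e^{-\lambda_0(n-k)r}$ of \eqref{ybetan}. Note also that the restriction $n\ge1$ keeps every interval $\Delta'_k$ with $k\ge1$ inside $[0,\infty)$, where the solution is genuinely $\beta$-H\"older (its $\beta$-H\"older size on $\Delta_0$ being quantified by \eqref{ybeta0r}), so that all norms on the right-hand side of \eqref{ybetan} are finite and the delay-induced couplings to $\Delta_{k-1}$ never reach the low-regularity initial segment $[-r,0]$.
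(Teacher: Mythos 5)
Your argument is sound and reaches the stated structural estimate, but it runs along a genuinely different track than the paper's. The paper never invokes the variation of constants formula here: it estimates $y(t)-y(s)=\int_s^t[Ay(u)+f(y_u)]du+\int_s^t g(y_u)dx(u)$ directly, bounding $\|y_u\|\le\|y\|_{\infty,[s-r,s]}+(u-s)^\beta\ltn y\rtn_{\beta,[s,u]}$, which leaves a term $L_f\int_s^t\ltn y\rtn_{\beta,[s,u]}du$ on the right; it then substitutes \eqref{yinfty} for $\|y\|_{\infty,[s-r,s]}$ and closes the self-referencing integral term with the continuous Gronwall Lemma~\ref{contgronwall} applied to $v\mapsto r^\beta\ltn y\rtn_{\beta,[nr,v]}$, which is where the factor $e^{L_fr}$ in $M_3$, $M_4$ comes from. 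You instead absorb the drift $Ay(u)$ into the semigroup $\Phi$ and crush $\|f(y_u)\|$ by the sup over $\Delta'_n$, so no Gronwall step is needed and the only solution-dependent quantities on the right are $\|y\|_{\infty,\Delta_{n-1}}$, $\|y\|_{\infty,\Delta_n}$ (expanded once through \eqref{yinftyn}, with no circularity since Lemma~\ref{lem.yinfty} is independent of this lemma) and the retained $C_g\ltn x\rtn_{\nu,\Delta_n}$-weighted H\"older norms, which are exactly the $k=n-1$ summand. Your route buys a shorter, Gronwall-free argument; the paper's route buys cleaner constants, since working with the raw equation keeps the operator norm $C_A$ out of the $C_g$-weighted coefficient.

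The one soft spot is your final claim that the numerical factors collect into the $M_3$ of \eqref{notationM3}: your Young-integral estimate carries extra factors of $C_A$ (from $\|\Phi(t-u)\|\le C_A$ and from the product rule for $\ltn\Phi(t-\cdot)g(y_\cdot)\rtn_\beta$) in front of $C_g\left(\|y\|_{\beta,\Delta_{n-1}}+\|y\|_{\beta,\Delta_n}\right)$, and these need not be dominated by $Kr^\nu e^{(L_f+4\lambda)r}\left[1+C_AL_fr(1+\|A\|r)\right]$ for all admissible parameters. So, strictly, you prove \eqref{ybetan} with some constant $M_3'$ (independent of $n$, $x$ and $C_g$) in place of \eqref{notationM3}. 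This is immaterial for the sequel, since Theorem~\ref{attractor} only uses that the coefficient of the last sum has the form $C_g$ times a constant not depending on $C_g$, $n$ or $x$; but you should state the constant discrepancy rather than assert the literal \eqref{notationM3}.
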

\begin{proof}
We fix $v\in \Delta_n$, and consider  $s,t\in[nr,v]$, $s<t$. Observe that
\begin{eqnarray}
\|y(t)-y(s)\| &=&\left\| \int_s^t [Ay(u)+f(y_u)]du + \int_s^tg(y_u)dx(u)\right\|\notag\\
&\leq& \|f(0)\|(t-s)+(\|A\|+C_f) \int_s^t\|y_u\|du+ \left\|\int_s^tg(y_u)dx(u)\right\|\notag.
\end{eqnarray}
 Since $u\in [s,t]\subset [nr,v]$, it follows that $\|y_u\|\leq \|y\|_{\infty,[s-r,s]} + (u-s)^\beta\ltn y\rtn_{\beta,[s,u]}$. In addition, by the Young-Loeve inequality  \eqref{yloeve}, Proposition~\ref{esth1} and the definition 
of the norm $\|\cdot\|_{\beta,[a,b]}$, 
\begin{eqnarray*}
\left\|\int_s^tg(y_u)dx(u)\right\|&\leq&(t-s)^\nu\ltn x\rtn_{\nu,\Delta_n}K\left[C_g\left(\|y\|_{\beta,\Delta_{n-1}}+\|y\|_{\beta,\Delta_{n}}\right) +\|g(0)\|\right].
\end{eqnarray*}
Therefore, due to $L_f =\|A\|+C_f$, it follows that
\begin{eqnarray}\label{yadd2}
\frac{\|y(t)-y(s)\|}{(t-s)^\beta} &\leq &\|f(0)\|r^{1-\beta}+L_f r^{1-\beta}\|y\|_{\infty,[s-r,s]}+ L_f\int_s^t \frac{(u-s)^\beta}{(t-s)^\beta}\ltn y\rtn_{\beta,[s,u]}du \notag\\
&&+\; Kr^{\nu-\beta}\ltn x\rtn_{\nu,\Delta_n}\left[C_g\left(\|y\|_{\beta,\Delta_{n-1}}+\|y\|_{\beta,\Delta_{n}}\right) +\|g(0)\|\right]\notag\\
&\leq &\|f(0)\|r^{1-\beta}+L_f r^{1-\beta}\|y\|_{\infty,[s-r,s]}+L_f \int_s^t \ltn y\rtn_{\beta,[s,u]}du \notag\\
&&+\; Kr^{\nu-\beta}\ltn x\rtn_{\nu,\Delta_n}\left[C_g\left(\|y\|_{\beta,\Delta_{n-1}}+\|y\|_{\beta,\Delta_{n}}\right) +\|g(0)\|\right]\notag\\
&\leq &(\|f(0)\|\vee \|g(0)\|) \max\{ r^{1-\beta},  Kr^{\nu-\beta}\}(1+\ltn x\rtn_{\nu,\Delta_n} ) + L_f r^{1-\beta}\|y\|_{\infty,[s-r,s]}\notag\\
&&+\; C_gKr^{\nu-\beta}\ltn x\rtn_{\nu,\Delta_n}\left(\|y\|_{\beta,\Delta_{n-1}}+\|y\|_{\beta,\Delta_{n}}\right)+ L_f\int_s^t \ltn y\rtn_{\beta,[s,u]}du .
\end{eqnarray}
Together with \eqref{yinfty}, and with the notation $M'_2:=L_frM_2+ (r \vee K r^\nu)e^{\lambda_0 r}>0$, the following estimate holds for all  $[s,t]\subset [nr,v]$
\allowdisplaybreaks
\begin{eqnarray}
&&r^\beta\frac{\|y(t)-y(s)\|}{(t-s)^\beta} \notag\\
&\leq &(\|f(0)\|\vee \|g(0)\|) \max\{ r,  Kr^{\nu}\}(1+\ltn x\rtn_{\nu,\Delta_n} ) \notag\\
&&+\; L_fr M_2e^{-\lambda_0 nr}   \|y\|_{\infty,[0,r]}+ (\|f(0)\|\vee \|g(0)\|) L_frM_2\sum_{k=0}^{n-1} (1+ \ltn x\rtn_{\nu,\Delta_{k+1}})e^{-\lambda_0 (n-k)r} \notag\\
&&+\; C_gL_frM_1 \sum_{k=0}^{n-1} \ltn x\rtn_{\nu,\Delta_{k+1}}e^{-\lambda_0 (n-k)r}\left( \|y\|_{\beta,\Delta_{k}}+ \|y\|_{\beta,\Delta_{k+1}}\right)\notag\\
&&+\; C_g Kr^{\nu}\ltn x\rtn_{\nu,\Delta_n}\left(\|y\|_{\beta,\Delta_{n-1}}+\|y\|_{\beta,\Delta_{n}}\right)+ L_fr^\beta\int_s^t \ltn y\rtn_{\beta,[s,u]}du \notag\\
&\leq& M'_2e^{-\lambda_0 nr}  \|y\|_{\infty,[0,r]} +(\|f(0)\|\vee \|g(0)\|)  M'_2\sum_{k=0}^{n-1} (1+ \ltn x\rtn_{\nu,\Delta_{k+1}})e^{-\lambda_0 (n-k)r}\notag\\
&&+\; \Big(C_g L_f r M_1 + C_g Kr^\nu e^{\lambda_0 r}\Big)\sum_{k=0}^{n-1} \ltn x\rtn_{\nu,\Delta_{k+1}}e^{-\lambda_0 (n-k)r}\left( \|y\|_{\beta,\Delta_{k}}+ \|y\|_{\beta,\Delta_{k+1}}\right)\notag\\
&&+\; L_f\int_s^t r^\beta\ltn y\rtn_{\beta,[s,u]}du. \notag 
\end{eqnarray}
This implies
\begin{eqnarray}
r^\beta\ltn y\rtn_{\beta,[nr,v]} 
&\leq& M'_2e^{-\lambda_0 nr}  \|y\|_{\infty,[0,r]} +M'_2(\|f(0)\|\vee \|g(0)\|)  \sum_{k=0}^{n-1} (1+ \ltn x\rtn_{\nu,\Delta_{k+1}})e^{-\lambda_0 (n-k)r}\notag\\
&&+\; \Big(C_g L_f r M_1 + C_g Kr^\nu e^{\lambda_0 r}\Big)  \sum_{k=0}^{n-1} \ltn x\rtn_{\nu,\Delta_{k+1}}e^{-\lambda_0 (n-k)r}\left( \|y\|_{\beta,\Delta_{k}}+ \|y\|_{\beta,\Delta_{k+1}}\right)\notag\\
&&+\; L_f\int_{nr}^vr^\beta \ltn y\rtn_{\beta,[nr,u]}du. \notag
\end{eqnarray}
Applying the Gronwall Lemma~\ref{contgronwall} to the function $r^\beta\ltn y\rtn_{\beta,[nr,\cdot]}$ yields
\begin{eqnarray*}
r^\beta\ltn y\rtn_{\beta,[nr,v]} 
&\leq& \Big[M'_2e^{-\lambda_0 nr}   \|y\|_{\infty,[0,r]} +M'_2(\|f(0)\|\vee \|g(0)\|)  \sum_{k=0}^{n-1} (1+ \ltn x\rtn_{\nu,\Delta_{k+1}})e^{-\lambda_0 (n-k)r}+\Big.\notag\\
&&\hspace{-1cm}+\; \Big. \Big(C_g L_f r M_1 + C_g Kr^\nu e^{\lambda_0 r}\Big) \sum_{k=0}^{n-1} \ltn x\rtn_{\nu,\Delta_{k+1}}e^{-\lambda_0 (n-k)r}\left( \|y\|_{\beta,\Delta_{k}}+ \|y\|_{\beta,\Delta_{k+1}}\right)\Big]\times\notag\\
&&\times \Big(1+L_f\int_{nr}^ve^{L_f(v-u)}du\Big).\notag\\
\end{eqnarray*}
Consequently, 
\begin{eqnarray*}
r^\beta\ltn y\rtn_{\beta,\Delta_n} 
&\leq& \Big[M'_2e^{-\lambda_0 nr}  \|y\|_{\infty,[0,r]} +M'_2(\|f(0)\|\vee \|g(0)\|)  \sum_{k=0}^{n-1} (1+ \ltn x\rtn_{\nu,\Delta_{k+1}})e^{-\lambda_0 (n-k)r}+\Big.\notag\\
&&\hspace{-1cm}+\; \Big. C_gKr^\nu\Big( e^{\lambda_0 r}+L_fr^{1-\nu}M_1 \Big)  \sum_{k=0}^{n-1} \ltn x\rtn_{\nu,\Delta_{k+1}}e^{-\lambda_0 (n-k)r}\left( \|y\|_{\beta,\Delta_{k}}+ \|y\|_{\beta,\Delta_{k+1}}\right)\Big] e^{L_fr}.
\end{eqnarray*}
Assigning $M_4 := e^{L_fr}M'_2$ and taking into account \eqref{notationM1}, \eqref{notationM3}  we finally obtain \eqref{ybetan}. 
\end{proof}
\medskip
 
 Now we are in a position to prove the main result of the paper on the existence of a random pullback attractor for the stochastic Young differential delay equation \eqref{SDDE}, which is formulated as follows.
\begin{theorem}\label{attractor}
Consider the system \eqref{SDDE}
\[
dy(t) = [Ay(t)+ f(y_t)]dt + g(y_t)dZ(t),\quad y_0=\eta\in C^{0,\beta_0}([-r,0],\R^d).
\]
Assume that the conditions  $\bf {H_1, H_2,H_3}$ hold, and additionally
\[
C_AC_f<\lambda e^{-\lambda r}.
\] 
Then there exists $\eps>0 $ such that for $C_g<\eps$, the generated random dynamical system of \eqref{SDDE} possesses a random pullback attractor $\mathcal{A}(x)$ which is in $\cC^{\beta}([-r,0],\R^d) \subset \cC^{0,\beta_0}([-r,0],\R^d)$.
\end{theorem}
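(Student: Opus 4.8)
The plan is to deduce the existence of the attractor from the existence of a \emph{compact random pullback absorbing set} in the universe $\cD$ of tempered random sets, and then to invoke the abstract characterization \eqref{at} (as in \cite[Theorem 3.5]{flandolibjorn}, \cite[Theorem 2.4]{GAKLBSch2010}); the RDS itself and its continuity are already provided by Theorem~\ref{gen.rds} and its corollary. Since $C_AC_f<\lambda e^{-\lambda r}$ is exactly condition \eqref{lambda0_positive}, the constant $\lambda_0=\lambda-C_AC_fe^{\lambda r}$ is positive and will provide the contraction rate.

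The first step is to merge Lemmas~\ref{lem.yinfty} and \ref{est.ybeta} into a single recursion. Writing $u_n:=\|y\|_{\beta,\Delta_n}$ and $\zeta_k:=\ltn x\rtn_{\nu,\Delta_k}$, adding \eqref{yinftyn} and \eqref{ybetan} gives, with $M:=M_1+M_3$ and $Q:=M_2+M_4$,
\[
u_n \;\le\; P_n + Qe^{-\lambda_0 nr}u_0 + C_gM\sum_{k=0}^{n-1}\zeta_{k+1}\,e^{-\lambda_0(n-k)r}\,(u_k+u_{k+1}),
\]
where $P_n:=Q(\|f(0)\|\vee\|g(0)\|)\sum_{k=0}^{n-1}(1+\zeta_{k+1})e^{-\lambda_0(n-k)r}$, and $u_0=\|y\|_{\beta,[0,r]}$ is controlled via \eqref{ybeta0r} of Remark~\ref{specialk1} in terms of $\|\eta\|_{\beta_0,[-r,0]}$, $\zeta_0$ and $e^{D\zeta_0^{1/(\nu-\beta_0)}}$. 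I would apply this with $x$ the path of $\theta_{-nr}\omega$, so that by the shift property $\varphi(nr,\theta_{-nr}\omega)\eta$ is, up to a time shift, the solution restricted to $\Delta_{n-1}$; by stationarity the $\zeta_k$'s occurring are distributed as increments of $Z$ over the past of $\omega$, and by the moment condition \eqref{Gamma} (recall $\tfrac1{\nu-\beta}>1$) together with the ergodic theorem they form a sequence of sublinear growth with $\tfrac1n\sum_{k=1}^n\zeta_k$ converging to a finite limit, while $\zeta_0(\theta_{-nr}\omega)$ and $\rho(\theta_{-nr}\omega)$ are tempered. Hence $Qe^{-\lambda_0 nr}u_0$ is $o(1)$ uniformly over $\eta$ in a tempered ball, and the pullback of $P_n$ is bounded by a tempered, $\eta$-independent random variable.

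The heart of the proof, and the step I expect to be the main obstacle, is to resolve this discrete Gronwall inequality: its coefficients $C_gM\zeta_{k+1}$ are random and, in general, unbounded, so one cannot simply divide by $1-C_gM\zeta_n e^{-\lambda_0 r}$. To handle this I would introduce a second layer of greedy stopping times, now on the lattice of intervals $\{\Delta_n\}$: fix a threshold $\mu_0$ (chosen large, independently of $C_g$) and partition the $\Delta_n$'s into blocks on which $\sum\zeta_{k+1}\le\mu_0$, treating separately a block reduced to a single very noisy interval by means of the clean recurrence \eqref{ykk}--\eqref{Nn} of Proposition~\ref{esty1}, whose growth factor $e^{4L_fr+\kappa N_n}$ is finite with $N_n$ bounded in terms of $\zeta_n^{1/(\nu-\beta)}$. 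On a block of the first kind the total feedback weight is at most $C_gM\mu_0<\tfrac12 e^{\lambda_0 r}$ once $C_g$ is small, so a discrete Gronwall estimate closes and $u$ contracts across the block by $e^{-\lambda_0(\text{block length})}(1+O(C_g\mu_0))$; blocks of the second kind occur with frequency $\bP(\zeta_0>\mu_0)$. Invoking the ergodic theorem for $\zeta_k$ and $\zeta_k^{1/(\nu-\beta)}$, the net multiplicative rate of $u_n$ along the pullback is bounded above by $-\lambda_0 r + c_1C_g + c_2C_g^{1/(\nu-\beta)} + c_3\,\bP(\zeta_0>\mu_0)$; choosing $\mu_0$ large and then $\eps>0$ small, this is strictly negative for $C_g<\eps$, so that $u_{n-1}$ stays, for all large $n$, below a tempered, $\eta$-independent radius $\widetilde R(\omega)$, uniformly over $\eta$ in a tempered ball.

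Finally, since $\|\varphi(nr,\theta_{-nr}\omega)\eta\|_{\beta_0,[-r,0]}\le\|y\|_{\beta,\Delta_{n-1}}=u_{n-1}$, the closed $\cC^{\beta}([-r,0],\R^d)$-ball $\mathcal{B}(\omega)$ of radius $\widetilde R(\omega)+1$ absorbs in the pullback sense every tempered set; it is a tempered random set, and by compactness of the embedding $\cC^{\beta}([-r,0],\R^d)\hookrightarrow\cC^{0,\beta_0}([-r,0],\R^d)$ together with lower semicontinuity of the $\beta$-Hölder seminorm under $\cC^{0,\beta_0}$-convergence, $\mathcal{B}(\omega)$ is compact in $\cC^{0,\beta_0}$ and remains inside $\cC^{\beta}$. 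The abstract theorem then produces the random pullback attractor via \eqref{at}, and $\mathcal{A}(x)\subset\mathcal{B}(x)\subset\cC^{\beta}([-r,0],\R^d)$, which is the assertion.
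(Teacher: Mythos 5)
Your proposal is correct in outline and follows the same overall architecture as the paper (merge Lemmas~\ref{lem.yinfty} and \ref{est.ybeta} into one recursion for $\|y\|_{\beta,\Delta_n}$, control the first interval by \eqref{ybeta0r}, pull back along $\theta_{-nr}x$, average with the ergodic theorem under \eqref{Gamma}, build a tempered absorbing ball that is compact in $\cC^{0,\beta_0}([-r,0],\R^d)$ by the compact embedding, and invoke \cite[Theorem 3.5]{flandolibjorn}), but it diverges at the crucial step of resolving the recursion with random unbounded coefficients. The paper does not introduce a second layer of stopping times on the lattice $\{\Delta_n\}$: it substitutes the recurrence \eqref{ykk} of Proposition~\ref{esty1} to bound $\|y\|_{\beta,\Delta_{k+1}}$ by $\|y\|_{\beta,\Delta_k}$, which turns \eqref{yinfbetan} into a one-sided inequality with coefficients $C_gM_7\,G(x,\Delta_{k+1})$, applies the discrete Gronwall Lemma~\ref{gronwall} to get the product $\prod_k\bigl(1+C_gM_7G\bigr)$ in \eqref{yinfbetan3}, and then uses Birkhoff's theorem together with $\log(1+ae^b)\le a+b$ and dominated convergence to show that $\hat G=\E\log\bigl(1+C_gM_7G(x,[0,r])\bigr)\to 0$ as $C_g\to 0$, so that $\hat G<\lambda_0 r$ for $C_g<\eps$; the absorbing radius $b(x)$ is then explicit and its temperedness is checked via Lemmas~\ref{lem.tempered1}--\ref{lem.tempered2}. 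Your threshold-and-block scheme splits the smallness between $\mu_0$ and $C_g$ and can be closed, but two points in your sketch need repair: (a) the per-block contraction is not $e^{-\lambda_0(\mbox{\scriptsize block length})r}\bigl(1+O(C_g\mu_0)\bigr)$, because restarting Lemmas~\ref{lem.yinfty}--\ref{est.ybeta} at each block boundary reintroduces the fixed constant $M_6=M_2+M_4>1$ in front of the initial term, so the correct factor is $M_6e^{-\lambda_0(\mbox{\scriptsize block length})r}\bigl(1+O(C_g\mu_0)\bigr)$; and (b) consequently your net rate must also contain $\log M_6$ times the block frequency, which is of order $\E\ltn Z\rtn_{\nu,[0,r]}/\mu_0+\bP\bigl(\ltn Z\rtn_{\nu,[0,r]}>\mu_0\bigr)$ rather than only the tail probability. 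Both additional terms are still annihilated by choosing $\mu_0$ large first and then $C_g$ small, so your route works, at the price of extra bookkeeping (restart constants, block-frequency control, and temperedness of the accumulated inhomogeneous terms, which still requires arguments as in Lemmas~\ref{lem.tempered1}--\ref{lem.tempered2}); the paper's substitution-plus-Gronwall route avoids restarts altogether, makes the smallness of $C_g$ enter through the single quantity $\hat G$, and delivers the explicit radius $b(x)$ used again in Theorems~\ref{thm.bound} and \ref{lin.attractor}.
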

\begin{proof}
As noticed before, the equation \eqref{SDDE} is understood in the path-wise sense with Riemann-Stieltjes integration and Young integration. We consider the deterministic equation \eqref{YDDE}
\[
dy(t) = [Ay(t)+ f(y_t)]dt + g(y_t)dx(t),\quad y_0=\eta\in \cC^{0,\beta_0}([-r,0],\R^d),
\]
which is a representative path-wise equation of  the stochastic equation \eqref{SDDE}. With a little abuse of notation, we will denote by $y(\cdot)$ the solution of system \eqref{SDDE} and also of system \eqref{YDDE}.

Notice that the condition $C_AC_f<\lambda e^{-\lambda r}$ is equivalent to the condition $\lambda_0=\lambda-L>0$.
Put  $M_5:=M_1+M_3$ and $M_6:= M_2+M_4$. 
Due to \eqref{yinftyn} and \eqref{ybetan}, we obtain for any $n\geq 1$ 
\begin{eqnarray}\label{yinfbetan}
\| y\|_{\beta,\Delta_n} 
&\leq& M_6e^{-\lambda_0 nr}   \|y\|_{\infty,[0,r]} +M_6(\|f(0)\|\vee \|g(0)\|)  \sum_{k=0}^{n-1} (1+ \ltn x\rtn_{\nu,\Delta_{k+1}})e^{-\lambda_0 (n-k)r}\notag\\
&&+\; C_gM_5  \sum_{k=0}^{n-1} \ltn x\rtn_{\nu,\Delta_{k+1}}e^{-\lambda_0 (n-k)r}\left( \|y\|_{\beta,\Delta_{k}}+ \|y\|_{\beta,\Delta_{k+1}}\right).
\end{eqnarray}
Now, we apply Proposition~\ref{esty1}. Assign for $[a,b]\subset \R$, 
\[
 F(x,[a,b]):= 
1+[2(K+1)C_g(b-a)^\nu]^{\frac{1}{\nu-\beta} } \ltn  x\rtn^{\frac{1}{\nu-\beta}}_{\nu,[a,b]}.
\]
The estimate \eqref{ykk} of Proposition~\ref{esty1} then has the form
\begin{eqnarray*}
	\|y\|_{\beta,\Delta_k} \leq e^{4C_fr+\kappa N_{k}(x)} \left[\|y\|_{\beta,\Delta_{k-1}} +\left(4r\|f(0)\|+\frac{\|g(0)\|}{C_g}\right)\right]  - \left(4r\|f(0)\|+\frac{\|g(0)\|}{C_g}\right),
\end{eqnarray*}
where $\kappa=4L_fr+2$, and $N_k(x)$ is the counting function of stopping times of the greedy sequence on $\Delta_k$ described in the proof of Proposition~\ref{esty1}. By \eqref{Nn}, $N_k(x)  \leq F(  x,\Delta_k)$. Hence
\begin{eqnarray*}
	\|y\|_{\beta,\Delta_k} + \|y\|_{\beta,\Delta_{k+1}} \leq  \Big( 1+e^{4C_fr+\kappa N_{k+1}(x)}\Big) \|y\|_{\beta,\Delta_{k}}+ e^{4C_fr+\kappa N_{k+1}(x)}\left(4r\|f(0)\|+\frac{\|g(0)\|}{C_g}\right).
\end{eqnarray*}
Together with \eqref{yinfbetan}, this yields
\allowdisplaybreaks
\begin{eqnarray}
&&e^{\lambda_0 nr}\| y\|_{\beta,\Delta_n}\notag \\
&\leq& M_6  \|y\|_{\infty,[0,r]}+M_6(\|f(0)\|\vee \|g(0)\|)  \sum_{k=0}^{n-1} (1+ \ltn x\rtn_{\nu,\Delta_{k+1}})e^{\lambda_0 kr}\notag\\
&&+\;C_gM_5  \sum_{k=0}^{n-1} \ltn x\rtn_{\nu,\Delta_{k+1}}e^{\lambda_0 kr}\Big[ \Big( 1+e^{4C_fr+\kappa N_{k+1}(x)}\Big) \|y\|_{\beta,\Delta_{k}}+ e^{4C_fr+\kappa N_{k+1}(x)}\Big(4r\|f(0)\|+\frac{\|g(0)\|}{C_g}\Big)\Big] \notag\\
&&\leq \;  M_6  \|y\|_{\infty,[0,r]}+ M_8(\|f(0)\|\vee \|g(0)\|)  \sum_{k=0}^{n-1} e^{\lambda_0 kr}(1+ \ltn x\rtn_{\nu,\Delta_{k+1}})e^{\kappa F(x,\Delta_{k+1})}\notag\\
&&\hspace{2cm}+\; C_gM_7  \sum_{k=0}^{n-1} \ltn x\rtn_{\nu,\Delta_{k+1}}\left(1+ e^{\kappa F(x,\Delta_{k+1})}\right)e^{\lambda_0 kr}\|y\|_{\beta,\Delta_{k}}\notag\\
&&\leq \;  M_8   \|y\|_{\infty,[0,r]}+ M_8(\|f(0)\|\vee \|g(0)\|)  \sum_{k=0}^{n-1} e^{\lambda_0 kr}H(x,\Delta_{k+1})\notag\\
&&\hspace{2cm}+\; C_gM_7  \sum_{k=0}^{n-1} G(x,\Delta_{k+1})e^{\lambda_0 kr}\|y\|_{\beta,\Delta_{k}},
\label{yinfbetan1}
\end{eqnarray}
where we used the notations
\begin{eqnarray}
M_7&:=&M_5e^{4C_f r},\\
M_8&:=& 1+M_6 + M_5e^{4C_f r}(4C_gr+1),\\
 G(x,[a,b])&:=& \ltn x\rtn_{\nu,[a,b]} \left(1+ e^{\kappa F(x,[a,b])}\right),\\
  H(x,[a,b]) &:=&(1+ \ltn x\rtn_{\nu,[a,b]})e^{\kappa F(x,[a,b])}.
\end{eqnarray}
Due to the discrete Gronwall Lemma~\ref{gronwall}, we derive from \eqref{yinfbetan1} that
\begin{eqnarray}\label{yinfbetan3}
e^{\lambda_0 nr}\| y\|_{\beta,\Delta_n} 
&\leq& M_8\ \|y\|_{\beta,\Delta_0} \prod_{k=0}^{n-1}\left[1+C_gM_7 G(x,\Delta_{k+1})\right] \notag\\
&&\hspace{-1cm}+\; M_8(\|f(0)\|\vee \|g(0)\|)   \sum_{k=0}^{n-1}e^{\lambda_0 kr} H(x,\Delta_{k+1}) \prod_{j=k+1}^{n-1}\left[1+C_gM_7 G(x,\Delta_{j+1})\right].
\end{eqnarray}
From the construction of the random dynamical system generated by \eqref{SDDE} in Section~\ref{sec.genRDS}, it follows that $G(x,\Delta_k) = G(\theta_{kr}x, [0,r])$ and $H(x,\Delta_k) = H(\theta_{kr}x, [0,r])$. Hence, by writing the solution of \eqref{SDDE} in full form $y(\cdot,x,\eta)$ indicating the dependence on the driving path $x$ and the initial condition $\eta$, we obtain
 \begin{eqnarray}
e^{\lambda_0 nr}\| y(\cdot,x,\eta)\|_{\beta,\Delta_n} 
&\leq&  M_8\ \|y\|_{\beta,\Delta_0}\prod_{k=0}^{n-1}\left[1+C_gM_7 G(\theta_{(k+1)r}x,[0,r])\right] \notag\\
&&\hspace{-5cm}+\; M_8(\|f(0)\|\vee \|g(0)\|)   \sum_{k=0}^{n-1} e^{\lambda_0 kr} H(\theta_{(k+1)r}x,[0,r]) \prod_{j=k+1}^{n-1}\left[1+C_gM_7 G(\theta_{(j+1)r}x,[0,r])\right].\label{eq.pullback}
\end{eqnarray}
By \eqref{ybeta0r}, there exists a positive constant $M_{9}$ independent of $n$ such that
\[  
M_8\ \|y\|_{\beta,\Delta_0}
  \leq 
 M_{9} \Big( 1+ \ltn x \rtn_{\nu,[0,r]}\Big) \Big(1+\|\eta\|_{\beta_0,[-r,0]} \Big) e^{\kappa N_0(x)}.
\] 
Put $F_0(x,[0,r]):=1+[2(K_0+1)C_gr^\nu]^{\frac{1}{\nu-\beta_0} } \ltn  x\rtn^{\frac{1}{\nu-\beta_0}}_{\nu,[0,r]}$. By \eqref{Nk0} we have $N_0(x) \leq F_0(x,[0,r])$.
 Now, replacing $x$ by $\theta_{-(n+1)r}x$ in \eqref{eq.pullback} yields
 \begin{eqnarray}\label{y}
  \|y(\cdot,\theta_{-(n+1)r}x,\eta)\|_{\beta,\Delta_{n}}
 &\leq &M_{9} \Big(1+\|\eta\|_{\beta_0,[-r,0]} \Big) \Big( 1+ \ltn \theta_{-(n+1)r} x \rtn_{\nu,[0,r]}\Big) e^{\kappa F_0(\theta_{-(n+1)r}x,[0,r])}e^{-\lambda_0 nr}\times\notag\\
&&\hspace{2cm} \times   \prod_{k=1}^{n}\Big[1+  C_gM_7G(\theta_{-kr}x,[0,r])\Big] +\notag\\
  &&\hspace{-4cm}+\; M_8(\|f(0)\|\vee \|g(0)\|)  \sum_{k=1}^{n}  e^{-\lambda_0 kr} H(\theta_{-kr}x,[0,r])\prod_{i=1}^{k-1}\Big(1+C_gM_7G(\theta_{-ir} x,[0,r])\Big).
 \end{eqnarray} 
On the other hand, by using the inequality $\log (1+ae^b)\leq a+b$ for $a,b>0$ we can show that
\begin{eqnarray}\label{lebesguebound}
\log \Big(1+C_gM_7G(x,[0,r])\Big)&=&\log \Big(1+C_gM_7\ltn x\rtn_{\nu,[0,r]} \left(1+ e^{\kappa F(x,[0,r])}\right)\Big) \notag\\
						&\leq& 2C_gM_7\ltn x\rtn_{\nu,[0,r]} +\kappa F(x,[0,r]) \notag\\ 
						&\leq& \kappa+2C_gM_7\ltn x\rtn_{\nu,[0,r]} + \kappa [4(K+1)C_gr^\nu]^{\frac{1}{\nu-\beta} } \ltn  x\rtn^{\frac{1}{\nu-\beta}}_{\nu,[0,r]}.
\end{eqnarray}
Therefore $\log \Big[1+C_gM_7G(x,[0,r])\Big]$ is integrable due to the condition \eqref{Gamma}. Put 
\begin{equation}\label{meanvalueG}
\hat{G} = E\log \Big[1+C_gM_7G(x,[0,r])\Big].
\end{equation}
 Due to Birkhoff ergodic theorem, the following equality holds almost surely
\begin{eqnarray*}
		\limsup \limits_{n \to \infty}\frac{1}{n}\log \prod_{k=1}^{n} \Big[1+C_gM_7G(\theta_{-kr}x,[0,r])\Big] &=&\limsup \limits_{n \to \infty}\frac{1}{n}\sum_{k=1}^{n}\log \Big[1+C_gM_7G(\theta_{kr}x,[0,r])\Big]  \\
		&=& \hat{G}.
	\end{eqnarray*}
Similarly, one can show that $\log H$ is integrable. Furthermore, $F_0$ is integrable due to the fact that $\beta_0<\beta$, hence $\log {\tilde F}_0(x,[0,r])$ is integrable, where  ${\tilde F}_0(x,[0,r]):= \Big( 1+ \ltn x \rtn_{\nu,[0,r]}\Big) e^{\kappa F_0(x,[0,r])}$. Therefore, by the temperedness of  integrable random variables (see Arnold~\cite[Proposition 4.1.3, p.\ 165]{arnold}) the following equalities hold almost surely
\begin{eqnarray*}
		\limsup \limits_{n \to \infty}\frac{\log H(\theta_{nr} x,[0,r])}{n} = \limsup \limits_{n \to \infty}\frac{\log H(\theta_{-nr}x,[0,r])}{n} = 0 
\end{eqnarray*}
	and
\begin{eqnarray*}
		\limsup \limits_{n \to \infty}\frac{\log {\tilde F}_0(\theta_{-nr} x,[0,r])}{n} = 0. 
	\end{eqnarray*}
	Observe that $\log (1+C_gM_7G(x,[0,r]))$, as a function of $C_g$, converges pointwise to zero as $C_g$ tends to zero. Due to  \eqref{lebesguebound} and Lebesgue's dominated convergence theorem, the value $\hat{G}$ also converges to zero as $C_g$ tends to zero. Therefore there exists $\eps>0$ such that if $C_g < \eps$ then  $\hat{G} <\lambda_0 r$.  Fix $0<2\delta < \lambda_0r- \hat{G}$; there exists $n_0=n_0(\delta, x)$ such that for all $n\geq n_0$,
	\[
	e^{(-\delta+\hat{G})n}\leq  \prod_{k=1}^{n} \Big[1+C_gM_7G( \theta_{-kr}x,[0,r])\Big], \;\;  \prod_{k=1}^{n} \Big[1+C_gM_7G(\theta_{kr}x,[0,r])\Big]\leq e^{(\delta+\hat{G})n}
	\]
	and
	\[
	e^{-\delta n}\leq {\tilde F}_0(\theta_{-nr}x,[0,r]), H(\theta_{-nr}x,[0,r]),\;\; H( \theta_{nr}x,[0,r]) \leq e^{\delta n}.
	\]
	Consequently, from \eqref{y} it follows that for all $n\geq n_0$ 
	\begin{eqnarray}\label{y1}
  \|y(\cdot,\theta_{-(n+1)r}x,\eta)\|_{\beta,\Delta_{n}}
 &\leq &M_{9} \Big(1+\|\eta\|_{\beta_0,[-r,0]} \Big) e^{-\lambda_0 nr}  e^{(2\delta+\hat{G})n} \notag\\
  &&\hspace{-4cm}+\;M_8(\|f(0)\|\vee \|g(0)\|) \sum_{k=1}^{\infty}  e^{-\lambda_0 kr} H(\theta_{-kr}x,[0,r])\prod_{i=1}^{k}\Big(1+C_gM_7G(\theta_{-ir} x,[0,r])\Big).
 \end{eqnarray}

Now using the condition \eqref{Gamma} and following the arguments in \cite[Theorem \ 3.5]{duchong19}, we can prove that there exists a positive number $\eps$ and a positive tempered random variable $b(x)$ such that under condition $C_g<\eps$, there exits for any tempered compact random set $\hat {D}(\cdot)\in \cD$ a time $t(x,\hat{D})>0$ such that for all  $t\geq t(x,\hat{D})$ and all $\eta\in \hat {D}(\theta_{-t}x)$ the solution satisfies
\[
\|y(\cdot,\theta_{-t}x,\eta)\|_{\beta,[t-r,t]}\leq b(x).
\]
In fact, one may choose 
\[
b(x) := 1 + M_8(\|f(0)\|\vee \|g(0)\|) \sum_{k=1}^{\infty}  e^{-\lambda_0 kr} H(\theta_{-kr}x,[0,r])\prod_{i=1}^{k}\Big(1+C_gM_7G(\theta_{-ir} x,[0,r])\Big),
\]
and the temperedness of $b(\cdot)$ is proved in \cite{duchong19}.
For convenience of the reader we give an improved proof of temperedness of  $b(\cdot)$ which is based on Lemmas \ref{lem.tempered1} and \ref{lem.tempered2} and is shorter than that of \cite{duchong19}. Namely, since $0< 2\delta < \lambda_0r-\hat{G}$, it follows that
\[
b(x) - 1 \leq
\Big[M_8(\|f(0)\|\vee \|g(0)\|) \sum_{k=1}^{\infty}  e^{-\delta k}H(\theta_{-kr}x,[0,r])\Big] 
\Big[ \sum_{k=1}^{\infty}  e^{-(\hat{G} +\delta) k}\prod_{i=1}^{k}\Big(1+C_gM_7G(\theta_{-ir} x,[0,r])\Big)\Big].
\]
The first multiplier in the right-hand side is tempered due to Lemma~\ref{lem.tempered2}$(ii)$; the second multiplier is tempered due to Lemma~\ref{lem.tempered2}$(i)$. Consequently, the function in the right-hand side is tempered due to Lemma~\ref{lem.tempered1}$(i)$. This implies that $b$ is tempered because of 
Lemma~\ref{lem.tempered1}$(ii)$.

 Note that, for each $x\in \cC^{0,\nu}_0(\R,\R^m)$ in the canonical representation space of $Z$,  the closed ball $\mathcal{B} (x) =\{\eta\in \cC^{0,\beta_0}([-r,0],\R^d)\mid\ \|\eta\|_{\beta,[-r,0]}\leq b(x)\}$ is compact in $\cC^{0,\beta_0}([-r,0],\R^d)$. Thus we proved that there exists a compact absorbing  random set $\mathcal{B} (x)$ with respect to the universe of tempered compact random sets. Moreover,  $\mathcal{B} (x)$ is a subset of $\cC^{\beta}([-r,0],\R^d)$.  Therefore, $\varphi$ possesses a random pullback attractor $\mathcal{A} (x) \subset \mathcal{B} (x)$ (see  \cite[Theorem 2.4]{GAKLBSch2010},  \cite[Theorem\ 3.5]{flandolibjorn}). Clearly, $\mathcal{A} (x) \subset \cC^{\beta}([-r,0],\R^d) \subset \cC^{0,\beta_0}([-r,0],\R^d)$.
 \end{proof}

The inequality \eqref{y1} provides us with a tool to make further conclusions on the dynamics of the random system generated by \eqref{SDDE} in case we have some additional information on the coefficient functions $f$ and $g$ as the following corollary shows. Recall from \cite{crauelscheutzow} that a {\em random forward attractor} is defined in a similar manner as the random pullback attractor given in Definition~\ref{dfn.attractor}, namely we replace the pullback attraction condition $(iii)$ of Definition~\ref{dfn.attractor} by the forward attraction one, i.e.\ for every $\hat{D} \in \cD$,
$\lim \limits_{t \to \infty} d(\varphi(t,x)
\hat{D}(x)| \mathcal{A} (\theta_t x)) = 0$, $\bP$-a.s.

\begin{corollary}\label{col4.6}
Assume that the conditions in Theorem \ref{attractor} are satisfied and, in addition, $f(0) = g(0)=0$. Then there exists $\epsilon>0$ such that for $C_g < \epsilon$ the random pullback attractor of the system \eqref{SDDE} provided by Theorem \ref{attractor} is the set $\mathcal{A}(x) = \{0\}$ which is both the random pullback and random forward attractor of the system \eqref{SDDE}.
\end{corollary}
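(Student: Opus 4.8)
The plan is to exploit that, when $f(0)=g(0)=0$, the equation \eqref{YDDE} is solved by $y\equiv 0$ started from the zero initial segment (since $A0=0$ too), so by uniqueness $\varphi(t,x,0)=0$ for every $t\ge 0$ and every $x$. Hence $\{0\}$ is a compact, strictly $\varphi$-invariant random set, and it lies in the absorbing set: in the formula for $b(x)$ from the proof of Theorem~\ref{attractor} the source term carries the factor $\|f(0)\|\vee\|g(0)\|=0$, so $b(x)\equiv 1$ and $\mathcal{B}(x)=\{\eta\in\cC^{0,\beta_0}([-r,0],\R^d):\|\eta\|_{\beta,[-r,0]}\le 1\}$ is deterministic.

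\textbf{Identification of the attractor.} Reading \eqref{y1} with $f(0)=g(0)=0$, the entire second summand disappears and I obtain, for all $n\ge n_0(\delta,x)$,
\[
\|y(\cdot,\theta_{-(n+1)r}x,\eta)\|_{\beta,\Delta_n}\ \le\ M_{9}\bigl(1+\|\eta\|_{\beta_0,[-r,0]}\bigr)\,e^{(-\lambda_0 r+2\delta+\hat{G})n}.
\]
The $\epsilon$ of Theorem~\ref{attractor} is chosen precisely so that $\hat{G}<\lambda_0 r$ and $0<2\delta<\lambda_0 r-\hat{G}$, so this exponent is negative. Since the attractor is strictly invariant, $\mathcal{A}(x)=\varphi\bigl((n+1)r,\theta_{-(n+1)r}x\bigr)\mathcal{A}(\theta_{-(n+1)r}x)$ with $\mathcal{A}(\theta_{-(n+1)r}x)\subseteq\mathcal{B}$, hence every $z\in\mathcal{A}(x)$ satisfies $\|z\|_{\beta,[-r,0]}\le 2M_{9}\,e^{(-\lambda_0 r+2\delta+\hat{G})n}$ for all $n\ge n_0$, which forces $z=0$; since $\varphi(t,\theta_{-t}x)0=0$ and $0\in\mathcal{B}$, formula \eqref{at} gives $0\in\mathcal{A}(x)$, so $\mathcal{A}(x)=\{0\}$. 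The same estimate, together with the gap-filling bound \eqref{ybeta0r} (shifted by $\theta_{nr}$) and $\|\cdot\|_{\beta_0,[-r,0]}\le C\|\cdot\|_{\beta,[-r,0]}$, re-confirms $d\bigl(\varphi(t,\theta_{-t}x)\hat D(\theta_{-t}x)\,\big|\,\{0\}\bigr)\to 0$ for every tempered $\hat D\in\cD$.

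\textbf{Forward attraction.} Here I would instead start from \eqref{eq.pullback}, which for $f(0)=g(0)=0$ reduces to
\[
\|y(\cdot,x,\eta)\|_{\beta,\Delta_n}\ \le\ M_8\,\|y\|_{\beta,\Delta_0}\,e^{-\lambda_0 nr}\prod_{k=0}^{n-1}\bigl[1+C_gM_7\,G(\theta_{(k+1)r}x,[0,r])\bigr].
\]
The random variable $\log[1+C_gM_7G(x,[0,r])]$ is integrable by \eqref{lebesguebound} and \eqref{Gamma}, so the same Birkhoff argument as in the proof of Theorem~\ref{attractor} gives $\frac1n\sum_{k=1}^{n}\log[1+C_gM_7G(\theta_{kr}x,[0,r])]\to\hat{G}$ almost surely; hence for $n\ge n_1(\delta,x)$ the product is $\le e^{(\hat{G}+\delta)n}$ and $\|y(\cdot,x,\eta)\|_{\beta,\Delta_n}\le M_8\|y\|_{\beta,\Delta_0}\,e^{(-\lambda_0 r+\hat{G}+\delta)n}\to 0$ as $n\to\infty$. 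By \eqref{ybeta0r}, $\|y\|_{\beta,\Delta_0}$ is bounded by an a.s.\ finite quantity depending only on $x$ and on $\sup\{\|\eta\|_{\beta_0,[-r,0]}:\eta\in\hat D(x)\}<\infty$, so this yields $d\bigl(\varphi(t,x)\hat D(x)\,\big|\,\{0\}\bigr)\to 0$ a.s.\ for every tempered $\hat D\in\cD$, i.e.\ forward attraction to $\mathcal{A}(\theta_t x)=\{0\}$.

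I expect the forward-attraction step to be the main obstacle: unlike the pullback estimate it rests on the \emph{forward} ergodic average, and one must check that the transient contribution over $[0,n_1(x)r]$---entirely hidden in $\|y\|_{\beta,\Delta_0}$, which by \eqref{ybeta0r} grows like $e^{D\ltn x\rtn^{1/(\nu-\beta_0)}_{\nu,[0,r]}}$---is an a.s.\ finite random constant that does not spoil the exponential decay. The passages from the discrete grid $\{nr\}$ to arbitrary $t$ and from the $\beta$-norm back to the phase-space $\beta_0$-norm are routine given the estimates of Section~\ref{solutionsec}.
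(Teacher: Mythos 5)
Your proposal is correct and follows essentially the same route as the paper: since $f(0)=g(0)=0$ the origin is a fixed point, the inhomogeneous terms in the key estimates vanish, and the pullback bound \eqref{y1} together with the forward bound \eqref{eq.pullback}/\eqref{yinfbetan3} (with the forward Birkhoff average of $\log[1+C_gM_7G(\cdot,[0,r])]$) give exponential pullback and forward convergence to $\{0\}$, which is exactly the paper's argument. Your extra details (invariance plus the absorbing ball $b(x)\equiv 1$ to identify $\mathcal{A}(x)=\{0\}$, and the handling of the transient factor $\|y\|_{\beta,\Delta_0}$) only flesh out steps the paper leaves implicit.
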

\begin{proof}
Clearly the origin is a fixed point of the system \eqref{SDDE}, hence an invariant compact random invariant set of the system \eqref{SDDE}. By \eqref{y1} and the assumptions of Theorem \ref{attractor},  any solution of \eqref{SDDE} tends to the origin exponentially in the pullback sense, hence the set $\mathcal{A}(x) = \{0\}$ is the random pullback attractor of \eqref{SDDE} provided by Theorem \ref{attractor}. Similarly, by \eqref{yinfbetan3}  and the assumptions of Theorem \ref{attractor},  any solution of \eqref{SDDE} tends to the origin exponentially in the forward sense, hence $\mathcal{A}(x) = \{0\}$ attracts tempered compact random sets in the forward sense. Thus $\mathcal{A}(x) = \{0\}$ is also a random forward attractor of  \eqref{SDDE}.
\end{proof}
\medskip

The following theorem asserts that, in case $g$ is bounded, the existence of the random pullback attractor is ensured without further assumption on $C_g$. 

\begin{theorem}\label{thm.bound}
	Consider the system \eqref{SDDE}
\[
	dy(t) = [Ay(t)+ f(y_t)]dt + g(y_t)dZ(t),\quad y_0=\eta\in \cC^{0,\beta_0}([-r,0],\R^d).
\]	
Assume that the conditions  $\bf {H_1, H_2,H_3}$ hold, and additionally,
	\[
	C_AC_f<\lambda e^{-\lambda r}.
	\] 
	Assume furthermore that $g$ is bounded, i.e.\ $\sup_{\eta\in \cC^{0,\beta_0}([-r,0],\R^d)} \|g(\eta)\| <\infty$. Then  the generated random dynamical system of \eqref{SDDE} possesses a random pullback attractor $\mathcal{A}(x)$ which is in $\cC^{\beta}([-r,0],\R^d) \subset \cC^{0,\beta_0}([-r,0],\R^d)$.
\end{theorem}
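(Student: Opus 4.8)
The plan is to re-run the proof of Theorem~\ref{attractor}, isolating the single place where the smallness of $C_g$ was used and showing that boundedness of $g$ makes it unnecessary. Recall that smallness of $C_g$ entered only to guarantee $\hat G = E\log[1+C_gM_7G(x,[0,r])]<\lambda_0r$, and that the obstruction to this for large $C_g$ is the factor $e^{\kappa F(x,[0,r])}$ sitting inside $G$; this factor is inherited from Proposition~\ref{esty1}, where the recurrence multiplier $e^{4L_fr+\kappa N_{k+1}(x)}$ and the inhomogeneous term $4r\|f(0)\|+\|g(0)\|/C_g$ were produced by controlling the Young integral $\int g(y_u)\,dx(u)$ through the linear-growth estimate $\|g(y_u)\|\le C_g\|y_u\|_{\infty,[-r,0]}+\|g(0)\|$.

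Write $\bar g:=\sup_{\eta\in\cC^{0,\beta_0}}\|g(\eta)\|<\infty$. I would first re-derive Lemma~\ref{lem.yinfty}, Lemma~\ref{est.ybeta} and (an analogue of) Proposition~\ref{esty1} using, in every application of the Young--Loeve estimate \eqref{yloeve}, the uniform bound $\|g(y_u)\|\le\bar g$ in place of the linear-growth bound wherever only the supremum norm of $g(y_\cdot)$ appears; the $\beta$-Hölder seminorm still enters through $\ltn g(y_\cdot)\rtn_{\beta}\le C_g\ltn y\rtn_{\beta}$ (Proposition~\ref{esth1}(ii)), but only multiplied by $(t-s)^{\beta+\nu}\ltn x\rtn_\nu$. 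This has two effects. First, the constant $\|g(0)\|/C_g$ disappears (it becomes a multiple of $\bar g$), so that $M_8$ and the whole inhomogeneous data get controlled by $(\|f(0)\|\vee\bar g)$ with $C_g$-independent constants. Second, the greedy sequence used to absorb the $C_g\ltn y\rtn_{\beta,[a,b]}$ self-feedback in the analogue of Proposition~\ref{esty1} can be organised so that the feedback coefficient multiplying $\|y\|_{\beta,\Delta_k}$ in the recurrence analogous to \eqref{yinfbetan1} is a random variable $\bar G(x,\Delta_{k+1})$ with integrable logarithm (by \eqref{Gamma}), for which one aims to prove $E\log[1+\bar G(x,[0,r])]<\lambda_0r$ using only the standing hypothesis $C_AC_f<\lambda e^{-\lambda r}$ (i.e.\ $\lambda_0>0$), with no further restriction on $C_g$ or $\bar g$.

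With these estimates replacing \eqref{yinfty}, \eqref{ybetan}, \eqref{ybeta0r} and Proposition~\ref{esty1}, the remainder of the proof of Theorem~\ref{attractor} carries over verbatim: the discrete Gronwall Lemma~\ref{gronwall} yields the analogue of \eqref{eq.pullback}, Birkhoff's ergodic theorem together with temperedness of integrable random variables gives the analogue of \eqref{y1}, and one reads off a tempered random variable $b(x)$ such that $\mathcal{B}(x)=\{\eta:\|\eta\|_{\beta,[-r,0]}\le b(x)\}$ is a compact absorbing set in $\cC^{0,\beta_0}$ contained in $\cC^\beta$; temperedness of $b(x)$ follows as before from Lemmas~\ref{lem.tempered1},~\ref{lem.tempered2}, and \cite[Theorem 2.4]{GAKLBSch2010}, \cite[Theorem 3.5]{flandolibjorn} then produce the random pullback attractor $\mathcal{A}(x)\subset\mathcal{B}(x)\subset\cC^\beta([-r,0],\R^d)$. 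I expect the main obstacle to be precisely the middle step: checking that, after substituting the uniform bound for $g$, the greedy-sequence bookkeeping does not reintroduce a hidden dependence of the Gronwall coefficient on $C_g$ that would again force a smallness condition, so that $E\log[1+\bar G(x,[0,r])]<\lambda_0r$ genuinely holds for arbitrary $C_g$ and $\bar g$.
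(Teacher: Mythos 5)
Your plan does not close the essential gap, and the place where it breaks is exactly the step you flag as ``the main obstacle.'' Boundedness of $g$ lets you replace $\|g(y_\cdot)\|_\infty$ by $\bar g$, but it does nothing to the H\"older-seminorm feedback: in every application of the Young--Loeve estimate the term $K(t-s)^{\beta}\ltn g(y_\cdot)\rtn_{\beta}$ is still bounded only via $\ltn g(y_\cdot)\rtn_{\beta}\leq C_g\ltn y\rtn_{\beta}$, so the interval-to-interval coupling in the analogues of Lemmas~\ref{lem.yinfty} and \ref{est.ybeta} still carries the coefficient $C_g\,\ltn x\rtn_{\nu,\Delta_{k+1}}$, and the self-feedback $(K+1)C_g(b-a)^\beta\ltn y\rtn_{\beta,[a,b]}$ in \eqref{ybt} still forces the greedy-stopping-time recurrence of Proposition~\ref{esty1}, whose multiplier is at least $e^{\kappa}=e^{4L_fr+2}$ on each stopping interval and grows like $e^{\kappa N_{k+1}(x)}$ with $N_{k+1}(x)\sim C_g^{1/(\nu-\beta)}\ltn x\rtn_{\nu}^{1/(\nu-\beta)}$. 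Hence your Gronwall coefficient $\bar G(x,[0,r])$ has the same structure $C_g\ltn x\rtn_{\nu}\big(1+e^{\kappa F(x)}\big)$ as in Theorem~\ref{attractor}; its expected logarithm does not shrink when $\bar g<\infty$, and there is no reason whatsoever that $\lambda_0>0$ alone gives $E\log\big[1+\bar G(x,[0,r])\big]<\lambda_0 r$ for arbitrary $C_g$ (note $\lambda_0 r\leq \lambda r$ while the exponent already contains $\kappa=4(\|A\|+C_f)r+2$). In Theorem~\ref{attractor} smallness of $\hat G$ was obtained precisely by letting $C_g\to 0$ via dominated convergence; that lever is unavailable here, so ``the remainder of the proof carries over verbatim'' is not justified.

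The paper's proof exploits boundedness of $g$ through a different mechanism, which is the ingredient your proposal is missing. One decomposes $y=\mu+h$, where $\mu$ solves the noise-free equation $d\mu=[A\mu+f(\mu_t)]dt$ with $\mu= y$ on $[0,r]$ (so $\|\mu\|_{\infty,[t-r,t]}$ decays like $e^{-\lambda_0(t-r)}$ times the initial data), and $h=y-\mu$ satisfies a Young equation with $h\equiv 0$ on $[0,r]$. The key estimate is $\|g(y_u)-g(y_v)\|\leq C_g\|h_u-h_v\|+(2\|g\|_\infty\vee C_g)\|\mu_u-\mu_v\|^{\beta}$, obtained by splitting according to $\|\mu_u-\mu_v\|\gtrless 1$; this is where boundedness of $g$ enters, and it makes the noise term feed back on the initial data only through the \emph{sublinear} quantity $\|\mu\|^{\beta}_{\infty,[0,r]}$. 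The recurrence for $\|h\|_{\beta,\Delta_k}$ then still grows exponentially in $k$ and in $\sum_i\ltn x\rtn^{1/(\nu-\beta)}_{\nu,\Delta_i}$, but it multiplies only $\|\mu\|^{\beta}_{\infty,[0,r]}$ plus data-independent terms; Young's inequality $ab\leq \beta a^{1/\beta}+(1-\beta)b^{1/(1-\beta)}$ converts this into $\epsilon\beta\|\mu\|_{\infty,[0,r]}$ plus a tempered random variable. Adding the exponentially decaying $\mu$-part and choosing the block length $k_0$ so large that $\gamma=\epsilon\beta+De^{-\lambda_0k_0r}<1$, one gets the block contraction \eqref{y6}--\eqref{y4}, and the absorbing set follows with no smallness condition and no ergodic-average comparison with $\lambda_0 r$ at all. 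Without this decomposition and the $\beta$-power trick, the smallness requirement on $C_g$ reappears, as your own concluding caveat anticipates.
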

\begin{proof}
	First notice that this theorem does not assume the smallness of $C_g$, hence we need to employ the boundedness of $g$ instead to prove the existence of the random pullback attractor of \eqref{SDDE}. We will make some significant modification of the proof of Theorem \ref{attractor} here. 
	
	Recall from the proof of Theorem \ref{attractor} that the deterministic equation \eqref{YDDE}
\[
	dy(t) = [Ay(t)+ f(y_t)]dt + g(y_t)dx(t),\quad y_0=\eta\in \cC^{0,\beta_0}([-r,0],\R^d),
\]	
 is a representative path-wise equation of  the stochastic equation \eqref{SDDE}, and  with a little abuse of notation we will denote by $y(\cdot)$ both the solution to \eqref{SDDE} and the solution to \eqref{YDDE}.

	Put $\|g\|_\infty := \sup_{\eta\in \cC^{0,\beta_0}([-r,0],\R^d)} \|g(\eta)\| <\infty$.  We fix $\bar{r} =k_0r$, where $k_0\in \N$ will be chosen later.  
	Let $\mu_t$ be the solution of the ordinary differential equation
	\begin{equation}\label{deterministic.equa}
	d\mu(t) = [A\mu(t) + f(\mu_t)] dt, \quad t\geq 0,
	\end{equation}
	with the initial condition $\mu (t) = y(t), \; t\in [0,r]$.  By applying estimate \eqref{nt2003} to system \eqref{deterministic.equa}, we obtain for all $t\geq r$ 
	\begin{eqnarray}\label{esti.mu}
	\|\mu\|_{\infty,[t-r,t]} &\leq&  C_A e^{\lambda r}  e^{-\lambda_0  (t-r)}  \|\mu\|_{\infty,[0,r]}+ \frac{C_A \|f(0)\| }{\lambda_0 }.
	\end{eqnarray}	 
	This implies that
	\begin{eqnarray*}
		\|\mu\|_{\infty,[t-r,t]} &\leq&  C_A e^{\lambda r} \|\mu\|_{\infty,[0,r]}+ \frac{C_A \|f(0)\| }{\lambda_0 },\quad t\geq r.
	\end{eqnarray*}	 
	Therefore, for all $2r\leq s <t$ and $v\in [-r,0]$,
	\begin{eqnarray*}
		\|\mu_t(v) -\mu_s(v) \|&\leq&\int_{s+v}^{t+v} (L_f\|\mu_u\| +\|f(0)\|) du = 
		\int_{s+v}^{t+v} (L_f\sup_{-r\leq m\leq 0}\|\mu(u+m)\| +\|f(0)\|) du\\
		&\leq& \int_{s+v}^{t+v} \left[L_f \left(C_A e^{\lambda r}   \|\mu\|_{\infty,[0,r]}+ \frac{C_A \|f(0)\| }{\lambda_0 }\right)+\|f(0)\|\right] du\\
		&=& (t-s) \left(C_AL_fe^{\lambda r} \|\mu\|_{\infty,[0,r]} +C_AL_f \frac{\|f(0)\| }{\lambda_0 }+\|f(0)\|\right)\\
		&\leq& (t-s) (C_AL_fe^{\lambda r} + \lambda_0)\left( \|\mu\|_{\infty,[0,r]} + \frac{\|f(0)\| }{\lambda_0 }\right).
	\end{eqnarray*}	 	
	Consequently, for all $2r\leq s <t$,
	\begin{equation}\label{mu1}
	\|\mu_t-\mu_s\|=\|\mu_t(\cdot)-\mu_s(\cdot)\|_{\infty,[-r,0]} \leq  (t-s) (C_AL_fe^{\lambda r} + \lambda_0)\left( \|\mu\|_{\infty,[0,r]} + \frac{\|f(0)\| }{\lambda_0 }\right).
	\end{equation}
Assign $h(t) := y(t) - \mu(t)$, then $h$ satisfies the equation 
	\[
	dh(t) = [Ah(t) + f(y_t)-f(\mu_t)] dt + g(y_t) dx(t).
	\]
By asumption $\bf H_2$ and \eqref{yloeve}, it follows that for  all $2r\leq s<t$,
	\begin{eqnarray}\label{h1}
	\|h(t)-h(s)\|&\leq & \int_s^tL_f\|h_u\|du  +(t-s)^\nu\ltn x\rtn_{\nu,[s,t]}\left[\|g\|_{\infty}  + K(t-s)^\beta \ltn g(y_{\cdot})\rtn_{\beta,[s,t]}\right].
	\end{eqnarray}
On the other hand, 
\[
	\|g(y_u)-g(y_v)\|\leq  C_g\|y_{u} -y_{v}\|
	\leq  C_g \left(\|h_{u} -h_{v}\|+\|\mu_{u} -\mu_{v}\| \right).
\]	
Observe that if $\|\mu_{u} -\mu_{v}\|\geq 1$ then  $\|g(y_u)-g(y_v)\| \leq  
	2\|g\|_{\infty}\|\mu_{u} -\mu_{v}\|^\beta$, whereas if $\|\mu_{u} -\mu_{v}\| < 1$ then 
	$\|\mu_{u} -\mu_{v}\|  \leq \|\mu_{u} -\mu_{v}\| ^\beta$. That leads to the estimate
\[
	\|g(y_u)-g(y_v)\| \leq C_g \|h_{u} -h_{v}\|+ (2\|g\|_{\infty}\vee C_g)\|\mu_{u} -\mu_{v}\|^\beta.
\]	
Together with \eqref{mu1}, we can estimate the H\"older norm of $g(y_\cdot)$ as follows
	\[
	\ltn g(y_{\cdot})\rtn_{\beta,[s,t]}\leq C_g\ltn h\rtn_{\beta,[s-r,t]} + (2\|g\|_{\infty}\vee C_g)(C_AL_fe^{\lambda r} + \lambda_0)^\beta\left( \|\mu\|^\beta_{\infty,[0,r]} + \frac{ \|f(0)\|^\beta }{\lambda^\beta_0 }\right).
	\]
	Therefore \eqref{h1} leads to
	\begin{eqnarray}\label{h2}
	&&\| h(t)-h(s)\|\notag\\
	&&\leq  \int_s^tL_f\|h_u\|du  +(t-s)^\nu\ltn x\rtn_{\nu,[s,t]}\left[L_1+L_2\|\mu\|^\beta_{\infty,[0,r]} + KC_g(t-s)^\beta \ltn h\rtn_{\beta,[s-r,t]}  \right], 	\end{eqnarray}
	for all $kr\leq s<t \leq (k+1)r$, $k\geq 2$, with $L_1= \|g\|_{\infty}+Kr^\beta(2\|g\|_{\infty}\vee C_g)(C_AL_fe^{\lambda r} + \lambda_0)^\beta\frac{ \|f(0)\|^\beta }{\lambda^\beta_0 },\;\; L_2=   Kr^\beta(2\|g\|_{\infty}\vee C_g)(C_AL_fe^{\lambda r} + \lambda_0)^\beta$.
	Note that \eqref{h2} has the form of \eqref{ytsr} but somehow simpler (we may look at \eqref{ytsr} with $\|g(0)\|$ replaced by $L_1+L_2\|\mu\|^\beta_{\infty,[0,r]}$, and $f(0)$ and two further items in \eqref{ytsr} replaced by 0).
	By repeating the arguments in Proposition \ref{esty1} on the interval $\Delta_{k}$, we obtain a similar estimate to \eqref{ykk}, namely
	\begin{eqnarray}
	\|h\|_{\beta,\Delta_{k}}&\leq &\exp\left \{ 4L_fr+ (4L_fr\vee 2) \left[1+(2KC_gr^\nu)^{\frac{1}{\nu-\beta} } \ltn  x\rtn^{\frac{1}{\nu-\beta}}_{\nu,\Delta_{k}}\right] \right\}\notag\\
	&&\times \left( \|h\|_{\beta,\Delta_{k-1}} + \frac{L_1+L_2 \|\mu\|^\beta_{\infty,[0,r]} }{C_g}\right) - \frac{L_1+L_2 \|\mu\|^\beta_{\infty,[0,r]} }{C_g}\notag\\
	&\leq &\exp\left \{ (4L_fr\vee 2) \left[2+(2KC_gr^\nu)^{\frac{1}{\nu-\beta} } \ltn  x\rtn^{\frac{1}{\nu-\beta}}_{\nu,\Delta_{k}}\right] \right\}\notag\\
	&&\times \left( \|h\|_{\beta,\Delta_{k-1}} + \frac{L_1+L_2 \|\mu\|^\beta_{\infty,[0,r]} }{C_g}\right) - \frac{L_1+L_2 \|\mu\|^\beta_{\infty,[0,r]} }{C_g}.
	\end{eqnarray}
	By induction, we can prove that, for all $k\geq 2$,
	\begin{eqnarray} \label{estimate-h}
	\|h\|_{\beta,\Delta_{k}}
	&\leq &\exp\left \{ 2(k-1)(4L_fr\vee 2) +(4L_fr\vee 2) (2KC_gr^\nu)^{\frac{1}{\nu-\beta} } \sum_{i=2}^k \ltn  x\rtn^{\frac{1}{\nu-\beta}}_{\nu,\Delta_i} \right\}\notag\\
	&&\times \left( \|h\|_{\beta,[r,2r]} + \frac{L_1+L_2 \|\mu\|^\beta_{\infty,[0,r]} }{C_g}\right) - \frac{L_1+L_2 \|\mu\|^\beta_{\infty,[0,r]} }{C_g}.
	\end{eqnarray}
	Now, in a similar manner as in the proof of Proposition \ref{esty1} we can estimate $\|h\|_{\beta,[r,2r]}$ as follow 
	\[
	\|h\|_{\beta,[r,2r]} \leq D \Big(1+\|h\|_{\beta,[0,r]} \Big)  e^{D\ltn  x\rtn^{\frac{1}{\nu-\beta}}_{\nu,[r,2r]} },
	\]
	where $D$ is some positive constant independent of $k$ and $x$. Since $ \|h\|_{\beta,[0,r]}=0$ we can write \eqref{estimate-h} in the form
	\begin{eqnarray}\label{estimate-h1}
	\|h\|_{\beta,\Delta_{k}}
	&\leq &\|\mu\|^\beta_{\infty,[0,r]} \xi_1(k,x) +\xi_2(k,x),\quad \forall  k\geq 2,
	\end{eqnarray}
	in which  $\xi_1,\xi_2$ have form $\exp\left\{D\left(k+ \sum_{i=1}^k \ltn  x\rtn^{\frac{1}{\nu-\beta}}_{\nu,\Delta_i}\right) \right\}$ for some generic positive constant $D$ independent of $k$  and $x$. 
	Applying Young's inequality $ab\leq\beta a^{\frac{1}{\beta}}+(1-\beta)b^{\frac{1}{1-\beta}}$, $\forall a,b\geq 0$, to \eqref{estimate-h1} we can finally show that
	\begin{eqnarray}\label{h3}
	\|h\|_{\beta,\Delta_{k}}
	&\leq &\epsilon \beta\|\mu\|_{\infty,[0,r]} + \left(\frac{ \xi_1(k,x)}{\epsilon} \right)^{\frac{1}{1-\beta}}+ \xi_2(k,x),\quad \forall k \geq 2,\; \epsilon >0,
	\end{eqnarray}
	where we choose and fix $\epsilon >0$ small enough such that $\epsilon\beta <1/2$.
	
	Next,   to estimate $\|\mu\|_{\beta,\Delta_k}$ we use the argument as in \eqref{yadd2}, and by virtue of \eqref{esti.mu} 
	we obtain for all $kr \leq s < t\leq (k+1)r$
	\begin{eqnarray*}
		r^\beta\frac{\|\mu(t)-\mu(s)\|}{(t-s)^\beta} &\leq &\|f(0)\|r+L_f r\|\mu\|_{\infty,[s-r,s]}+ L_f\int_s^t r^\beta\ltn \mu\rtn_{\beta,[s,u]}du \notag\\
		&\leq &\|f(0)\|r+C_AL_fe^{\lambda r} re^{-\lambda_0 (k-1)r}\|\mu\|_{\infty,[0,r]}+ L_f\int_s^t r^\beta\ltn \mu\rtn_{\beta,[s,u]}du. \notag
	\end{eqnarray*}
	Again, the Gronwall Lemma~\ref{contgronwall} applied to the function $r^\beta\ltn \mu\rtn_{\beta,[kr,\cdot]}$, and similar arguments to the proof of  Lemma~\ref{est.ybeta} help to show that
	\begin{eqnarray*}
		r^\beta\ltn \mu \rtn_{\beta,\Delta_k}
		&\leq &\|f(0)\|e^{L_fr}r+C_AL_fe^{(2\lambda +L_f)r} re^{-\lambda_0 kr}\|\mu\|_{\infty,[0,r]}.
	\end{eqnarray*}
	Together with \eqref{esti.mu} this yields
	\begin{eqnarray}\label{mu3}
	\| \mu \|_{\beta,\Delta_k}
	&\leq &D\|f(0)\|+De^{-\lambda_0 kr}\|\mu\|_{\infty,[0,r]},
	\end{eqnarray}
	where $D$ is a positive constant independent of $k$ and $x$. It then follows from \eqref{h3} and \eqref{mu3} that
	\begin{eqnarray}\label{y3}
	\| y \|_{\beta,\Delta_k}&\leq & \| h \|_{\beta,\Delta_k}+ \| \mu \|_{\beta,\Delta_k}\notag\\
	&\leq& (\epsilon \beta +De^{-\lambda_0 kr})\|\mu\|_{\infty,[0,r]} + \xi(k,x)\notag\\
	&\leq& (\epsilon \beta +De^{-\lambda_0 kr})\|y\|_{\beta,[0,r]} + \xi(k,x),
	\end{eqnarray}
	where $\xi(k,x)$ has the  form similar to that of $\xi_1,\xi_2$ above.
	We choose and fix $k_0$ large enough so that $De^{-\lambda_0 k_0r} <1/2$, then $(\epsilon \beta +De^{-\lambda_0 k_0r}) =: \gamma <1$ by the choice of $\epsilon$ and 
	\begin{eqnarray}\label{y6}
	\| y \|_{\beta,\Delta_{k_0}}&\leq & \gamma \|y\|_{\beta,[0,r]} + \xi(k_0,x).
	\end{eqnarray}
	Consequently, since our equation is autonomous we can apply the above arguments to the shifted equation and get for all $n\in\N$, $n\geq 2$, 
	\begin{eqnarray}\label{y4}
	\| y \|_{\beta,\Delta_{nk_0}}&\leq & \gamma^n \|y\|_{\beta,[0,r]} + \sum_{i=0}^{n-1} \gamma^i  \xi(k_0,\theta_{(n-i)k_0r}x).
	\end{eqnarray}
	Replacing $x$ by $\theta_{-nk_0r}x$ leads to
	\begin{eqnarray}\label{y5}
	\| y (\cdot,\theta_{-nk_0r}x,\eta)\|_{\beta,\Delta_{nk_0}}&\leq & \gamma^n \|y (\cdot,\theta_{-nk_0r}x,\eta)\|_{\beta,[0,r]} + \sum_{i=0}^{n-1} \gamma^i  \xi(k_0,\theta_{-ik_0r}x), \quad n\geq 2.
	\end{eqnarray}
	Note that $\xi$ is tempered under the assumption \eqref{Gamma}. Using the same arguments as that at the end of the proof of Theorem~\ref{attractor}, taking into account that $\gamma<1$, we can find a tempered random variable $\hat{b}(x)$ such that 
	for any tempered compact random set $\hat {D}(\cdot)\in \cD$, there exits an integer time moment $n(x,\hat{D})>0$ such that 
\[
	\| y (\cdot,\theta_{-nk_0r}x,\eta)\|_{\beta,\Delta_{nk_0}} \leq {\hat b}(x),
\]	
for all  $n\geq n(x,\hat{D})$ and all $\eta\in \hat {D}(\theta_{-nk_0r}x)$. Here we only estimate the norm of $y$ on $\Delta_{nk_0r}$,  $n\geq 1$, but it is easy to show the same estimate for the norm of $y$ on the interval $[t-r,t]$ for $t\geq k_0r$. 
	Thus we find a compact absorbing set 
\[
	\mathcal{B}(x) := \{\eta\in \cC^{0,\beta_0}([-r,0],\R^d)| \|\eta\|_{\beta,[-r,0]}\leq \hat{b}(x) \}
\]	
for the random dynamical system generated by the equation \eqref{SDDE}. Consequently, the random dynamical system generated by the equation \eqref{SDDE} possesses a random pullback attractor $\mathcal{A} (x) \subset \mathcal{B} (x)$ (see \cite[Theorem\ 3.5]{flandolibjorn}). Clearly, $\mathcal{A} (x) \subset \cC^{\beta}([-r,0],\R^d) \subset \cC^{0,\beta_0}([-r,0],\R^d)$.
\end{proof}

\begin{theorem}\label{lin.attractor}
Assume that the conditions in Theorem \ref{attractor} are satisfied and, in addition, $g$ is a linear form on $\cC_r$. Then there exists $\epsilon>0$ such that for $C_g < \epsilon$ the random pullback attractor of the system \eqref{SDDE} provided by Theorem \ref{attractor} is a singleton. Moreover, it is also a forward attractor.
\end{theorem}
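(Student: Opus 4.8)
The plan is to reduce the statement to an exponential-contraction estimate for the difference of two solutions of the path-wise equation \eqref{YDDE}: when $g$ is linear this difference obeys an equation of exactly the type analysed in Section~\ref{solutionsec}, but with the inhomogeneous data $f(0),g(0)$ replaced by $0$. The contraction makes the attractor a single trajectory, and, read forward, it gives forward attraction.

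First I would fix a driving path $x\in\cC^{0,\nu}_0(\R,\R^m)$ and two initial functions $\eta^1,\eta^2\in\cC^{0,\beta_0}([-r,0],\R^d)$, set $y^i(\cdot):=y(\cdot,x,\eta^i)$ and $z:=y^1-y^2$. Linearity of $g$ yields $g(y^1_t)-g(y^2_t)=g(z_t)$, so $z$ solves $dz(t)=[Az(t)+\psi(t)]\,dt+g(z_t)\,dx(t)$, $z_0=\eta^1-\eta^2$, with $\psi(t):=f(y^1_t)-f(y^2_t)$ and $\|\psi(t)\|\le C_f\|z_t\|_{\infty,[-r,0]}$ by ${\bf H_2}$. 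The key observation is that throughout Section~\ref{solutionsec} and the proof of Theorem~\ref{attractor} the drift enters only through Riemann integrals bounded by $\int_s^t\|Az(u)+\psi(u)\|\,du\le L_f\int_s^t\|z_u\|\,du$, and the Young--Loeve estimate is applied only to the H\"older path $g(z_\cdot)$, for which $\ltn g(z_\cdot)\rtn_\beta\le C_g\ltn z\rtn_\beta$; neither autonomy of $f$ nor any H\"older bound on $\psi$ is used. Hence Proposition~\ref{esty1}, Lemma~\ref{lem.yinfty}, Lemma~\ref{est.ybeta}, the homogeneous analogue of \eqref{ybeta0r}, and the discrete-Gronwall step leading to \eqref{yinfbetan3} all carry over verbatim to $z$ with $f(0)=g(0)=0$ and with the same constants $L_f,\kappa,\lambda_0,M_1,\dots,M_7$. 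This gives, for every $n\ge1$,
\[
e^{\lambda_0 nr}\,\|z\|_{\beta,\Delta_n}\ \le\ M\,\|\eta^1-\eta^2\|_{\beta_0,[-r,0]}\,\mathcal{E}(x)\,\prod_{k=0}^{n-1}\bigl[1+C_gM_7\,G(x,\Delta_{k+1})\bigr],
\]
where $M>0$, $G$ is as in the proof of Theorem~\ref{attractor}, and $\mathcal{E}(x):=(1+\ltn x\rtn_{\nu,[0,r]})\exp\bigl\{D\,\ltn x\rtn_{\nu,[0,r]}^{1/(\nu-\beta_0)}\bigr\}$ comes from the homogeneous version of \eqref{ybeta0r} (there is no additive ``$1$'' now, precisely because $f(0)=g(0)=0$); replacing $x$ by $\theta_{-(n+1)r}x$ gives the pullback version, with the factors $G(\theta_{-kr}x,[0,r])$, $1\le k\le n$, and $\mathcal{E}(\theta_{-(n+1)r}x)$.

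Next I would feed in the ergodic-theory input already used for Theorem~\ref{attractor}: under \eqref{Gamma}, $\log[1+C_gM_7G(\cdot,[0,r])]$ and $\log\mathcal{E}$ are integrable, so by Birkhoff's theorem $\tfrac1n\log\prod_{k}\bigl[1+C_gM_7G(\theta_{\pm kr}x,[0,r])\bigr]\to\hat G$ a.s.\ (the same $\hat G$ as in \eqref{meanvalueG}), while $\mathcal{E}(\theta_{\pm nr}x)=e^{o(n)}$ a.s. For $C_g<\eps$, with the same $\eps$ as in Theorem~\ref{attractor}, one has $\hat G<\lambda_0 r$, and the displayed estimate --- in both its forward and pullback form --- then yields, a.s.,
\[
\|z\|_{\beta,\Delta_n}\ \le\ e^{o(n)}\,\|\eta^1-\eta^2\|_{\beta_0,[-r,0]}\,e^{-(\lambda_0 r-\hat G)\,n},
\]
the $e^{o(n)}$ absorbing $\mathcal{E}$ at the relevant shift (in the forward form it is a fixed constant). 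For the singleton claim, fix $x$ in a $\theta$-invariant set of full measure; $\mathcal{A}(x)$ is a nonempty compact subset of $\mathcal{B}(x)$ by Theorem~\ref{attractor}. Given $\eta^1,\eta^2\in\mathcal{A}(x)$ and $n\ge1$, strict invariance of $\mathcal{A}$ provides $\bar\eta^i_n\in\mathcal{A}(\theta_{-(n+1)r}x)\subset\mathcal{B}(\theta_{-(n+1)r}x)$ with $\eta^i=\varphi((n+1)r,\theta_{-(n+1)r}x,\bar\eta^i_n)$; then $\|\bar\eta^1_n-\bar\eta^2_n\|_{\beta_0,[-r,0]}\le 2b(\theta_{-(n+1)r}x)=e^{o(n)}$ by temperedness of $b$, whereas $\|\eta^1-\eta^2\|_{\beta_0,[-r,0]}\le\|z\|_{\beta,\Delta_n}$ with $z$ the difference of the two solutions started from $\bar\eta^1_n,\bar\eta^2_n$ under the driving path $\theta_{-(n+1)r}x$. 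Applying the pullback estimate to this $z$ and letting $n\to\infty$, the right-hand side tends to $0$; hence $\eta^1=\eta^2$ and $\mathcal{A}(x)=\{a(x)\}$ is a singleton.

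Finally, for the forward-attractor property: by strict invariance $a(\theta_t x)=\varphi(t,x,a(x))$, so for $\hat D\in\cD$ and $\eta\in\hat D(x)$ one has $d(\varphi(t,x,\eta),a(\theta_t x))\le\|z\|_{\beta,[t-r,t]}$ with $z=y(\cdot,x,\eta)-y(\cdot,x,a(x))$; splitting $[t-r,t]$ over at most two intervals $\Delta_n$ and applying the forward estimate, together with the fact that $\|\eta-a(x)\|_{\beta_0,[-r,0]}\le\rho(x)+\|a(x)\|_{\beta_0,[-r,0]}$ is finite and independent of $\eta\in\hat D(x)$ (because $\hat D$ is tempered), gives $\sup_{\eta\in\hat D(x)}d(\varphi(t,x,\eta),a(\theta_t x))\to0$ as $t\to\infty$, a.s., i.e.\ $\mathcal{A}$ is also a random forward attractor. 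I expect the real work to be the verification in the second paragraph that none of the estimates in Proposition~\ref{esty1}, Lemma~\ref{lem.yinfty}, Lemma~\ref{est.ybeta} or the derivation of \eqref{yinfbetan3} secretly relies on autonomy of $f$ or on a H\"older estimate of the drift, so that the constants are literally unchanged and the same $\eps$ works; granting this, the remainder is a routine repetition of the machinery already developed for Theorem~\ref{attractor}.
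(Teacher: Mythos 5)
Your proposal is correct and follows essentially the same route as the paper: both exploit the linearity of $g$ so that the difference of two solutions solves a non-autonomous equation of the same type with vanishing inhomogeneity (your $\psi(t)$ is the paper's $f^*(t,\cdot)$), rerun the machinery behind Theorem~\ref{attractor} with unchanged constants to obtain the exponential pullback/forward contraction, and then combine strict invariance with temperedness of the absorbing radius and the Birkhoff limit $\hat G<\lambda_0 r$ to conclude. The only differences are presentational: you argue directly with arbitrary attractor points and a homogeneous version of \eqref{ybeta0r} for the difference, while the paper argues by contradiction with two fixed attractor points and bounds $\|y^1-y^2\|_{\beta,[0,r]}$ by the sum of the individual norms via the tempered radius $b$.
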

\begin{proof}
Suppose that there exist two distinct points $a_1(x), a_2(x) \in \mathcal{A}(x)\subset \cC^{\beta}([-r,0],\R^d)$, where $\mathcal{A}(x)$ is the random pullback attractor provided by Theorem \ref{attractor}. We show that this will lead to a contradiction. 

Fix $n\in \N$, $n\geq 2$. Put $x^*:=\theta_{-nr}x$ and consider the equation
\begin{equation}\label{equ.star}
dy(t) = [Ay(t) + f(y_t)]dt + g(y_t)dx^*(t).
\end{equation}
By the invariance principle there exist two different points $b_1=b_1(x^*), b_2=b_2(x^*) \in \mathcal{A}(x^*)$ such that
\[
a_i(x) = y_{nr}(\cdot,x^*,b_i), \quad i = 1,2,
\]
where,  $y(\cdot,x^*,b_i)$ denotes the solution of \eqref{SDDE} with the driving path $x$ replaced by $x^*$ and the initial condition $\eta$ replaced by $b_i$, and 
$y_{nr}(\cdot,x^*,b_i)$ denotes the shifted function $y(\cdot+nr,x^*,b_i)$ considered as a function on $[-r,0]$.
Put $y^1(t) :=  y(t,x^*,b_1)$, $y^2(t) := y(t,x^*,b_2)$,  $ y(t):=y^1(t)-y^2(t) =  y(t,x^*,b_1)- y(t,x^*,b_2)$. Then $y^1(t) = y(t) + y^2(t)$, $y_{nr}(\cdot) =a_1(x)- a_2(x)$
and 
\begin{eqnarray}
	dy(t)&=& [Ay(t) + f(y_t+y^2_t)-f(y^2_t)]dt + [g(y^1_t)-g(y^2_t)]dx^*(t)\notag\\
	&=& [Ay(t) + f(y_t + y^2_t)-f(y^2_t)]dt + g(y_t)dx^*(t)\notag\\
	&=:&[Ay(t) + f^*(t,y_t)]dt + g(y_t)dx^*(t).\label{norm-difference-y}
\end{eqnarray}
Now we estimate the norm of $y(\cdot)$ using \eqref{norm-difference-y} and the method of the proof of Theorem \ref{attractor}. Notice that the results of  Theorem \ref{attractor} are not applicable directly to \eqref{norm-difference-y} because \eqref{norm-difference-y} is non-autonomous. However, a careful look at the proof of  Theorem \ref{attractor} assures us that, due to the specific construction of $f^*$ from $f$, this proof can be modified to the case of a non-autonomous system \eqref{norm-difference-y} as well to get some useful intermediate estimates.
Namely, taking into account that $g$ is a linear form, we repeat the calculation in the proof of  Theorem \ref{attractor}  in which $x$ is replaced by $x^*$, $f$ is replaced by the nonautonomous function $f^*$ (notice that the constants $C_f$, $L_f$ are not changed due to the construction of $f^*$ from $f$). Since $f^*(t,0)\equiv 0,\;g(0)=0$, similar to \eqref{yinfbetan3} we obtain 
	\begin{eqnarray}
	\|y\|_{\beta, \Delta_n}&\leq & M_8e^{-\lambda_0 nr} \|y^1-y^2\|_{\beta,[0,r]} \prod_{k=0}^{n-1} \Big[1+ C_gM_7G(\theta_{(k+1)r}x^*,[0,r])\Big]\notag\\
	&\leq &M_8e^{-\lambda_0 nr} \|y^1-y^2\|_{\beta,[0,r]} \prod_{k=0}^{n-1} \Big[1+ C_gM_7G(\theta_{-kr}x,[0,r])\Big].\label{eq44a}
	\end{eqnarray}
	Therefore, 
	\begin{eqnarray}
	\|a_1(x)- a_2(x)\|_{\beta,[-r,0]}&&=\quad   \|y_{nr}(\cdot)\|_{\beta,[-r,0]}=\|y\|_{\beta, \Delta_{n-1}}\notag\\
	&& \hspace{-2cm}\leq M_8e^{-\lambda_0 (n-1)r} \|y^1-y^2\|_{\beta,[0,r]} \prod_{k=0}^{n-1} \Big[1+ C_gM_7G(\theta_{-kr}x,[0,r])\Big].\label{normy}
	\end{eqnarray}
We estimate the terms in the right-hand side of the inequality in \eqref{normy}.	Using \eqref{ybeta0r} with $x$  replaced by $x^*$ and the fact that $\mathcal{A} \subset \mathcal{B}$, where $\mathcal{B}$ is determined at the end of the proof of Theorem \ref{attractor}, we obtain
\begin{eqnarray*}
	\|y^1-y^2\|_{\beta,[0,r]}  &\leq&\|y^1\|_{\beta,[0,r]}+\|y^2\|_{\beta,[0,r]} \\
	&\leq&  D\left(1  +\ltn x^*\rtn_{\nu,[0,r]} \right) \Big(2+\|y^1\|_{\beta_0,[-r,0]}+\|y^2\|_{\beta_0,[-r,0]} \Big)  e^{D \ltn  x^*\rtn^{\frac{1}{\nu-\beta_0}}_{\nu,[0,r]} }\\
	&\leq&  D\left(1  +\ltn x^*\rtn_{\nu,[0,r]} \right) \Big(2+\|b_1\|_{\beta_0,[-r,0]}+\|b_2\|_{\beta_0,[-r,0]} \Big)  e^{D \ltn  x^*\rtn^{\frac{1}{\nu-\beta_0}}_{\nu,[0,r]} }\\[2pt]
	&\leq& 2 (1+b(x^*))  \xi(\ltn x^*\rtn_{\nu,[0,r]})\\[2pt]
	&=& 2 (1+b(\theta_{-nr}x))  \xi(\ltn \theta_{-nr}x\rtn_{\nu,[0,r]}),
	\end{eqnarray*}
	 where $b(\cdot)$ is the diameter of  $\mathcal{B}$  which is tempered, and $\xi(\cdot)$ is a tempered function similar to that of the function ${\tilde F}(\cdot)$ in the proof of Theorem~\ref{attractor}.  By assumption of contradiction  $a_1(x) \not= a_2(x)$, thus $\|a_1(x)- a_2(x)\|_{\beta,[-r,0]}$ is a positive constant (for the fixed driving path $x$), which implies $\|a_1(x)- a_2(x)\|_{\beta,[-r,0]}>0$. Let $ n \to \infty$, it follows from \eqref{normy} that
	\begin{eqnarray*}
	&&0 = \varlimsup_{n\to\infty}\frac{1}{n}\log \|a_1(x)- a_2(x)\|_{\beta,[-r,0]} \notag\\
	&& \leq -\lambda_0 r  + \varlimsup_{n\to\infty}\frac{1}{n}\log  [ (1+ b(\theta_{-nr}x) ) \xi(\ltn \theta_{-nr}x\rtn_{\nu,[0,r]})]+ \varlimsup_{n\to\infty}\frac{1}{n} \sum_{k=0}^{n-1}\log \Big[1+ C_gM_7G(\theta_{-kr}x,[0,r])\Big]\notag\\
	&&\leq -\lambda_0 r + \hat{G}<0
	\end{eqnarray*}
	 if $C_g$ is small enough, where $\hat G$ is defined by \eqref{meanvalueG}. This contradiction proves that the pullback attractor $\mathcal{A}$ is a singleton. Taking into account \eqref{eq44a}, the arguments similar to that of Corollary~\ref{col4.6} in the forward direction of convergence show that $\mathcal{A}$ is also a forward attractor. 
\end{proof}

\section{Appendix}\label{appendix}
 
\subsection*{Young integrals}

For $[a,b]\subset\R$, denote by $\cC([a,b],\R^d)$ the space of all continuous functions $y: [a,b]\to \R^d$, equipped with the sup norm
\[\|y\|_{\infty,[a,b]} = \sup_{t\in [a,b]} \|y(t)\|,
\]
in which $\|\cdot \|$ is the Euclide norm of a vector in $\R^d$. 
Also, for $0<\beta \leq 1$ denote by $\cC^{\beta}([a,b],\R^d)$ the Banach space of all H\"older continuous paths $y:[a,b]\to \R^d$ with exponential $\beta$, equipped with the norm
\begin{eqnarray}
	\|y\|_{\infty,\beta,[a,b]} &:=& \|y\|_{\infty,[a,b]} + \ltn y\rtn_{\beta,[a,b]},\ \text{where}\notag\\
	\ltn y\rtn_{\beta,[a,b]}&:=&  \sup_{a\leq s<t\leq b} \frac{\|y(t)-y(s)\|}{(t-s)^{\beta}}< \infty.
\end{eqnarray}
One can easily prove for any $a\leq s\leq t\leq u\leq b$ that
\[
\ltn y\rtn_{\beta,[s,u]} \leq \ltn y\rtn_{\beta,[s,t]}+\ltn y\rtn_{\beta,[t,u]}.
\]
Note that the space $\cC^{\beta}([a,b],\R^d)$ is not separable. However, the closure of $\cC^{\infty}([a,b],\R^d)$ denoted by $\cC^{0,\beta}([a,b],\R^d)$ is a separable space (see \cite[Theorem 5.31, p. 96]{friz}), which can be defined as
\[
\cC^{0,\beta }([a,b],\R^d) := \Big\{x \in \cC^{\beta}([a,b],\R^d) \quad | \quad\lim \limits_{h \to 0} \quad \sup_{a\leq s<t\leq b, |t-s|\leq h} \frac{\|x(t)-x(s)\|}{(t-s)^{\beta}} = 0 \Big\}.
\]
It is worth to mention that for $\beta<\alpha$, $\cC^{\alpha}([a,b],\R^d)$ is a subspace of $\cC^{0,\beta}([a,b],\R^d)$ and moreover, the embedding operator 
\[
id: \cC^{\alpha}([a,b],\R^d) \rightarrow \cC^{\beta}([a,b],\R^d)
\]
 is compact  (see \cite[Proposition 5.28, \ p. 94]{friz}).\\
Now we recall that for $y \in \cC^{\beta}([a,b],\R^{d\times k})$ and $x \in \cC^{\nu}([a,b],\R^k)$ with $\beta+\nu>1$. Then the Young integral $\int_a^by(t)dx(t)$ exists  (see \cite{young}) and satisfies the Young-Loeve estimate \cite[Theorem 6.8, p.~116]{friz}, 
\[
	\left\|\int_s^ty(u)dx(u) - y(s)[x(t)-x(s)]\right\|
	\leq K (t-s)^{\beta+\nu}\ltn x \rtn_{\nu,[s,t]} \ltn y \rtn_{\beta,[s,t]},\quad \forall a \leq s\leq t \leq b,
\]
where $K:= \frac{1}{1-2^{1-(\beta+\nu)}}$. Hence
\begin{equation}\label{yloeve}
\left\|\int_s^ty(u)dx(u)\right\| \leq (t-s)^\nu\ltn x\rtn_{\nu,[s,t]} \left(\|y(s)\|+K(t-s)^{\beta}\ltn y\rtn_{\beta,[s,t]} \right).
\end{equation}

\subsection*{Tempered variables}
Let $(\Omega,\mathcal{F},\mP)$ be a probability space equipped with an ergodic metric dynamical system $\theta$, which is a $\mP$ measurable mapping $\theta: \Bbb T \times \Omega \to \Omega$, $\mathbb T$ is either $\R$ or $\Z$, and $\theta_{t+s} = \theta_t \circ \theta_s$ for all $t,s \in \Bbb T$. Recall that  a random variable $\rho: \Omega \to [0,\infty)$ is called {\em tempered} if

\begin{equation}\label{tempered}
\lim \limits_{t \to \pm \infty} \frac{1}{t} \log^+ \rho(\theta_{t}x) =0,\quad \text{a.s.}
\end{equation}
which, as shown in \cite[p. 220]{ImkSchm01}, \cite{GAKLBSch2010}, is equivalent to the sub-exponential growth
\[
\lim \limits_{t \to \pm \infty} e^{-c |t|} \rho(\theta_{t}x) =0\quad \text{a.s.}\quad \forall c >0.
\]
Note that our definition of temperedness corresponds to the notion of {\em temperedness from above} given in \cite[Definition 4.1.1(ii)]{arnold}. 
\begin{lemma}\label{lem.tempered1}
(i) If $h_1, h_2\geq 0$ are tempered random variables then $h_1+h_2$ and $h_1h_2$ are tempered random variables.\\
(ii) If $h_1\geq 0$ is a tempered random variable, $h_2\geq 0$ is a measurable random variable and $h_2\leq h_1$ almost surely, then $h_2$ is a tempered random variable.\\
(iii) Let $h_1$ be a nonnegative measurable function. If $\log^+ h_1\in L^1$ then $h_1$ is tempered.
\end{lemma}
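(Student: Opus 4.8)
The plan is to reduce all three parts to the equivalent sub-exponential formulation of temperedness recalled just after \eqref{tempered}: $\rho\ge 0$ is tempered if and only if $e^{-c|t|}\rho(\theta_t x)\to 0$ almost surely for every $c>0$, equivalently $\tfrac1t\log^+\rho(\theta_t x)\to 0$ as $t\to\pm\infty$. The only elementary facts needed are the two pointwise bounds $\log^+(ab)\le \log^+a+\log^+b$ and $\log^+(a+b)\le \log 2+\log^+a+\log^+b$, valid for all $a,b\ge 0$. Since $\log^+\ge 0$, the quantity $\tfrac1t\log^+\rho(\theta_t x)$ is automatically $\ge 0$ for $t>0$ and $\le 0$ for $t<0$, so in each case it suffices to produce a one-sided bound that tends to $0$ and invoke the squeeze theorem. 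With this in hand parts (i) and (ii) are immediate, and the only substantial ingredient is the Birkhoff ergodic theorem, which enters in (iii).

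For (i), put $g_i:=\log^+h_i$, so that $\tfrac1t g_i(\theta_t x)\to 0$ a.s.\ as $t\to\pm\infty$ by hypothesis. The product bound gives $\log^+\bigl((h_1h_2)(\theta_t x)\bigr)\le g_1(\theta_t x)+g_2(\theta_t x)$ and the sum bound gives $\log^+\bigl((h_1+h_2)(\theta_t x)\bigr)\le \log 2+g_1(\theta_t x)+g_2(\theta_t x)$; dividing by $t$ and letting $t\to\pm\infty$, the right-hand sides tend to $0$, and the sign remark above closes the argument. For (ii), monotonicity of $\log^+$ yields $0\le \log^+h_2(\theta_t x)\le\log^+h_1(\theta_t x)$ a.s., and the same squeeze argument applies.

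For (iii), set $g:=\log^+h_1$, which is non-negative and in $L^1$ by hypothesis; I would first treat integer times $n\in\N$. By the Birkhoff ergodic theorem (using ergodicity of $\theta$), both $\tfrac{1}{n+1}\sum_{k=0}^{n}g(\theta_k x)\to\E g$ and $\tfrac1n\sum_{k=0}^{n-1}g(\theta_k x)\to\E g$ almost surely; combining these with the telescoping identity
\[
\frac1n g(\theta_n x)=\frac{n+1}{n}\cdot\frac{1}{n+1}\sum_{k=0}^{n}g(\theta_k x)-\frac1n\sum_{k=0}^{n-1}g(\theta_k x)
\]
gives $\tfrac1n g(\theta_n x)\to 0$ a.s., and applying the same reasoning to $\theta^{-1}$ gives $\tfrac1n g(\theta_{-n}x)\to 0$ a.s. When the time set is $\R$, I would reduce to this by dominating $g(\theta_t x)$ for $t\in[n,n+1]$ by $G(\theta_n x)$, where $G:=\sup_{0\le s\le 1}g(\theta_s x)\in L^1$ (the standard reduction, as in \cite[Proposition~4.1.3]{arnold}), and then running the integer argument for $G$. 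Either way $\tfrac1t\log^+h_1(\theta_t x)\to 0$ as $t\to\pm\infty$, so $h_1$ is tempered.

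I expect (iii) to be the only genuine obstacle. The single nontrivial point is converting the Cesàro/ergodic averages supplied by Birkhoff into decay of the isolated term $g(\theta_n x)$, which is exactly what the displayed telescoping identity achieves, together with — if the time set is $\R$ — the routine control of $g$ over unit time windows needed to pass from the discrete skeleton to all real $t$. Parts (i) and (ii) are pure bookkeeping with the two $\log^+$ inequalities.
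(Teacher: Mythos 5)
Your parts (i) and (ii), and the discrete-time half of (iii), are correct, and they are more self-contained than the paper's own proof, which simply cites Arnold \cite{arnold} (Lemma 4.1.2 for (i), Proposition 4.1.3 for (iii)) and calls (ii) immediate. The two $\log^+$ inequalities together with your one-sided squeeze settle (i) and (ii), and the Birkhoff-plus-telescoping identity correctly gives $\frac1n\log^+h_1(\theta_nx)\to0$ a.s.\ along integers (ergodicity is not even essential there: Birkhoff's limit may be a conditional expectation, and the telescoping still produces $0$). Along the discrete skeleton $\theta_{nr}$ this is all the paper ever uses of (iii), e.g.\ in the proof of Theorem~\ref{attractor} and in Lemma~\ref{lem.tempered2}.

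The genuine gap is in your continuous-time reduction in (iii): you assert $G(x):=\sup_{0\le s\le1}\log^+h_1(\theta_sx)\in L^1$ as though it followed from $\log^+h_1\in L^1$. It does not, and without it the continuous-time conclusion can actually fail. Take $\Omega=[0,1)$ with Lebesgue measure, the ergodic flow $\theta_tx=x+t \bmod 1$, and $h_1(x)=\exp(x^{-1/2})$ for $x\in(0,1)$, $h_1(0)=1$. Then $\log^+h_1\in L^1$, but for every $x$ and every $n$ the set $\{\theta_tx:\ t\in[n,n+1)\}$ sweeps all of $[0,1)$, so $\sup_{t\in[n,n+1)}\log^+h_1(\theta_tx)=\infty$; hence $\limsup_{t\to\infty}\frac1t\log^+h_1(\theta_tx)=\infty$ and also $G\equiv\infty\notin L^1$. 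This is precisely why the continuous-time case of Arnold's Proposition 4.1.3 is formulated under the stronger hypothesis that the unit-window supremum of $\log^+h_1(\theta_\cdot x)$ is integrable, so the reduction you call standard cannot be automatic: either add that hypothesis when the time set is $\R$, or state and use (iii) along discrete times only, which suffices for every application in this paper. (A minor related caveat: in (ii) the a.s.\ inequality $h_2\le h_1$ transfers to $h_2(\theta_tx)\le h_1(\theta_tx)$ only for countably many $t$ at a time, which again covers the discrete-time use; over all real $t$ one needs a pointwise inequality or an extra argument, though here this matches the paper's own level of detail.)
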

\begin{proof}
$(i)$ See \cite[Lemma 4.1.2, p.~164]{arnold}.

$(ii)$ Immediate from the definition of tempered random variable, formula \eqref{tempered}.

$(iii)$ See \cite[Proposition 4.1.3, p.~165]{arnold}.
\end{proof}

\begin{lemma}\label{lem.tempered2}
$(i)$ Let $a : \Omega \to [0,\infty)$ be a random variable, $\log(1+a(\cdot))\in L^1$ and $\hat{a} :=  E\log(1+a(\cdot)) = \int_\Omega \log(1+a(\cdot)) d\mP$. Let $\lambda > \hat{a}$ be an arbitrary fixed positive number. Put
\[
b(x) := \sum_{k=1}^\infty e^{-\lambda k} \prod_{i=0}^{k-1} (1+ a(\theta_{-i} x)).
\]
Then $b(\cdot)$ is a nonnegative almost everywhere finite and tempered random variable.\\
$(ii)$ Let $c : \Omega \to [0,\infty)$ be a tempered random variable, and $\delta >0$ be an arbitrary fixed positive number. Put
\[
d(x) := \sum_{k=1}^\infty e^{-\delta k}  c(\theta_{-k} x).
\]
Then $d(\cdot)$ is a nonnegative almost everywhere finite and tempered random variable.
\end{lemma}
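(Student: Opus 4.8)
The plan is to prove both parts with the same template: rewrite $b(\theta_n x)$ (resp.\ $d(\theta_n x)$) as a geometrically weighted sum of values of a cocycle along the orbit of $x$, dominate that sum by a \emph{sub-exponential envelope} $C(x)e^{2\eps|n|}$ valid for every small $\eps>0$, and then read off both conclusions from it — finiteness by taking $n=0$, and temperedness by letting $|n|\to\infty$ and then $\eps\downarrow 0$. Throughout I work with the $\Z$-action generated by the time-one shift (which is what the integer shifts $\theta_{-i},\theta_{-k}$ in the statement refer to), using that $\mathbb{P}$ is $\theta$-invariant and ergodic so that Birkhoff's theorem applies in both time directions. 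Measurability of $b$ and $d$ is immediate, since each is a pointwise limit of finite sums of measurable functions.

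For $(i)$, set $L(x):=\log(1+a(x))\ge 0$, so $L\in L^1$ with $\int L\,d\mathbb{P}=\hat a$, and let $\Sigma_m$ be the associated additive cocycle ($\Sigma_m(x)=\sum_{j=0}^{m-1}L(\theta_j x)$ for $m\ge 0$, extended to $m<0$ by the cocycle relation). A direct reindexing gives $\prod_{i=0}^{k-1}(1+a(\theta_{n-i}x))=\exp\bigl(\Sigma_{n+1}(x)-\Sigma_{n+1-k}(x)\bigr)$, hence
$$
b(\theta_n x)=e^{\Sigma_{n+1}(x)}\sum_{k\ge 1}e^{-\lambda k-\Sigma_{n+1-k}(x)}.
$$
Fix $\eps\in(0,\lambda-\hat a)$. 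By Birkhoff's theorem, $\Sigma_m(x)/m\to\hat a$ a.s.\ as $m\to\pm\infty$, so there is an a.s.\ finite random variable $R_\eps(x)$ with $|\Sigma_m(x)-\hat a m|\le \eps|m|+R_\eps(x)$ for \emph{all} $m\in\Z$. Substituting this bound and using $|n+1-k|\le|n+1|+k$, the exponent of the $k$-th summand is at most $-(\lambda-\hat a-\eps)k+2\eps|n+1|+2R_\eps(x)$, and summing the resulting geometric series — which converges precisely because $\lambda-\hat a-\eps>0$ — yields
$$
b(\theta_n x)\le \frac{e^{2R_\eps(x)}}{e^{\lambda-\hat a-\eps}-1}\,e^{2\eps|n+1|},\qquad n\in\Z.
$$
Taking $n=0$ shows $b<\infty$ a.s.; dividing $\log^+ b(\theta_n x)$ by $|n|$ and letting $|n|\to\infty$ gives $\limsup_{|n|\to\infty}\tfrac1{|n|}\log^+ b(\theta_n x)\le 2\eps$ a.s., and intersecting the exceptional sets over a sequence $\eps\downarrow 0$ proves that $b$ is tempered.

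For $(ii)$, temperedness of $c$ means that for each fixed $\eps>0$ the supremum $C_\eps(x):=\sup_{m\in\Z}e^{-\eps|m|}c(\theta_m x)$ is a.s.\ finite (the sequence $e^{-\eps|m|}c(\theta_m x)$ tends to $0$ as $m\to\pm\infty$ and all its terms are finite a.s.), so $c(\theta_m x)\le C_\eps(x)e^{\eps|m|}$ for all $m$. Choosing $\eps<\delta$ and splitting $d(\theta_n x)=\sum_{k\ge1}e^{-\delta k}c(\theta_{n-k}x)$ according to the sign of $n-k$, one bounds each part by a convergent geometric series to get $d(\theta_n x)\le C'_\eps(x)e^{\eps|n|}$ with $C'_\eps(x)<\infty$ depending only on $\eps$ and $x$; again $n=0$ gives finiteness and $|n|\to\infty$ followed by $\eps\downarrow 0$ gives temperedness.

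The only genuinely delicate point is the two-sided Birkhoff envelope in $(i)$: one must control $\Sigma_m(x)$ uniformly over all $m\in\Z$ (not merely as $m\to+\infty$), and then verify that the stray factor $e^{\eps|n+1-k|}$ coming from the backward indices $n+1-k$ can be absorbed into the geometric weight $e^{-\lambda k}$ — which works exactly when $\eps<\lambda-\hat a$, and this is where the hypothesis $\lambda>\hat a$ is essential and cannot be dropped. Part $(ii)$ is the same computation with $\hat a$ replaced by $0$ and $a$ replaced by a tempered $c$, and is correspondingly routine.
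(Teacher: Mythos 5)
Your proposal is correct, and it proves both parts, but the way you obtain temperedness differs from the paper's route in an essential step. For finiteness both arguments use Birkhoff's theorem along the backward orbit to dominate the series by a convergent geometric one, so there you coincide in spirit. For temperedness, however, the paper only establishes the crude one-sided shift inequality $b(\theta_{-m}x)\le e^{\lambda m}b(x)$ (respectively $d(\theta_{-m}x)\le e^{\delta m}d(x)$), which merely shows that $\limsup_{m\to\infty}\frac1m\log^+b(\theta_{-m}x)\le\lambda<\infty$; it then invokes the dichotomy for stationary sequences (Arnold, Proposition 4.1.3(i), together with O'Brien's results) asserting that this growth rate is almost surely either $0$ or $+\infty$, and concludes it must be $0$ in both time directions. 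You instead bypass that external dichotomy: the two-sided Birkhoff envelope $|\Sigma_m(x)-\hat a m|\le\eps|m|+R_\eps(x)$ for all $m\in\Z$ yields the explicit bound $b(\theta_n x)\le C_\eps(x)e^{2\eps|n+1|}$ for every $\eps\in(0,\lambda-\hat a)$, from which the limsup is at most $2\eps$ in both directions, and letting $\eps\downarrow0$ along a sequence gives temperedness directly (and likewise for $d$ with the sub-exponential envelope of the tempered $c$). What your approach buys is a self-contained, quantitative argument with a uniform two-sided envelope and no appeal to the stationarity dichotomy; what it costs is the extra bookkeeping of the cocycle $\Sigma_m$ for negative $m$ and the backward Birkhoff theorem (available here since $\theta$ is an invertible ergodic metric dynamical system, exactly as assumed in the paper's appendix). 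The paper's version is computationally shorter but leans on the cited zero--infinity dichotomy to upgrade ``finite growth rate'' to ``zero growth rate''. Your restriction to the integer shifts and your measurability remarks ($R_\eps$ and $C_\eps$ as countable suprema) are consistent with the paper's own level of treatment, so I see no gap.
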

\begin{proof}
$(i)$ Put $b_n(x) := \sum_{k=1}^n e^{-\lambda k} \prod_{i=0}^{k-1} (1+ a(\theta_{-i} x))$.
Then $b_n(\cdot)$, $n\in \N$, is an increasing sequence of nonnegative random variable, hence converges to the nonnegative random variable $b(\cdot)$. Since $\log(1+a(\cdot))\in L^1$, by Birkhoff ergodic theorem there exists a $\theta$-invariant set $\Omega'\subset\Omega$ of full measure such that for all $x\in\Omega'$ we have
$\lim_{n\to\pm\infty} (\sum_{i=0}^{n-1}\log(1+ a(\theta_{-i} x))/n = \hat{a}$. Hence given any fixed $\delta>0$ for all $n$ big enough, 
$\prod_{i=0}^{n-1} (1+ a(\theta_{-i} x)) < \exp(\hat{a} +\delta)n$. Consequently, since $\lambda > \hat{a}$ the sequence  $b_n(\cdot)$, $n\in \N$ tends to  limit $b(\cdot)$, which is finite almost surely.

Now we show that $b(\cdot)$ is tempered. For $m\in\N$ and $x\in\Omega'$ we obtain
\begin{eqnarray*}
b(\theta_{-m} x) &=&
  \sum_{k=1}^\infty e^{-\lambda k} \prod_{i=0}^{k-1} (1+ a(\theta_{-i} \theta_{-m} x))
=  \sum_{k=1}^\infty e^{-\lambda k} \prod_{j=m}^{k+m-1} (1+ a(\theta_{-j} x))\\
&\leq & e^{\lambda m} \sum_{l=1+m}^\infty e^{-\lambda l} \prod_{j=0}^{l-1} (1+ a(\theta_{-j} x))
\leq  e^{\lambda m} \sum_{l=1}^\infty e^{-\lambda l} \prod_{j=0}^{l-1} (1+ a(\theta_{-j} x))
= e^{\lambda m} b(x).
\end{eqnarray*}
This implies that $\limsup_{m\to \infty} \frac{1}{m} \log^+ b(\theta_{-m} x) \leq  \lambda$.
 By virtue of
\cite[Proposition 4.1.3(i), p.~165]{arnold} and \cite[Lemma 4, Corollary 4]{Obrien82}, for all $x\in\Omega'$ we have 
\[
\limsup_{m\to \infty} \frac{1}{m} \log^+ b(\theta_{-m} x) = \limsup_{m\to -\infty} \frac{1}{-m} \log^+ b(\theta_{-m} x)= 0,
\] 
which proves that $b(\cdot)$ is tempered.

$(ii)$ Put $d_n(x) := \sum_{k=1}^n e^{-\delta k} c(\theta_{-k} x)$.
Then $d_n(\cdot)$, $n\in \N$, is an increasing sequence of nonnegative random variable, hence converges to the nonnegative random variable $d(\cdot)$.
By temperedness of  $c(\cdot)$ we can find a measurable set $\tilde\Omega\subset\Omega$ of full measure such that for all $x\in\tilde\Omega$ there exists  $n_0(x) >0$ such that for all $n\geq n_0(x)$ we have $c(\theta_{-n} x) \leq e^{n\delta/2}$. Hence $d_n(x)$, $n\in \N$, is an increasing sequence of positive numbers tending to finite value $d(x)$. Thus $d(\cdot)$ is finite almost everywhere. Furthermore,
for $m\in\N$ and $x\in\tilde\Omega$,
\[
d(\theta_{-m} x) =
  \sum_{k=1}^\infty e^{-\delta k} c(\theta_{-k} \theta_{-m} x)
=  \sum_{l=m+1}^\infty e^{-\delta (l-m)}  a(\theta_{-l} x)\\
\leq  e^{\delta m} \sum_{l=1}^\infty e^{-\delta l} c(\theta_{-l} x)
= e^{\delta m} d(x).
\]
This implies that $\limsup_{m\to \infty} \frac{1}{m} \log^+ d(\theta_{-m} x) \leq  \delta$.
Similar to (i) above, $d(\cdot)$ is tempered.
\end{proof}

\subsection*{Gronwall lemma}
\begin{lemma}[Continuous Gronwall Lemma]\label{contgronwall}
Let $[t_0,T]$ be an interval on $\R$.
	Assume that $u(\cdot), a(\cdot): [t_0,T] \rightarrow \R^+$ are positive continuous functions and $\beta >0$ is a positive number, such that
	\[
	u(t) \leq a(t) + \int_{t_0}^t \beta u(s) ds, \quad \forall t \in [t_0,T].
	\]
	Then the following inequality holds
	\[
	u(t) \leq a(t) + \int_{t_0}^t a(s) \beta e^{\beta (t-s)} ds, \quad \forall t \in [t_0,T].
	\]
\end{lemma}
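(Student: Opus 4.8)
The plan is to reduce the integral inequality to a linear differential inequality for the accumulated term and then solve the latter with an integrating factor. First I would set $v(t) := \int_{t_0}^t \beta u(s)\,ds$, so that $v(t_0) = 0$ and, since $u$ is continuous, $v \in C^1([t_0,T])$ with $v'(t) = \beta u(t)$. The hypothesis reads $u(t) \leq a(t) + v(t)$, and therefore $v'(t) = \beta u(t) \leq \beta a(t) + \beta v(t)$ for all $t \in [t_0,T]$.

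Next I would multiply this differential inequality by the integrating factor $e^{-\beta t}$, obtaining $\frac{d}{dt}\bigl(e^{-\beta t} v(t)\bigr) = e^{-\beta t}\bigl(v'(t) - \beta v(t)\bigr) \leq \beta a(t) e^{-\beta t}$ on $[t_0,T]$. Integrating from $t_0$ to $t$ and using $v(t_0)=0$ gives $e^{-\beta t} v(t) \leq \int_{t_0}^t \beta a(s) e^{-\beta s}\,ds$, hence $v(t) \leq \int_{t_0}^t \beta a(s) e^{\beta (t-s)}\,ds$. Substituting back into $u(t) \leq a(t) + v(t)$ yields exactly the asserted bound $u(t) \leq a(t) + \int_{t_0}^t a(s)\beta e^{\beta (t-s)}\,ds$.

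There is no real obstacle here: the only point needing a word of care is that $v$ is genuinely $C^1$, so that the fundamental theorem of calculus applies to $t \mapsto e^{-\beta t} v(t)$, and this follows at once from the continuity of $u$; the positivity of $a$ and $u$ is not actually used, only their continuity together with $\beta>0$. An alternative, differentiation-free route would be to iterate the hypothesis, substituting the bound for $u$ into itself repeatedly to generate the partial sums of $\sum_{k\geq 1}\beta^k \int \cdots$, whose limit reconstructs the exponential kernel; but the integrating-factor argument is shorter and cleaner, so I would present that one.
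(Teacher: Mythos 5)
Your argument is correct and complete: setting $v(t)=\int_{t_0}^t\beta u(s)\,ds$, noting $v\in C^1$ with $v'=\beta u\leq \beta a+\beta v$, and integrating $\frac{d}{dt}\bigl(e^{-\beta t}v(t)\bigr)\leq \beta a(t)e^{-\beta t}$ from $t_0$ to $t$ gives exactly $v(t)\leq\int_{t_0}^t \beta a(s)e^{\beta(t-s)}\,ds$, whence the stated bound. The paper does not prove this lemma itself but merely cites Amann's textbook; your integrating-factor proof is the standard argument behind that reference (and your side remark is accurate: positivity of $u$ and $a$ is not needed, only continuity), so nothing is missing.
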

\begin{proof}
	See \cite[Lemma 6.1, p\ 89]{amann}.
\end{proof}	
\begin{lemma}[Discrete Gronwall Lemma]\label{gronwall}
	Let $a$ be a non negative constant and $u_n, \alpha_n,\beta_n$ be  nonnegative sequences satisfying for all
	$n\in\N$, $n\geq 0$, the equalities 
	\[
	u_n\leq a + \sum_{k=0}^{n-1} \alpha_ku_k + \sum_{k=0}^{n-1} \beta_k.
	\]
	Then for all $n\in\N$, $n\geq 1$, the following inequalities hold
	\begin{equation}\label{estu}
	u_n\leq \max\{a,u_0\}\prod_{k=0}^{n-1} (1+\alpha_k) + \sum_{k=0}^{n-1}\beta_k\prod_{j=k+1}^{n-1}(1+\alpha_j).
	\end{equation}
\end{lemma}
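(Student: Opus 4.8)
The plan is to reduce the coupled inequality to a simple one-step linear recursion and then solve that recursion in closed form. First I would introduce the partial-sum quantity
\[
S_n := a + \sum_{k=0}^{n-1}\alpha_k u_k + \sum_{k=0}^{n-1}\beta_k,
\]
so that the hypothesis reads exactly $u_n \leq S_n$ for every $n\geq 0$ (with the usual convention that empty sums vanish, whence $S_0 = a$ and in particular $u_0\leq a$). The advantage of $S_n$ over $u_n$ is that its increments are explicit: $S_{n+1}-S_n = \alpha_n u_n + \beta_n$, and substituting the bound $u_n\leq S_n$ together with $\alpha_n\geq 0$ yields the one-step recursion
\[
S_{n+1} \leq (1+\alpha_n) S_n + \beta_n.
\]
This is the key step: it converts a bound depending on the whole history $u_0,\dots,u_{n-1}$ into a genuine first-order recursion.

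Next I would define a comparison sequence $v_n$ by $v_0 := \max\{a,u_0\}$ and $v_{n+1} := (1+\alpha_n)v_n + \beta_n$, and prove $S_n\leq v_n$ for all $n$ by induction. The base case is $S_0 = a \leq \max\{a,u_0\} = v_0$; for the inductive step, if $S_n\leq v_n$ then the recursion above and $\alpha_n\geq 0$ give $S_{n+1}\leq (1+\alpha_n)S_n + \beta_n \leq (1+\alpha_n)v_n + \beta_n = v_{n+1}$. Combining with $u_n\leq S_n$ yields $u_n\leq v_n$ for every $n\geq 0$.

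It then remains to solve the linear recursion for $v_n$ explicitly. Unrolling it (or a second short induction on $n$) gives
\[
v_n = v_0 \prod_{k=0}^{n-1}(1+\alpha_k) + \sum_{k=0}^{n-1}\beta_k \prod_{j=k+1}^{n-1}(1+\alpha_j),
\]
where the empty product $\prod_{j=n}^{n-1}$ equals $1$; recalling $v_0=\max\{a,u_0\}$ and using $u_n\leq v_n$ yields exactly \eqref{estu}.

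I expect no genuine obstacle here, as the result is elementary. The only points requiring care are the bookkeeping of the empty-sum and empty-product conventions (so that the $n=1$ case and the coefficient of $\beta_k$ at $k=n-1$ come out correctly), and the observation that one cannot induct on $u_n$ directly---since the hypothesis couples $u_n$ to the entire history $u_0,\dots,u_{n-1}$---so that passing to the monotone partial sums $S_n$ is precisely what makes the first-order comparison legitimate.
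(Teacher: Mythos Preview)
Your argument is correct. Introducing the partial sums $S_n$, deriving the one-step recursion $S_{n+1}\le (1+\alpha_n)S_n+\beta_n$, comparing with the explicitly solvable sequence $v_n$, and unrolling is the standard route to this discrete Gronwall inequality, and your bookkeeping of empty sums and products is sound. (Incidentally, since the hypothesis at $n=0$ already forces $u_0\le a$, one has $\max\{a,u_0\}=a$; your choice $v_0=\max\{a,u_0\}$ is harmless and matches the stated conclusion.)

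As for comparison with the paper: the paper does not actually prove this lemma but simply cites \cite{duchong19}. Your self-contained argument is therefore more than adequate, and there is nothing further to compare.
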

\begin{proof} See \cite{duchong19}.
\end{proof}
\section*{Acknowledgments}
This work is supported by Vietnam National Foundation for Science and Technology Development (NAFOSTED) under grant number  101.03-2019.310. P.T. Hong would like to thank the IMU Breakout Graduate Fellowship Program for the financial support.

\end{document}